\numberwithin{equation}{section}
\newtheorem{Theorem}{Theorem}[section]
\newtheorem{Lemma}[Theorem]{Lemma}
\newtheorem{Proposition}[Theorem]{Proposition}
\newtheorem{Definition}[Theorem]{Definition}
\newtheorem{Corollary}[Theorem]{Corollary}
\newtheorem{Algorithm}{Algorithm}
\newtheorem{Remark}{Remark}
\newtheorem{Assumption}{Assumption}
 \def\p{\partial} \def\nb{\nonumber}
\def \Vh0{\stackrel{\circ}{V}_h} \def\to{\rightarrow}
\def\Om{\Omega}  \def\om{\omega} 
\newcommand{\q}{\quad}
\def\l{\label}  \def\f{\frac}  \def\fa{\forall}
\def\b{\beta}  \def\a{\alpha} 
\def\eps{\varepsilon}
 \def\t{\times}  
\def\ms{\medskip}  
\def\p{\partial}
  \def\x{{\bf x}} 
\def\y{{\bf y}}
\def\cA{\mathcal{A}}
\def\cB{\mathcal{B}}
\def\cE{\mathcal{E}}
\def\cF{\mathcal{F}}
\def\cG{\mathcal{G}}
\def\cH{\mathcal{H}}
\def\cI{\mathcal{I}}
\def\cK{\mathcal{K}}
\def\cL{\mathcal{L}}
\def\cM{\mathcal{M}}
\def\cQ{\mathcal{Q}}
\def\cT{\mathcal{T}}
\def\cU{\mathcal{U}}
\def\cX{\mathcal{X}}
\def\cY{\mathcal{Y}}
\def\bA{{\textbf{A}}}
\def\N{{\mathbb{N}}}
\def\bP{\mathbb{P}}
\def\R{{\mathbb R}}
\def\Z{{\mathbb{Z}}}
\newcommand{\ex}{\mathbb{E}}
\newcommand{\tr}{\textnormal{tr}}
\DeclareMathOperator*{\argmin}{arg\,min}
\newcommand{\lc}
{\mathrel{\raise2pt\hbox{${\mathop<\limits_{\raise1pt\hbox
{\mbox{$\sim$}}}}$}}}
\newcommand{\gc}
{\mathrel{\raise2pt\hbox{${\mathop>\limits_{\raise1pt\hbox{\mbox{$\sim$}}}}$}}}
\newcommand{\ec}
{\mathrel{\raise2pt\hbox{${\mathop=\limits_{\raise1pt\hbox{\mbox{$\sim$}}}}$}}}
\def\bb{\begin{equation}} \def\ee{\end{equation}}
\def\bbn{\begin{equation*}} \def\een{\end{equation*}}
\def\beqn{\begin{eqnarray}}  \def\eqn{\end{eqnarray}}
\def\beqnx{\begin{eqnarray*}} \def\eqnx{\end{eqnarray*}}
\def\bn{\begin{enumerate}} \def\en{\end{enumerate}}
\def\bd{\begin{description}} \def\ed{\end{description}}
\newenvironment{tablehere}
  {\def\@captype{table}}
  {}
\newenvironment{figurehere}
  {\def\@captype{figure}}
  {}
\begin{document}

\title{A penalty scheme and policy iteration for nonlocal HJB variational inequalities with monotone drivers}
\author{
Christoph Reisinger\thanks{Mathematical Institute, University of Oxford, United Kingdom ({\tt christoph.reisinger@maths.ox.ac.uk, yufei.zhang@maths.ox.ac.uk})}
\and
Yufei Zhang\footnotemark[2]
}
\date{}

\maketitle


\noindent\textbf{Abstract.} 
We propose a class of numerical schemes for 
nonlocal HJB variational inequalities (HJBVIs) with monotone drivers.
The solution and free boundary of the HJBVI are constructed from a sequence of penalized equations, for which a continuous dependence result is derived and  the penalization error is estimated. 
The penalized equation is then discretized by a class of semi-implicit monotone approximations.
We  present a novel analysis technique for the well-posedness of the discrete equation, and demonstrate the convergence of the scheme, which subsequently gives a constructive proof for the existence of a solution to the penalized equation and variational inequality. We further propose an efficient iterative algorithm with local superlinear convergence for solving the discrete equation. 
 Numerical experiments are presented for an optimal investment problem under ambiguity and a recursive consumption-portfolio allocation problem.

\medskip
\noindent
\textbf{Key words.} HJB variational inequalities, monotone drivers, penalization, semi-smooth Newton methods, optimal investment

\ms
\noindent
\textbf{AMS subject classifications.} 65M06, 65M12, 62L15, 93E20, 91G80

\medskip

\section{Introduction}
In this article, we consider a nonlocal Hamilton-Jacobi-Bellman variational inequality (HJBVI) of the following form: 
\begin{align}\l{eq:hjbvi}
0&=F(t,x,u,Du,D^2u,\{K^{\a} u\}_{\a\in\bA},\{B^{\a} u\}_{\a\in\bA})\\
&=\begin{cases}
\min\big\{u-\zeta, u_t+\inf_{\a\in \bA}\big(-L^{\a} u-f^{\a}(t,x,u,(\sigma^{\a})^T Du,B^{\a} u)\big)\big\},  & (t,x)\in \cQ_T ,\nb\\
u(0,x)-g(x), & x\in\R^d,\nb
\end{cases}
\end{align}
with the operators $L^{\a} \coloneqq A^{\a} +K^{\a} $ and $B^{\a}$ satisfying for $\phi\in C^{1,2}(\bar{\cQ}_T)$ that
\begin{align}
A^{\a} \phi(t,x)&=\f{1}{2} \tr(\sigma^{\a} (t,x)(\sigma^{\a} (t,x))^TD^2\phi(t,x))+b^{\a} (t,x) \cdot D\phi,\l{eq:a}\\
K^{\a} \phi(t,x)&=\int_{E}\big(\phi(t,x+\eta^{\a} (t,x,e))-\phi(t,x)-\eta^{\a} (t,x,e)\cdot D\phi(t,x)\big)\,\nu(de),\l{eq:k}\\
B^{\a} \phi(t,x)&=\int_{E}m\big(\phi(t,x+\eta^{\a} (t,x,e))-\phi(t,x)\big)\gamma(t,x,e)\,\nu(de),\l{eq:b}
\end{align}
where we denote $\cQ_T= (0,T]\t \R^d$ and $E=\R^n\setminus\{0\}$.
The  function $f$, called the driver of \eqref{eq:hjbvi}, is monotone, possibly non-Fr\'{e}chet-differentiable, and of arbitrary growth in its third component (precise conditions will be specified later).

Such equations  extend the classical HJBVIs with linear drivers, i.e., $f(\a,t,x,y,z,k)\equiv \ell(\a,t,x)-r(t,x)y$, and play an important role in modern  finance, including the following: 
models for American options in a market with constrained portfolios \cite{karoui1994,karoui1997},  recursive utility optimization problems \cite{epstein1992}, and robust pricing and risk measures under probability model uncertainty \cite{roger2006,quenez2013}. We remark that imposing merely monotonicity assumptions on the drivers allows us to consider several important non-smooth drivers stemming from robust pricing \cite{roger2006, karoui2009, quenez2013} and non-Lipschitz drivers arising in stochastic recursive control \cite{kraft2013,pu2017}, while including an extra nonlinearity in the operator $B^\a$ enables us to incorporate ambiguity in the jump processes \cite{roger2006}. 
As  the solution to \eqref{eq:hjbvi} is in general not known analytically, it is important to construct effective and robust numerical schemes for solving these fully nonlinear equations.

To the best of our knowledge,  there is no published numerical scheme covering the generality of \eqref{eq:hjbvi}. However, there is a vast literature on monotone approximations for local HJB equations (e.g., \cite{crandall1984,barles1991,kushner1992,debrabant2012}) and on monotone finite-difference quadrature schemes for nonlocal HJB equations (e.g., \cite{biswas2010diff,biswas2017}). 
For works covering specific extensions, we refer the reader to \cite{hintermuller2002,
jakobsen2003,ito2006,huang2012,howison2013} for penalty approximations to  variational inequalities, to \cite{bokanowski2009, witte2011} for an application of policy iteration together with penalization to solve HJB obstacle problems with linear drivers, to \cite{dumitrescu2018} for schemes to HJB obstacle problems with Lipschitz drivers based on piecewise constant  policy time stepping, and to \cite{xu2017} for applying policy iteration to solve (finite-dimensional) static HJB equations with Fr\'{e}chet-differentiable concave drivers and finite control sets. 

In this paper, we shall construct a class of monotone schemes for solving \eqref{eq:hjbvi} with a monotone (possibly non-Fr\'{e}chet-differentiable) driver and a compact  set of controls. 
Note that monotonicity of the scheme is crucial, since it is well-known that non-monotone schemes may fail to converge or even converge to false ``solutions'' \cite{debrabant2012}. By Godunov's Theorem \cite{godunov1959}, in general, one can expect a monotone scheme to be at most first-order accurate.

We emphasize that 
the non-Lipschitz setting of the drivers prevents us from 
adopting the standard Banach fixed-point arguments (see e.g.~\cite{dumitrescu2018}) to establish  the well-posedness and stability of the discrete approximations of \eqref{eq:hjbvi}, and hence new analysis techniques are required.
Moreover, although in practice one can obtain a finite-dimensional equation by localizing the scheme on a bounded  domain, it is  important to analyze the discrete equation in an infinite-dimensional setting, since numerical solutions will behave  asymptotically similar to that from the infinite-dimensional equation as one refines the mesh or enlarges the  domain.
The  non-differentiablility of the driver in $y$ and its nonlinear dependence on $z$ and $k$ also introduce substantial difficulties in designing efficient iterative algorithms for solving the discrete equations.

The main contributions of this work are as follows:
\begin{itemize}
\item We formulate a penalty approximation to \eqref{eq:hjbvi} with a  monotone driver, and establish a continuous dependence estimate for the penalized equation, independent of the penalty parameter. We shall demonstrate that as the penalty parameter tends to infinity, the solution of the penalized equation converges to the solution of \eqref{eq:hjbvi} monotonically from below, at a  rate depending explicitly on the regularity of the obstacle, 
which extends the results in \cite{jakobsen2003} to nonlocal equations with monotone drivers and time-dependent obstacles. 
These convergence results further lead us to a convergent approximation of the free boundary of \eqref{eq:hjbvi}, which to our best knowledge is new, even in the classical cases with linear drivers.

\item We propose a class of semi-implicit monotone approximations to the penalized equations, 
which enjoy a stablity condition independent of the penalty parameter. 
We further present a novel analysis technique for the well-posedness of  the resulting (infinite-dimensional) discrete equation by constructing Lipschitz approximations of the monotone driver via smoothing and truncation.
The convergence of the scheme is demonstrated, which subsequently gives a constructive proof for the existence of a  bounded viscosity solution to the penalized equations and the HJBVI \eqref{eq:hjbvi}.

\item 
For practical implementations, we propose an efficient iterative algorithm for a localized 
discrete equation with a slantly differentiable driver, 
and demonstrate the local superlinear convergence for the value functions, which extends the results obtained in the cases with linear drivers (see \cite{bokanowski2009, witte2011}) or with Fr\'{e}chet-differentiable concave drivers and finite control sets \cite{xu2017}.
A novel convergence result of control strategies in the Hausdorff metric is established.
We further estimate the control discretization error caused by numerical approximations of the continuous controls, e.g.~by piecewise linearization.

\item  Numerical examples for an optimal investment  problem for jump-diffusion models under ambiguity and a recursive consumption-portfolio allocation problem
with stochastic volatility models
are included to investigate the convergence order of the scheme with respect to different discretization parameters.
\end{itemize}

We organize this paper as follows.  Section \ref{sec:assumption}
gives  standard definitions and assumptions on the HJBVI \eqref{eq:hjbvi}. We shall propose a penalty approximation to the HJBVI in Section \ref{sec:penalty} and study its  convergence properties.
Then we derive a class of fully discrete monotone schemes for the penalized equations in Section  \ref{sec:scheme_construct}, and establish their convergence  in Section  \ref{sec:scheme_analysis}. A Newton-type iterative method with local superlinear convergence is constructed in Section \ref{sec:policy} to solve the resulting discrete equations. 
Numerical examples for an optimal investment   problem  under ambiguity and a recursive consumption-portfolio allocation problem are presented in Section \ref{sec:numerical} to illustrate the effectiveness of our algorithms. 

\section{Main assumptions and preliminaries}\l{sec:assumption} 
In this section, we state our main assumptions on the coefficients of \eqref{eq:hjbvi} and introduce related concepts of solutions.
We start by collecting some useful notation which is needed frequently throughout this work. 

For any given function  $g:\R^{d_1}\mapsto \R^{d_2}$, we define by $g^+\coloneqq \max(g,0)$ and  $g^-\coloneqq \max(-g,0)$ the (component-wise) positive  and  negative part of $g$, respectively. Also 
for any given signed measure $\nu$, we denote by $\nu^+$ and $\nu^-$, respectively,
the positive part and the negative part in the Jordan decomposition of $\nu$, and by $|\nu|=\nu^++\nu^-$ the total variation of $\nu$. Moreover, for any given positive measures $\nu_1$ and $\nu_2$, we define their maximum by 
$$
\nu_1 \vee \nu_2\coloneqq \bigg(\f{d\nu_1}{d(\nu_1+\nu_2)}\vee \f{d\nu_2}{d(\nu_1+\nu_2)}\bigg) (\nu_1+\nu_2),
$$
where the derivatives denote the corresponding Radon-Nikodym derivatives.
Finally, for a function $f:\bar{\cQ}_T\to \R^{d_1\t d_2}$ we define the following (semi-)norms:
$$
|f|_0=\sup_{(t,x)\in \bar{\cQ}_T} |f(t,x)|,\q |f|_{1}=\sup_{t\in [0,T], x,x'\in \R^d,x\not=x'} \f{|f(t,x)-f(t,x')|}{|x-x'|}, \q \|f\|_1=|f|_0+|f|_{1},
$$
which extend naturally to time-independent functions or vectors.

Now we  turn to the  standing assumptions on the coefficients of the HJBVI \eqref{eq:hjbvi}:

\begin{Assumption}\l{assum:mono}
Let $\bA$ be a  compact subset of a metric space. Moreover, there exists  $C>0$ and  $\mu\in \R$ such that  it holds for any $(\a, e,t, x,z,k)\in \bA\t E\t \bar{\cQ}_T\t \R^d\t \R$ and $y,y'\in \R$ that:
\begin{enumerate}[(1)]
\item $b^{\a}, \sigma^{\a},\eta^{\a}$ are  continuous in $\a,t$, and $\gamma,\zeta$ are continuous in $t$, which satisfy the following estimates: $g(x)\ge \zeta(0,x)$, $\gamma(t,x,e)\ge 0$, and 
$$
\|b^{\a}\|_1+\|\sigma^{\a}\|_1+\|\zeta\|_1+\|g\|_1\le C, \q\!\! \|\eta^{\a}(\cdot,\cdot,e)\|_1+|\gamma(\cdot,\cdot,e)|_0\le C(1 \wedge |e|),\q\!\! |\gamma(\cdot,\cdot,e)|_1\le C(1 \wedge |e|^2).
$$
\item   $f:\bA\t \bar{\cQ}_T\t\R\t \R^{d}\t\R\to \R$ is a continuous function satisfying the  properties:
\begin{enumerate}
\item (Boundedness.) $|f^{\a}(t,x,0,0,0)|\le C$.
\item (Monotonicity.) The mapping $y\mapsto f^{\a}(t,x,y,z,k)$ is monotone  in the sense that there exists a continuous increasing function $\varphi:[0,\infty)\to [0,\infty)$ such that
\begin{align}
&(y-y')(f^\a(t,x,y,z,k)-f^\a(t,x,y',z,k))\le \mu |y-y'|^2, \l{eq:y_mono}\\
 &|f^\a(t,x,y,z,k)|\le |f^\a(t,x,0,0,0)|+\varphi(|y|)+C(|z|+|k|) , \l{eq:f_growth}
\end{align}
and $k\mapsto f^{\a}(t,x,y,z,k)$ is non-decreasing.
\item (Lipschitz continuity.) 
$f$ is  Lipschitz continuous in $x, z$ and $k$ with the constant $C$, uniformly in $\a, t$ and $y$.
\end{enumerate}
\item $m:\R\to \R$ is Lipschitz continuous and non-decreasing with $m(0)=0$.
\end{enumerate}
\end{Assumption}

\begin{Remark}\l{remark:strict_monotone}
Although our discussions focus on the cases with coefficients $b^{\a},\sigma^{\a},\zeta,g$ bounded in $x$, similar results and analysis are valid for coefficients with polynomial growth as well. 
Moreover, as pointed out in \cite{jakobsen2005}, there is no loss of generality by assuming  $f$ is strictly monotone in $y$ with $\mu<0$ in \eqref{eq:y_mono}
(this can be seen by carrying out an exponential time scaling of the solution).
%

Finally, we remark that for any  Lipschitz continuous and non-decreasing function $\psi:\R\to \R$, in particular the functions $m$ and $k\mapsto f^{\a}(t,x,u,p,k)$, one has
$\psi(x)-\psi(y)\le C(x-y)^+$ for any $x,y\in \R$, which will be used frequently in our subsequent analysis.
\end{Remark}

We emphasize that Assumption \ref{assum:mono} only requires $f$ to be monotone in $y$ (up to an additive linear function) and allows the coefficients $\sigma^\a$ and $\eta^\a$ to vanish at certain points. Therefore, due to lack of regularization from a Laplacian or fractional Laplacian operator,
the  solutions of \eqref{eq:hjbvi} are typically not smooth, and we shall understand the equation in the viscosity sense \cite{jakobsen2005}:

\begin{Definition}[Viscosity solution]\l{definition:viscosity}
An upper (resp.~lower) semicontinuous function $u$ is said to be a viscosity subsolution (resp.~supersolution) of \eqref{eq:hjbvi} if and only if 
for any point $\x_0=(t_0,x_0)$ and for any $\phi\in C^{1,2}(\bar{\cQ}_T)$ such that 
$\phi(\x_0)=u(\x_0)$ and $u-\phi$ attains its global  maximum (resp.~minimum) at $\x_0$, one has
\begin{align*}
&
F_*(\x_0,u(\x_0),D\phi(\x_0),D^2\phi(\x_0),\{K^{\a} \phi(\x_0)\}_{\a\in\bA},\{B^{\a} \phi(\x_0)\}_{\a\in\bA})\le 0\\
\big(resp. \q &
F^*(\x_0,u(\x_0),D\phi(\x_0),D^2\phi(\x_0),\{K^{\a} \phi(\x_0)\}_{\a\in\bA},\{B^{\a} \phi(\x_0)\}_{\a\in\bA})
\ge 0\big).
\end{align*}
%
%
A continuous function is a viscosity solution of the  HJBVI \eqref{eq:hjbvi} if it is both a  viscosity sub- and supersolution.
\end{Definition}

We will demonstrate that \eqref{eq:hjbvi} admits a unique bounded solution under Assumption \ref{assum:mono}. The uniqueness follows directly from the comparison principle which we establish below (see Remark \ref{remark:comparison}), while a continuous bounded solution can be explicitly constructed through discrete approximations (see Remark \ref{remark:soln_penal}).

\section{Penalty approximations for the HJBVI}\l{sec:penalty}

In this section, we shall propose a penalty approximation for the HJBVI \eqref{eq:hjbvi}, which is an extension of the ideas used for local HJB obstacle problems (with linear drivers) in \cite{jakobsen2003, witte2011} and for American options in \cite{howison2013}. 

For any given parameter $\rho\ge 0$, we shall consider the following penalized problem:
\begin{align}\l{eq:hjb_penal}
0&=F^\rho(\x,u,Du,D^2u,\{K^{\a} u\}_{\a\in\bA},\{B^{\a} u\}_{\a\in\bA})\\
&=\begin{cases}
u^\rho_t+\inf_{\a\in \bA}\big(-L^{\a} u^\rho-f^{\a}(\x,u^\rho,(\sigma^{\a})^T Du^\rho,B^{\a} u^\rho)\big)-\rho(\zeta-u^\rho)^+,  & \x\in \cQ_T ,\nb\\
u^\rho(0,x)-g(x), & x\in\R^d,\nb
\end{cases}
\end{align}
which will be interpreted in the viscosity sense similar to Definition \ref{definition:viscosity} by virtue of the possible  degeneracy of the equation. 

In the following, we shall focus on the uniqueness of viscosity solutions and investigate their dependence on the coefficients. The proof for the existence of  solutions will be deferred to Section \ref{sec:scheme}, where we will construct continuous solutions of \eqref{eq:hjb_penal} through numerical schemes and demonstrate they are bounded independent of the penalty parameter $\rho$ (see Remark \ref{remark:soln_penal}).


The next theorem presents a continuous dependence estimate for the solutions of the penalized equation, which quantifies the stability properties of solutions with respect to the coefficients. As the reader will  see immediately, this estimate not only implies the uniqueness of viscosity solutions, but also enables us to derive the 
convergence rate of the penalty approximation to the  HJBVI \eqref{eq:hjbvi} and construct convergent approximations for the free boundary. 
Moreover, it will be used in Sections \ref{sec:scheme} and \ref{sec:policy} to estimate the discretization errors. 

The proof of this estimate  follows essentially along the lines of the proof of \cite[Theorem~4.1]{jakobsen2005}, with  extra technicalities arising from 
the nonlinearities of $f^\a$ and  $B^\a$.
We  include a detailed proof in Appendix \ref{sec:appendix} for the convenience of the reader.

\begin{Theorem}\l{thm:conts}
Consider \eqref{eq:hjb_penal}  with two sets of coefficients  $\{b^\a_i,\sigma^\a_i, \eta^\a_i,\nu_i, \zeta_i,f_i\}_{i=1,2}$, which satisfy Assumption \ref{assum:mono} with the same  $C$, $\mu$ and $\varphi$. Let  
$u_1$ (resp.~$u_2$) be a bounded subsolution (resp.~supersolution) to  \eqref{eq:hjb_penal} with $i=1$ (resp.~$i=2$), then it holds for any $(t,x)\in \bar{\cQ}_T$ that
\begin{align}
 u_1(t,x)-u_2(t,x)\le &
| ( u_1(0,\cdot)-u_2(0,\cdot))^+|_0+\sup_{\a\in\bA}|f_1^\a(\cdot,\cdot,\cdot,0,0)-f_2^\a(\cdot,\cdot,\cdot,0,0)|_{\bar{\cQ}_T\t 
 [-\varphi(|u_2|_0),\varphi(|u_2|_0)]}\nb\\
&+|\zeta_1-\zeta_2|_0+ C\sup_{\a\in\bA}\bigg(|\sigma_1^\a-\sigma_2^\a|_0^{\f{1}{2}}+|b^\a_1-b^\a_2|_0^\f{1}{2} \l{eq:conts}\\
&+\bigg|\int_{E} |\eta^\a_1-\eta^\a_2|^2\,(\nu_1\vee\nu_2) (de)\bigg|^\f{1}{4}_0+\bigg|\int_{E}\max(|\eta^\a_1|^2, |\eta^\a_2|^2) \,|\nu_1-\nu_2|(de)\bigg|^\f{1}{4}_0\bigg).\nb
\end{align}

\end{Theorem}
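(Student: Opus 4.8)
The plan is to establish the estimate \eqref{eq:conts} by the classical doubling-of-variables technique from viscosity solution theory, adapted to the nonlocal, monotone-driver setting as in \cite[Theorem~4.1]{jakobsen2005}. First I would reduce to the strictly monotone case $\mu<0$ via the exponential time scaling mentioned in Remark \ref{remark:strict_monotone}, so that the zeroth-order term in $y$ provides a coercive contribution. I would then fix small parameters and consider the auxiliary function
\[
\psi(t,x,s,y)=u_1(t,x)-u_2(s,y)-\tfrac{1}{2\eps}\big(|x-y|^2+|t-s|^2\big)-\delta e^{\lambda t}(1+|x|^2)-\text{(lower-order terms)},
\]
where the quadratic penalty $\eps^{-1}|x-y|^2$ couples the two variables and the $\delta$-term guarantees that the supremum of $\psi$ over $\bar\cQ_T\times\bar\cQ_T$ is attained at some interior-in-time point $(\hat t,\hat x,\hat s,\hat y)$. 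Standard arguments then give the convergences $\eps^{-1}|\hat x-\hat y|^2\to 0$, $|\hat x-\hat y|,|\hat t-\hat s|\to 0$ as $\eps\to 0$, and control on the penalization gradient $p_\eps=\eps^{-1}(\hat x-\hat y)$.

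Next I would invoke the nonlocal maximum principle / Jensen–Ishii lemma for integro-differential equations (in the form used in \cite{jakobsen2005}) to obtain matrices $X,Y$ with $X\le Y+$(error) and subsolution/supersolution inequalities for $u_1$ at $(\hat t,\hat x)$ and $u_2$ at $(\hat s,\hat y)$, where the nonlocal terms $K^\a$ and $B^\a$ are split into a "near" part (small $|e|$, estimated via the second-order Taylor remainder against $X,Y$ and the test function, contributing the $\int_E|\eta_1^\a-\eta_2^\a|^2\,(\nu_1\vee\nu_2)$ and $\int_E\max(|\eta_1^\a|^2,|\eta_2^\a|^2)\,|\nu_1-\nu_2|$ terms) and a "far" part (large $|e|$, estimated directly using boundedness of $u_i$ and the decay $|\eta^\a(\cdot,\cdot,e)|\le C(1\wedge|e|)$). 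Subtracting the two viscosity inequalities and carefully tracking each term yields, after using: the $y$-monotonicity \eqref{eq:y_mono}, the Lipschitz continuity of $f$ in $x,z,k$, the monotonicity of $k\mapsto f^\a$, the growth bound \eqref{eq:f_growth} (which is where the range $[-\varphi(|u_2|_0),\varphi(|u_2|_0)]$ in the $f_1-f_2$ term enters), the non-decreasing Lipschitz structure of $m$ giving $m(a)-m(b)\le C(a-b)^+$ (Remark \ref{remark:strict_monotone}), and the trace/matrix estimates for the diffusion part producing the $|\sigma_1^\a-\sigma_2^\a|_0^{1/2}$ and $|b_1^\a-b_2^\a|_0^{1/2}$ contributions — an inequality of the schematic form $(-\mu)M_\eps \le \text{RHS of }\eqref{eq:conts} + o_\eps(1) + o_\delta(1)$, where $M_\eps=\sup\psi$. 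Sending $\eps\to0$ then $\delta\to0$ and noting $\sup(u_1-u_2)\le M_\eps$ gives the claim; the penalty term $-\rho(\zeta-u)^+$ is handled exactly as in \cite{jakobsen2003}, since $w\mapsto -\rho(\zeta-w)^+$ is non-decreasing in $w$ and $1$-Lipschitz-type in $\zeta$, contributing the $|\zeta_1-\zeta_2|_0$ term and not affecting the sign structure.

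The main obstacle I anticipate is the interaction of the nonlocal operators $K^\a,B^\a$ with the doubling technique, in two respects. First, because $\sigma^\a$ and $\eta^\a$ may degenerate, there is no regularization to exploit, so all estimates must come from the structural bounds on the coefficients; in particular the nonlocal terms evaluated at the test function (the $\delta e^{\lambda t}(1+|x|^2)$ part) must be shown to be controlled uniformly, using $\int_E(1\wedge|e|^2)\,\nu(de)<\infty$ and the Lévy-type growth of $\eta^\a$. Second, and more delicate, is extracting the specific exponents $1/4$ on the two nonlocal coefficient-difference terms: this requires splitting the jump integral at a threshold $|e|=\kappa$, optimizing $\kappa$ against $\eps$, and combining a Cauchy–Schwarz bound on the near part with the crude bound on the far part — the bookkeeping here, together with the need to keep all constants independent of $\rho$, is where the "extra technicalities arising from the nonlinearities of $f^\a$ and $B^\a$" referred to in the text concentrate. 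The nonlinearity of $B^\a$ additionally forces one to estimate $B_1^\a u_1 - B_2^\a u_2$ rather than a linear expression, which is handled by writing it as $\int_E m(u_1(\hat t,\hat x+\eta_1)-u_1(\hat t,\hat x))\gamma_1 - m(u_2(\hat s,\hat y+\eta_2)-u_2(\hat s,\hat y))\gamma_2\,\nu(de)$ and comparing term-by-term using the maximum-point property $\psi(\hat t,\hat x+\xi,\hat s,\hat y+\xi)\le\psi(\hat t,\hat x,\hat s,\hat y)$ to bound the relevant increments.
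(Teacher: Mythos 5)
Your overall strategy (doubling of variables, nonlocal Jensen--Ishii lemma, splitting the jump integrals at $|e|=\kappa$, telescoping the $f_1-f_2$ difference, using the monotonicity in $y$ and the one-sided bound $m(a)-m(b)\le C(a-b)^+$) is the same as the paper's, but your limit procedure contains a genuine gap. You propose to derive $(-\mu)M_\eps\le \text{RHS of }\eqref{eq:conts}+o_\eps(1)+o_\delta(1)$ and then send the doubling parameter to its degenerate limit ($\eps\to0$, i.e.\ $\eps^{-1}|x-y|^2\to\infty$). This cannot work: after the Ishii matrix inequality, the diffusion term is bounded by $C\eps^{-1}(|\sigma_1^\a-\sigma_2^\a|_0^2+|\hat x-\hat y|^2)$, and likewise the drift and nonlocal coefficient differences appear multiplied by $\eps^{-1}$, so these contributions are \emph{not} $o_\eps(1)$ plus the right-hand side of \eqref{eq:conts} -- they blow up as $\eps\to0$ unless the two coefficient sets coincide. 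That limit is the comparison-principle template, not the continuous-dependence one. The paper instead keeps the doubling parameter $\theta$ \emph{finite} throughout, collects an inequality of the form $-\mu(m_{\theta,\eps}+m^0_{\theta,\eps})\le C/\theta+C\theta\,\Delta^2+\dots$ with $\Delta^2$ the sum of squared coefficient differences, and only at the very end minimizes over $\theta$; it is exactly this optimization (plus subadditivity of $x\mapsto x^{1/4}$) that produces the H\"older exponents $1/2$ and $1/4$ in \eqref{eq:conts}. Relatedly, you attribute the exponent $1/4$ to ``optimizing $\kappa$ against $\eps$''; in the paper $\kappa$ is simply sent to $0$ (all $\kappa$-dependent pieces are $O(\kappa)$ uniformly), and no choice of $\kappa$ could in any case generate the exponents on $|\sigma_1^\a-\sigma_2^\a|_0$ and $|b_1^\a-b_2^\a|_0$, which have nothing to do with the jump part.

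A secondary point: your one-line treatment of the penalty term understates the issue. Subtracting the two viscosity inequalities leaves $-\rho\big((\zeta_1-u_1)^+-(\zeta_2-u_2)^+\big)$, which after the Lipschitz/monotone bound is controlled from below by $-\rho\big((\zeta_1-\zeta_2)-(u_1-u_2)\big)^+$ at the maximum point; this term carries a factor $\rho$ and the wrong sign, so $\rho$-independence is only rescued by the dichotomy the paper uses: either $(\zeta_1-\zeta_2)-(u_1-u_2)\ge0$ there, in which case $u_1-u_2\le|\zeta_1-\zeta_2|_0+C|\hat x-\hat y|$ directly and one is done, or the term vanishes. You should make this case split explicit rather than asserting the penalty ``does not affect the sign structure''. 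The remaining ingredients of your plan (telescoping for $f$, separate one-sided bounds on the increments of $u_1$ and $u_2$ from the maximum-point property to control the $B^\a$ terms) match the paper's argument.
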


An immediate consequence of the above continuous dependence estimate is the strong comparison principle for the penalized equation,
which implies  the uniqueness of viscosity solution to \eqref{eq:hjb_penal} in the class of bounded continuous functions.

\begin{Corollary}
Let $u$ and $v$ be a bounded  subsolution and  supersolution to \eqref{eq:hjb_penal}, respectively, with $u(0,x)\le v(0,x)$, then it holds under Assumption \ref{assum:mono} that $u(t,x)\le v(t,x)$ for all $(t,x)\in \bar{\cQ}_T$.
\end{Corollary}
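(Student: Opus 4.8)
The plan is to obtain this as an immediate specialization of Theorem \ref{thm:conts}, applying it with the two coefficient sets chosen to be identical. Concretely, take $\{b^\a_i,\sigma^\a_i,\eta^\a_i,\nu_i,\zeta_i,f_i\}=\{b^\a,\sigma^\a,\eta^\a,\nu,\zeta,f\}$ for both $i=1$ and $i=2$, which by hypothesis satisfy Assumption \ref{assum:mono} with the same $C$, $\mu$ and the same modulus $\varphi$, and then set $u_1=u$ (the given bounded subsolution) and $u_2=v$ (the given bounded supersolution). All the hypotheses of Theorem \ref{thm:conts} are then met, so estimate \eqref{eq:conts} applies verbatim.

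With this choice, every coefficient-difference term on the right-hand side of \eqref{eq:conts} vanishes: one has $|\sigma_1^\a-\sigma_2^\a|_0=|b^\a_1-b^\a_2|_0=|\zeta_1-\zeta_2|_0=0$, the driver term $\sup_{\a\in\bA}|f_1^\a(\cdot,\cdot,\cdot,0,0)-f_2^\a(\cdot,\cdot,\cdot,0,0)|_{\bar{\cQ}_T\t[-\varphi(|u_2|_0),\varphi(|u_2|_0)]}=0$, and since $\nu_1=\nu_2$ and $\eta^\a_1=\eta^\a_2$, both jump integrals $\int_{E}|\eta^\a_1-\eta^\a_2|^2\,(\nu_1\vee\nu_2)(de)$ and $\int_{E}\max(|\eta^\a_1|^2,|\eta^\a_2|^2)\,|\nu_1-\nu_2|(de)$ are zero as well. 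Hence \eqref{eq:conts} collapses to
\[
u(t,x)-v(t,x)\le \big|\big(u(0,\cdot)-v(0,\cdot)\big)^+\big|_0\qquad\text{for all }(t,x)\in\bar{\cQ}_T.
\]
Finally, I would invoke the assumption $u(0,x)\le v(0,x)$ for all $x\in\R^d$, which forces $\big(u(0,\cdot)-v(0,\cdot)\big)^+\equiv 0$ and therefore $\big|\big(u(0,\cdot)-v(0,\cdot)\big)^+\big|_0=0$. This yields $u(t,x)-v(t,x)\le 0$, i.e., $u(t,x)\le v(t,x)$ on all of $\bar{\cQ}_T$, as claimed.

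There is essentially no obstacle in this argument beyond verifying that the hypotheses of Theorem \ref{thm:conts} are satisfied in this degenerate (equal-coefficients) case; all of the analytical work has already been carried out in establishing the continuous dependence estimate. As a closing remark, running the same argument with the roles of $u$ and $v$ interchanged shows that any two bounded viscosity solutions of \eqref{eq:hjb_penal} sharing the same initial datum must coincide, which is precisely the uniqueness statement announced before the corollary.
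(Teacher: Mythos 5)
Your proposal is correct and is essentially the paper's own argument: the Corollary is stated as ``an immediate consequence'' of Theorem \ref{thm:conts}, obtained precisely by taking identical coefficient sets so that every difference term in \eqref{eq:conts} vanishes and only the initial-data term $|(u(0,\cdot)-v(0,\cdot))^+|_0=0$ remains.
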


\begin{Remark}\l{remark:comparison}
One can establish  the same estimate \eqref{eq:conts} for the  HJBVI \eqref{eq:hjbvi} by adapting the arguments for Theorem \ref{thm:conts} (see  \cite{jakobsen2003} for a discussion on the classical local HJB obstacle problems), and hence deduce similar comparison principle and uniqueness result  for the obstacle problem \eqref{eq:hjbvi}.

\end{Remark}

\begin{Remark}[Modulus of continuity]\l{rmk:modulus}
Compared to the results in \cite{jakobsen2005} that the solutions to classical HJB/Isaacs equations depend Lipschitz continuously on the local terms (in sup-norm) and  nonlocal terms (in $L^2$ norm), the solutions to \eqref{eq:hjbvi} and \eqref{eq:hjb_penal} are only H\"{o}lder continuous with respect to the coefficients with exponent $1/2$, mainly due to the additional Lipschitz nonlinearity of $f$ on $Du$ and $B^\a u$  (not the nonlinearity on $u$). 
The same  modulus of continuity has been demonstrated for a simpler case in \cite{dumitrescu2018} using probabilistic arguments. In the case where 
the nonlinearity of $f$ on $Du$ and $B^\a u$ admits particular structures (e.g.~the nonlinearity  can be expressed in a Hamiltonian form), 
one can possibly recover the standard Lipschitz dependence.
\end{Remark}

We now proceed to study the convergence of the penalized equation \eqref{eq:hjb_penal} to the HJBVI \eqref{eq:hjbvi}. The following theorem illustrates the monotonicity of $u^\rho$ in the penalty parameter $\rho$.

\begin{Theorem}\l{thm:mono_rho}
Suppose Assumption \ref{assum:mono} holds.   
Let  $u$ and $u^\rho$ be the viscosity solutions to, respectively, \eqref{eq:hjbvi} and \eqref{eq:hjb_penal} with parameter $\rho\ge 0$. Then it holds  for any $\rho_1\le \rho_2$ that $ u^{\rho_1}\le u^{\rho_2}\le u$.
\end{Theorem}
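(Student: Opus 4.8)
The plan is to prove the two inequalities $u^{\rho_1}\le u^{\rho_2}$ and $u^{\rho_2}\le u$ separately, in each case by a comparison argument: I will show that a solution of the equation with the smaller penalty (or the penalized equation itself, in the second case) is a viscosity subsolution of the equation with the larger penalty (respectively, of the HJBVI), and then invoke the comparison principle together with the agreement of the initial data at $t=0$. Since all the penalized problems and the HJBVI share the same initial condition $u(0,x)=g(x)$, the boundary term is automatically handled; the work is entirely at interior points of $\cQ_T$.

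For the monotonicity in $\rho$, fix $\rho_1\le\rho_2$ and let $u^{\rho_1}$ solve \eqref{eq:hjb_penal} with parameter $\rho_1$. I would check that $u^{\rho_1}$ is a viscosity subsolution of \eqref{eq:hjb_penal} with parameter $\rho_2$. Indeed, at any test point where $u^{\rho_1}-\phi$ attains a global maximum, the subsolution inequality for $F^{\rho_1}$ gives $\phi_t+\inf_\a(-L^\a\phi-f^\a(\cdot,u^{\rho_1},\dots))-\rho_1(\zeta-u^{\rho_1})^+\le 0$, and since $(\zeta-u^{\rho_1})^+\ge 0$ and $\rho_2\ge\rho_1$ we get $-\rho_2(\zeta-u^{\rho_1})^+\le -\rho_1(\zeta-u^{\rho_1})^+$, so the same inequality holds with $\rho_2$ in place of $\rho_1$; thus $u^{\rho_1}$ is a subsolution of $F^{\rho_2}=0$. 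Because $u^{\rho_2}$ is a (super)solution of $F^{\rho_2}=0$ and $u^{\rho_1}(0,\cdot)=g=u^{\rho_2}(0,\cdot)$, the comparison principle (the Corollary following Theorem \ref{thm:conts}) yields $u^{\rho_1}\le u^{\rho_2}$ on $\bar{\cQ}_T$.

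For the bound $u^{\rho_2}\le u$, let $u^\rho$ solve \eqref{eq:hjb_penal} for some $\rho\ge 0$ and let $u$ solve the HJBVI \eqref{eq:hjbvi}; I claim $u^\rho$ is a viscosity subsolution of \eqref{eq:hjbvi}. At a maximum point of $u^\rho-\phi$ one has $\phi_t+\inf_\a(-L^\a\phi-f^\a(\cdot,u^\rho,\dots))=\rho(\zeta-u^\rho)^+\ge 0$; on the other hand, to show the min in \eqref{eq:hjbvi} is $\le 0$ it suffices to show $u^\rho-\zeta\le 0$ at that point — but this need not hold pointwise, so instead I would argue: either $u^\rho(\x_0)\le\zeta(\x_0)$, in which case the first branch of the min is $\le0$; or $u^\rho(\x_0)>\zeta(\x_0)$, in which case $(\zeta-u^\rho)^+(\x_0)=0$, so the penalty term vanishes and the HJB branch $\phi_t+\inf_\a(-L^\a\phi-f^\a(\cdot,u^\rho,\dots))$ equals $0$ at $\x_0$, hence $\le 0$. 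Either way the min is $\le 0$, so $u^\rho$ is a subsolution of \eqref{eq:hjbvi}. Since $u$ is a supersolution of \eqref{eq:hjbvi} with the same initial data $u^\rho(0,\cdot)=g=u(0,\cdot)$, the comparison principle for the HJBVI (Remark \ref{remark:comparison}) gives $u^\rho\le u$.

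The main subtlety — and the only step requiring care — is the nonlocal one: a priori $K^\a$ and $B^\a$ in the definition of a viscosity solution are evaluated at the test function $\phi$, not at $u^\rho$, so one must make sure the subsolution-of-a-different-equation argument is legitimate for the nonlocal operators. Here this is immediate because all three penalized/obstacle problems use exactly the same operators $L^\a$, $f^\a$ and $B^\a$ and the same notion of viscosity solution (the nonlocal terms being frozen to $\phi$ or, equivalently, handled via the standard decomposition $\phi$-near / $u^\rho$-far from the test point); the perturbation between the equations is only in the purely local scalar penalty term $-\rho(\zeta-u^\rho)^+$, which is monotone and pointwise comparable, so no new estimates on the nonlocal terms are needed. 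With semicontinuity of sub/supersolutions built into Definition \ref{definition:viscosity}, both comparison invocations apply verbatim, completing the proof.
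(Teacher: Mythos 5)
Your proposal is correct and follows essentially the same route as the paper: both reduce the theorem to two elementary observations (a solution of a penalized equation is a subsolution of the HJBVI, and the sub/supersolution property transfers between penalty parameters because $-\rho(\zeta-u)^+$ is monotone in $\rho$) and then invoke the comparison principles for \eqref{eq:hjb_penal} and \eqref{eq:hjbvi}. The only cosmetic difference is that for $u^{\rho_1}\le u^{\rho_2}$ you transfer the subsolution property from $\rho_1$ to $\rho_2$ while the paper transfers the supersolution property from $\rho_2$ to $\rho_1$ (and your ``$=$'' in the viscosity test should strictly be ``$\le$'', though only the inequality is used).
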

\begin{proof}
We start with the following two important observations, which can be established directly from the definitions: 
\begin{inparaenum}[(1)]
\item If $u^\rho$ is a subsolution to \eqref{eq:hjb_penal} with any $\rho\ge 0$, then $u^\rho$ is a subsolution to \eqref{eq:hjbvi};
\item If $\rho_1\le \rho_2$ and $u^{\rho_2}$ is a supersolution  to \eqref{eq:hjb_penal} with the parameter $\rho_2$, then 
$u^{\rho_2}$ is a supersolution to \eqref{eq:hjb_penal} with the parameter $\rho_1$.
\end{inparaenum}
Then the comparison principles for \eqref{eq:hjbvi} and \eqref{eq:hjb_penal}
enable us to conclude the desired results.
%
%
%
%
%
%
\end{proof}

The next result asserts the convergence rate of the penalized equation  to the HJBVI, which depends on the regularity  of the obstacle as observed in \cite{jakobsen2003,witte2011}.
\begin{Theorem}\l{thm:conv_rho}

Let  $u$ and $u^\rho$ be the viscosity solution to \eqref{eq:hjbvi} and \eqref{eq:hjb_penal}.
Suppose  Assumption \ref{assum:mono} holds and the obstacle $\zeta$ is H\"{o}lder continuous in $t$ with exponent $\mu\in (0,1]$, then there exists a constant $C_0>0$, independent of the penalty parameter $\rho$, such that
\bb\l{eq:penalty_conv}
0\le u(\x)-u^\rho(\x)\le C_0\rho^{-\min(\mu,\f{1}{2})}, \q \x\in \bar{\cQ}_T.
\ee
If we further assume  $\zeta\in C^{1,2}_b(\bar{Q}_T)$, then we have
\bb\l{eq:penalty_conv_smooth}
0\le u(\x)-u^\rho(\x)\le C_0/\rho, \q \x\in \bar{\cQ}_T.
\ee
\end{Theorem}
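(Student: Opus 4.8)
Since Theorem \ref{thm:mono_rho} already yields $u^\rho\le u$, the lower bound $0\le u-u^\rho$ is for free, and the whole content is the upper bound. The plan is a two-step sandwiching argument resting on the comparison principles for \eqref{eq:hjb_penal} and for \eqref{eq:hjbvi} (Remark \ref{remark:comparison}); by Remark \ref{remark:strict_monotone} I may and do assume $\mu<0$ in \eqref{eq:y_mono}, so that $f^\a$ is (strictly) non-increasing in $y$. First I would bound how far $u^\rho$ can dip below the obstacle, $\kappa:=\sup_{\bar\cQ_T}(\zeta-u^\rho)^+$, and then show that $u^\rho+\kappa$ is a supersolution of the HJBVI.

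\textbf{Step 1 (one-sided obstacle bound).} The key remark is that, since $\rho(\zeta-u^\rho)^+\ge0$, any supersolution of \eqref{eq:hjb_penal} is a supersolution of the penalty-free equation $v_t+\inf_\a(-L^\a v-f^\a(\cdot,\cdot,v,(\sigma^\a)^TDv,B^\a v))=0$, so I may compare $u^\rho$ from below against a smooth subsolution built from $\zeta$. If $\zeta\in C^{1,2}_b(\bar\cQ_T)$, put $C_1:=\big|\zeta_t+\inf_\a\big(-L^\a\zeta-f^\a(\cdot,\cdot,\zeta,(\sigma^\a)^TD\zeta,B^\a\zeta)\big)\big|_0<\infty$ (finite by boundedness of the coefficients and \eqref{eq:f_growth}, as $\zeta$ is bounded). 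Because $L^\a$, $B^\a$ and $D$ are insensitive to additive constants while $f^\a$ is non-increasing in $y$, the smooth function $w:=\zeta-C_1/\rho$ satisfies
\[ w_t+\inf_\a\big(-L^\a w-f^\a(\cdot,\cdot,w,(\sigma^\a)^TDw,B^\a w)\big)-\rho(\zeta-w)^+\;\le\;C_1-\rho\cdot(C_1/\rho)\;=\;0 \]
together with $w(0,\cdot)=\zeta(0,\cdot)-C_1/\rho\le g$, hence $w$ is a subsolution of \eqref{eq:hjb_penal}; the comparison principle for \eqref{eq:hjb_penal} gives $u^\rho\ge\zeta-C_1/\rho$, i.e.\ $\kappa\le C_1/\rho$. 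If $\zeta$ is only H\"older-$\mu$ in $t$ (and Lipschitz in $x$ by Assumption \ref{assum:mono}), I would extend $\zeta$ to negative times preserving its regularity and mollify separately in space and time with parameters $\delta$ and $\eps$ to obtain $\zeta_{\delta,\eps}\in C^{1,2}_b(\bar\cQ_T)$ with $\|\zeta_{\delta,\eps}-\zeta\|_0\le C(\delta+\eps^\mu)$, $|D\zeta_{\delta,\eps}|_0\le C$, $|D^2\zeta_{\delta,\eps}|_0\le C\delta^{-1}$, $|\zeta_{\delta,\eps,t}|_0\le C\eps^{\mu-1}$, and (by Taylor expansion together with the integrability of $\nu$ against $1\wedge|e|^2$) a penalty-free residual bounded by $C_1(\delta,\eps)\le C(\delta^{-1}+\eps^{\mu-1})$. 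Repeating the argument with $w:=\zeta_{\delta,\eps}-C_1(\delta,\eps)/\rho-C\|\zeta_{\delta,\eps}-\zeta\|_0$, the extra constant absorbing the mismatch $\zeta_{\delta,\eps}(0,\cdot)\le g+C\|\zeta_{\delta,\eps}-\zeta\|_0$, yields
\[ \kappa\;\le\;C\big(\delta+\eps^\mu+\delta^{-1}/\rho+\eps^{\mu-1}/\rho\big), \]
and choosing $\delta\sim\rho^{-1/2}$, $\eps\sim\rho^{-1}$ (for $\rho\ge1$; bounded $\rho$ being trivial since $u$, $u^\rho$ are bounded) gives $\kappa\le C_0\rho^{-\min(\mu,1/2)}$, the $\rho^{-1/2}$ stemming from the degenerate second-order term and the $\rho^{-\mu}$ from the temporal regularization.

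\textbf{Step 2 (closing the estimate).} Set $v:=u^\rho+\kappa$. First, $v-\zeta=u^\rho+\kappa-\zeta\ge u^\rho+(\zeta-u^\rho)^+-\zeta\ge0$ on $\bar\cQ_T$. Second, since $L^\a$, $B^\a$, $D$ ignore the constant shift and $f^\a$ is non-increasing in $y$, translating test functions by $\kappa$ and using that $u^\rho$ is a supersolution of \eqref{eq:hjb_penal} gives, in the viscosity sense,
\[ v_t+\inf_\a\big(-L^\a v-f^\a(\cdot,\cdot,v,(\sigma^\a)^TDv,B^\a v)\big)\;\ge\;u^\rho_t+\inf_\a\big(-L^\a u^\rho-f^\a(\cdot,\cdot,u^\rho,(\sigma^\a)^TDu^\rho,B^\a u^\rho)\big)-\mu\kappa\;\ge\;\rho(\zeta-u^\rho)^+-\mu\kappa\;\ge\;0 . \]
These two facts say precisely that $v$ is a viscosity supersolution of \eqref{eq:hjbvi}, and $v(0,\cdot)=g+\kappa\ge g=u(0,\cdot)$; as $u$ is a bounded subsolution of \eqref{eq:hjbvi}, the comparison principle for the HJBVI yields $u\le v=u^\rho+\kappa$, which is \eqref{eq:penalty_conv}, and $u\le u^\rho+C_0/\rho$ in the smooth case, which is \eqref{eq:penalty_conv_smooth}.

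\textbf{Main obstacle.} The delicate part is the H\"older case of Step 1: one needs a regularized obstacle $\zeta_{\delta,\eps}$ that simultaneously approximates $\zeta$ with the optimal error $O(\delta+\eps^\mu)$, has HJB residual only $O(\delta^{-1}+\eps^{\mu-1})$ --- which forces a careful treatment of the nonlocal terms $K^\a\zeta_{\delta,\eps}$ and $B^\a\zeta_{\delta,\eps}$ under the integrability properties of $\nu$ --- and, after the constant shift, is a genuine subsolution of \eqref{eq:hjb_penal}; the last point relies crucially on the monotonicity of $f$ in $y$ and on the fact that every operator except $f(\cdot,\cdot,y,\cdot,\cdot)$ is unaffected by additive constants. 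Once this barrier is constructed, the optimization over $(\delta,\eps)$ and the monotonicity bookkeeping in Step 2 are routine, and all constants are visibly independent of $\rho$.
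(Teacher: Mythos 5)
Your argument is correct and follows, in substance, the Jakobsen (2003) strategy the paper invokes: first bound the penalty gap $\kappa=\sup_{\bar{\cQ}_T}(\zeta-u^\rho)^+$ by a barrier built from a regularized obstacle, then transfer this to $u-u^\rho$ via the comparison principle for the variational inequality. The two places where your execution differs from the paper's are presentational rather than substantive. First, you regularize with two independent parameters $\delta$ (space) and $\eps$ (time) and optimize them separately, whereas the paper uses a single anisotropic mollifier $\varrho_\eps(t,x)=\eps^{-(d+\f{1}{1-\mu})}\varrho(\eps^{-1/(1-\mu)}t,\eps^{-1}x)$ that ties the two scalings together and thus yields the $\rho^{-\min(\mu,1/2)}$ rate from a single balancing; after optimization the two choices are equivalent, and yours has the small advantage of not becoming degenerate at $\mu=1$. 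Second, you fold the obstacle-approximation error directly into the barrier via the extra constant $-C\|\zeta_{\delta,\eps}-\zeta\|_0$ and compare against the original penalized equation, whereas the paper's proof sketch routes this perturbation through the continuous dependence estimate of Theorem \ref{thm:conts} before invoking the $C^{1,2}_b$ case; both paths ultimately rest on the same comparison principles. Two small points to tidy in the write-up: you use $\mu$ for both the monotonicity constant of $f$ (which you take $<0$) and the H\"older exponent of $\zeta$ in $t$ — a clash inherited from the paper's own notation but best disambiguated; and in the Step 2 display, the sharper statement $f^\a(u^\rho+\kappa,\cdot)\le f^\a(u^\rho,\cdot)$ (for $\kappa\ge0$, $\mu\le0$) is all you actually need, so the intermediate $-\mu\kappa$ term is redundant although the sign bookkeeping is correct.
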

\begin{proof}
The proof  follows precisely the steps in the arguments for \cite[Theorem 2.1]{jakobsen2003}, together with the continuous dependence estimate
as ascertained by Theorem \ref{thm:conts}. The main steps are establishing \eqref{eq:penalty_conv_smooth} for regular obstacles, smoothing a general obstacle with the following mollifiers:
$$\varrho_\eps(t,x)=\eps^{-(d+\f{1}{1-\mu})} \varrho\big(\eps^{-\f{1}{1-\mu}}t,\eps^{-1}x\big),$$
where $\varrho$ is a positive smooth function supported in $\{0<t<1\}\t\{|x|<1\}$ with mass one, 
 and balancing the approximation error in the two
cases.
\end{proof}

We end this section with a convergent approximation of the  free boundary, $\Gamma=\{\x \in \bar{\cQ}_T\mid u(\x)=\zeta(\x)\}$,
 of the HJBVI \eqref{eq:hjbvi}  using the solution of  penalized equations. 
 Suppose the estimate $0\le u(\x)-u^\rho(\x)\le C_0\rho^{-\mu}$ holds for some constants $C_0>0$ and $\mu\in (0,1]$,
 we then define for each $\rho>0$ the set 
 \bb\l{eq:free_approx}
 \Gamma_\rho=\{\x\in \bar{\cQ}_T\mid \zeta(\x)-C_0\rho^{-\mu}\le u^\rho(\x)\le \zeta(\x)\}.
 \ee
It follows directly from the  estimates for $ u^\rho$ and $u$ that $\Gamma\subset \Gamma_\rho$ for all $\rho>0$.
The next result demonstrates that
$\Gamma_\rho$ in fact converges to $\Gamma$ in terms of the Hausdorff metric.

\begin{Theorem}
It holds for any given compact set $K\subset \bar{\cQ}_T$ that
$$
\lim_{\rho\to \infty}d_\cH(\Gamma_\rho \cap K,\Gamma\cap K)=\lim_{\rho\to \infty} \sup_{\y\in \Gamma_\rho \cap K}\inf_{\x\in \Gamma\cap K}|\x-\y|= 0.
$$
\end{Theorem}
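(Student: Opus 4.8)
The plan is to reduce $d_\cH(\Gamma_\rho\cap K,\Gamma\cap K)$ to a single one-sided distance and then kill it by a compactness argument resting on the continuity of the value functions. First I would observe that, since $\Gamma\subset\Gamma_\rho$ and hence $\Gamma\cap K\subset\Gamma_\rho\cap K$, one of the two suprema defining the Hausdorff distance, namely $\sup_{\x\in\Gamma\cap K}\operatorname{dist}(\x,\Gamma_\rho\cap K)$, is identically zero; so it suffices to show $\sup_{\y\in\Gamma_\rho\cap K}\operatorname{dist}(\y,\Gamma\cap K)\to 0$ as $\rho\to\infty$. (Both $\Gamma\cap K$ and $\Gamma_\rho\cap K$ are compact, since $\Gamma=\{u=\zeta\}$ and $\Gamma_\rho$ are closed by continuity of $u$, $u^\rho$ and $\zeta$, so all point-to-set distances are attained.) Next I would record the key uniform ``near-contact'' bound on $\Gamma_\rho$: for $\y\in\Gamma_\rho$, the defining inequalities \eqref{eq:free_approx} give $\zeta(\y)-C_0\rho^{-\mu}\le u^\rho(\y)\le\zeta(\y)$, and the assumed penalization estimate (a consequence of Theorem \ref{thm:conv_rho}) gives $0\le u(\y)-u^\rho(\y)\le C_0\rho^{-\mu}$; together these yield $|u(\y)-\zeta(\y)|\le C_0\rho^{-\mu}$ for every $\y\in\Gamma_\rho$.

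Then I would fix $\eps>0$ and introduce the compact set $K_\eps:=\{\x\in K\mid\operatorname{dist}(\x,\Gamma\cap K)\ge\eps\}$ (closed in the compact set $K$, hence compact, with the convention $K_\eps=K$ if $\Gamma\cap K=\emptyset$). If $K_\eps=\emptyset$ there is nothing to prove for this $\eps$. Otherwise $K_\eps$ is disjoint from $\Gamma$, so the continuous function $\x\mapsto|u(\x)-\zeta(\x)|$ — continuous because $u$ is a continuous viscosity solution and $\zeta$ is continuous, and vanishing exactly on $\Gamma$ — is strictly positive on $K_\eps$ and attains a positive minimum $\delta_\eps>0$ there.

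To conclude, I would pick $\rho_\eps$ with $C_0\rho^{-\mu}<\delta_\eps$ for all $\rho\ge\rho_\eps$; then for such $\rho$ and any $\y\in\Gamma_\rho\cap K$ the near-contact bound gives $|u(\y)-\zeta(\y)|\le C_0\rho^{-\mu}<\delta_\eps$, which rules out $\y\in K_\eps$, i.e.\ $\operatorname{dist}(\y,\Gamma\cap K)<\eps$. Taking the supremum and recalling that the other one-sided distance vanishes, $d_\cH(\Gamma_\rho\cap K,\Gamma\cap K)\le\eps$ for all $\rho\ge\rho_\eps$; letting $\eps\downarrow0$ finishes the proof. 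The degenerate case $\Gamma\cap K=\emptyset$ is handled along the way: taking $K_\eps=K$ shows $\Gamma_\rho\cap K=\emptyset$ for $\rho$ large, so the distance is eventually zero. I do not expect a serious obstacle here: the substantive input, the uniform estimate $|u-\zeta|\le C_0\rho^{-\mu}$ on $\Gamma_\rho$, is already contained in the penalization rate together with the definition of $\Gamma_\rho$, and the only thing needing care is organizing the compactness argument — in particular exploiting that $\Gamma$ is exactly the zero set of the continuous function $u-\zeta$ — so that it covers the degenerate cases uniformly.
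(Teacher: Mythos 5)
Your proof is correct, and it takes a genuinely different route from the paper's. The paper argues by contradiction: it supposes the statement fails, extracts sequences $\rho_n\to\infty$ and $\y_n\in\Gamma_{\rho_n}\cap K$ staying $\eps$-far from $\Gamma\cap K$, passes to a convergent subsequence $\y_n\to\y^*\in K$, and then writes $u(\y^*)-\zeta(\y^*)$ as a telescoping sum of four differences, using the penalization bound, the defining inequalities of $\Gamma_{\rho_n}$, and continuity of $u,\zeta$ to force $u(\y^*)\le\zeta(\y^*)$; together with $u\ge\zeta$ this puts $\y^*$ in $\Gamma$, a contradiction. You instead first dispose of one of the two one-sided distances using $\Gamma\subset\Gamma_\rho$, then isolate the uniform near-contact bound $0\le u-\zeta\le C_0\rho^{-\mu}$ on $\Gamma_\rho$, and finally apply the extreme value theorem to the continuous function $|u-\zeta|$ on the compact set $K_\eps=\{\x\in K:\operatorname{dist}(\x,\Gamma\cap K)\ge\eps\}$ to get a positive gap $\delta_\eps$, which yields an explicit threshold $\rho_\eps$ beyond which $\Gamma_\rho\cap K$ is $\eps$-close to $\Gamma\cap K$. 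Both proofs use exactly the same ingredients (the inclusion $\Gamma\subset\Gamma_\rho$, the penalization rate, continuity of $u-\zeta$, and compactness of $K$), but your direct argument is slightly more quantitative and separates the compactness step more cleanly; the paper's sequential contradiction is shorter to write but leaves the modulus of convergence implicit. You also handle the degenerate case $\Gamma\cap K=\emptyset$ explicitly, which the paper glosses over.
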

\begin{proof}
Suppose the statement does not hold for a given compact set $K$, then there exist a constant $\eps>0$ and sequences $\{\rho_n\}$ and $\{\y_n\}=\{t_n,y_n\}$ such that $\rho_n\to \infty$, $\y_n\in \Gamma_{\rho_n} \cap K$ and $\inf_{\x\in \Gamma\cap K}|\x-\y_n|\ge\eps$. By passing to a subsequence, we can assume that $\y_n\to \y^*\in K$ and
$\inf_{\x\in \Gamma\cap K}|\x-\y^*|\ge\eps$, which implies that $\y^*\not\in \Gamma$.
However, the definition of $\Gamma_{\rho_n}$ gives
\begin{align*}
u(\y^*)-\zeta(\y^*)&=u(\y^*)-u(\y_n)+u(\y_n)-u^{\rho_n}(\y_n)+u^{\rho_n}(\y_n)-\zeta(\y_n)+\zeta(\y_n)-\zeta(\y^*)\\
&\le u(\y^*)-u(\y_n)+C_0\rho_n^{-\mu}+0+\zeta(\y_n)-\zeta(\y^*)\to 0,
\end{align*}
as $n\to \infty$, which together with the fact that $u\ge \zeta$ implies $\y^*\in \Gamma$, and hence a contradiction.
\end{proof}

\section{Discrete approximations for penalized equations}\l{sec:scheme}
In this section, we  propose a class of semi-implicit monotone approximations for solving the penalized equation \eqref{eq:hjb_penal} with a fixed penalty parameter $\rho\ge 0$.~We shall construct  the schemes in Section \ref{sec:scheme_construct} and perform their analysis in Section \ref{sec:scheme_analysis}. In order to  derive more accurate estimates for the truncation error and the stability condition of the schemes, 
throughout this section we shall impose the following condition on the L\'{e}vy measure:
\begin{Assumption}\l{assum:density}
The L\'{e}vy measure $\nu$ admits a density $k(e)$ with the following estimate: it holds for some constants $C>0$ and $\kappa\in [0,2)$  that
\bb\l{eq:density}
0\le k(e)\le C|e|^{-n-\kappa}, \q |e|<1, \; e\in E=\R^n\setminus\{0\}.
\ee
\end{Assumption}
We emphasize that Assumption \ref{assum:density} is  imposed for the sake of preciseness, and will only be used  in Section \ref{sec:scheme}.
Without the above estimate, one can still establish the consistency and stability of the schemes, but with more  pessimistic results (see for example \cite{dumitrescu2018}).

\subsection{Semi-implicit numerical methods}\l{sec:scheme_construct}
In this section, we shall derive semi-implicit monotone approximations for  \eqref{eq:hjb_penal}. We recall that it is crucial to construct a monotone discretization, since in general non-monotone schemes may fail to converge or even converge to false ``solutions" (see \cite{debrabant2012}).
For simplicity, we focus on the  uniform spatial grid   $\{x_i\}_i=h\Z^d$ on $\R^d$ and a time partition $\{t_n\}_{n=0}^N$ with 
$ \max_{n}|t_{n+1}-t_n|=\Delta t$, 
but similar results  are valid for unstructured nondegenerate  grids as well.

We start with the approximation of the nonlocal operators by truncating the small jumps of the L\'{e}vy measure and compensating it with an additional diffusion term as suggested in \cite{dumitrescu2018}. More precisely, for any $r\in (0,1)$, we introduce the truncated L\'{e}vy measure $\nu_r(de)=1_{|e|>r}\nu(de)$ and the modified diffusion coefficient $\sigma_r^\a$ with $\sigma_{r,ij}^\a=\sigma_{ij}^\a$ for $i\not =j$ and 
\bb\l{eq:small_diff}
\sigma^{\a}_{r,ii}(t,x)=\bigg((\sigma^{\a}_{ii}(t,x))^2+\int_{|e|<\eps}|\eta^{\a}_i(t,x,e)|^2\, \nu(de)\bigg)^{1/2}, \q i=1,\ldots, d, \; (t,x)\in \bar{\cQ}_T.
\ee
Then we shall consider a modified version of \eqref{eq:hjb_penal}
with the following  modified operators:
\begin{align}
A^{\a}_r \phi(t,x)= &\f{1}{2} \tr(\sigma_r^{\a} (t,x)(\sigma_r^{\a} (t,x))^TD^2\phi)+\big(b^{\a} (t,x)-\int_{|e|>r} \eta^{\a} (t,x,e)\,\nu(de)\big)\cdot D\phi,\l{eq:ar}\\
K^{\a}_r \phi(t,x)=&\int_{|e|>r}\big(\phi(t,x+\eta^{\a} (t,x,e))-\phi(t,x)\big)\,\nu(de),\l{eq:kr}\\
B^{\a}_r \phi(t,x)=&\int_{|e|>r}m\big(\phi(t,x+\eta^{\a} (t,x,e))-\phi(t,x)\big)\gamma(t,x,e)\,\nu(de),\l{eq:br}
\end{align}
for all $(t,x)\in \bar{\cQ}_T$ and test functions $\phi\in C^{1,2}(\bar{\cQ}_T)$. It is clear that these approximations 
 are consistent with \eqref{eq:hjb_penal} in the sense that for any  $\x=(t,x)\in \bar{\cQ}_T$,
\bb\l{eq:r_consistent}
|A^{\a}_r \phi(\x)+K^{\a}_r \phi(\x)-A^{\a} \phi(\x)-K^{\a}\phi(\x)|+|B^{\a}_r \phi(\x)-B^{\a}\phi(\x)|\le C\int_{|e|<r}|e|^2\, \nu(de)\le C r^{2-\kappa}.
\ee
In fact, suppose $u^\rho$ and $u^\rho_r$ solve the original and modified penalized equation respectively, one can  deduce from Theorem \ref{thm:conts} and 
Assumption \ref{assum:density}  that for any $(t,x)\in \bar{\cQ}_T$ we have:
\begin{align*}
| u_r(t,x)-u_r^\rho(t,x)|\le & C\sup_{\a\in\bA}\bigg(\big|\int_{|e|<r}(|\eta^\a(t,x,e)|^2 \nu(de)\big|^\f{1}{4}_0\bigg)\le O(r^{(2-\kappa)/4}).
\end{align*}
Since we consider a penalized equation \eqref{eq:hjb_penal} with a general nonlinear $f$, the above estimate may not be optimal for  equations with convex structures (see Remark \ref{rmk:modulus} for details).

The nonlocal operators $K^{\a}_r$ and $B^{\a}_r$ are then approximated by a combination of interpolation and 
quadrature rules as in \cite{debrabant2012,dumitrescu2018}.
Let $\cI_h$ be  a second-order positive interpolation operator on the spatial grid $\{x_i\}$, for instance the  linear or multilinear interpolations, such that for all $x\in \R^d$, 
\begin{align}\l{eq:error_intp}
\cI_h[\phi](x)=\sum_{m\in \Z^d}\phi(x_m)\omega_m(x;h),\q 
|\phi(x)-\cI_h[\phi](x)|\le Ch^2|D^2\phi|_0, 
\end{align}
where $\{\omega_m(x;h)\}_m$ are some basis functions satisfying $0\le \omega_m(x;h)\le 1$, $\sum_m\om_m=1$, $\omega_m(x_i;h)=\delta_{mi}$ and $\textrm{supp}\, \om_m\subset B(x_m,2h)$, we shall approximate the nonlocal terms \eqref{eq:kr} and \eqref{eq:br} by
\begin{align}
K^{\a}_{r,h} \phi(t_n,x_i)=&\int_{|e|>r}\cI_h[\phi(t_n,x_i+\cdot)-\phi(t_n,x_i)](\eta^{\a} (t_n,x_i,e))\,\nu(de),
\l{eq:Kh_approx}\\
B^{\a}_{r,h} \phi(t_n,x_i)=&\int_{|e|>r}m\big(\cI_h[\phi(t_n,x_i+\cdot)-\phi(t_n,x_i)](\eta^{\a} (t_n,x_i,e))\big)\gamma(t_n,x_i,e)\,\nu(de)\nb\\
=&\int_{|e|>r}m\big(\sum_{j\in \Z^d}\om_j(\eta^{\a} (t_n,x_i,e);h)[\phi(t_n,x_i+x_j)-\phi(t_n,x_i)]\big)\gamma(t_n,x_i,e)\,\nu(de), \l{eq:Bh}
\end{align}
which in practice can be further evaluated by using consistent quadrature rules with positive weights, such as Gauss methods of appropriate order.

We remark that using \eqref{eq:error_intp}, one can express \eqref{eq:Kh_approx} in  the following monotone form: 
\bb\l{eq:Kh}
K^{\a}_{r,h} \phi(t_n,x_i)=\sum_{j\in \Z^d}k^{\a, n}_{r,h,j,i}[\phi(t_n,x_i+x_j)-\phi(t_n,x_i)],\q
k^{\a, n}_{r,h,j,i}=\int_{|e|>r}\om_j(\eta^{\a} (t_n,x_i,e);h)\,\nu(de), 
\ee
while, as we will see in the subsequent analysis, the approximation \eqref{eq:Bh} is closely related to the following coefficients 
\bb\l{eq:brh}
b^{\a, n}_{r,h,j,i}=\int_{|e|>r}\om_j(\eta^{\a} (t_n,x_i,e);h)\gamma(t_n,x_i,e)\,\nu(de),\q j\in \Z^d,
\ee
where we assume without loss of generality that $k^{\a, n}_{r,h,0,i}=b^{\a, n}_{r,h,0,i}=0$ for all $i\in \Z^d$.

The boundedness of $\eta$ implies that the sums in \eqref{eq:Kh} and \eqref{eq:Bh} are finite. Moreover, one can deduce from \eqref{eq:error_intp} that for any fixed $r\in( 0,1)$, these approximations are consistent with the truncation error
\begin{align}\l{eq:KB_consistent}
\begin{split}
&|K^{\a}_{r,h} \phi(t_n,x_i)-K^{\a}_r \phi(t_n,x_i)|\le C|D\phi^2|_0h^2\Gamma_1(r,\kappa),\\
&|B^{\a}_{r,h} \phi(t_n,x_i)-B^{\a}_r \phi(t_n,x_i)|\le C|D\phi^2|_0h^2\Gamma_2(r,\kappa),
\end{split}
\end{align}
with some constant $C$ independent of $r$ and $h$, and 
\begin{align}
\Gamma_1(r,\kappa)=\int_{|e|>r} \nu(de)&\le \begin{cases} -\log r & \textnormal{if $\kappa=0$},\\ r^{-\kappa} &\textnormal{if $\kappa>0$}, \end{cases} \l{eq:gamma1}\\
\Gamma_2(r,\kappa)=\int_{|e|>r}(1\wedge |e|) \nu(de)&\le\begin{cases}  1 & \textnormal{if $\kappa\in [0,1)$},\\ -\log r &\textnormal{if $\kappa=1$}, \\ r^{1-\kappa} &\textnormal{if $\kappa\in (1,2)$},\end{cases} \l{eq:gamma2}
\end{align}
where we have used the density estimate \eqref{eq:density}.
Since Godunov's Theorem in \cite{godunov1959} asserts that 
one  in general can expect a monotone scheme to be  at most first order accurate,  in the following, we shall choose $r=\max(h^{1/\kappa},h)$ to ensure the truncation error \eqref{eq:KB_consistent} to be of the magnitude $O(h)$.

We now estimate the summations  of coefficients $k^{\a, n}_{r,h,j,i}$ and $b^{\a, n}_{r,h,j,i}$, which will be essential for the stability of the scheme. The property  $\sum_j \om_j=1$ leads immediately to the estimate
\bb\l{eq:sumkb}
\sum_j k^{\a, n}_{r,h,j,i}\le \Gamma_1(r,\kappa),\q \sum_j b^{\a, n}_{r,h,j,i}\le C\Gamma_2(r,\kappa), \q \fa i\in \Z^d, \; n=0,\ldots, N,
\ee
where $\Gamma_1$ and $\Gamma_2$ are defined as in \eqref{eq:gamma1} and  \eqref{eq:gamma2}, respectively.
It is worth pointing out that other upper bounds of these summations can been derived  using the approach in \cite{biswas2017}. In fact, suppose the basis functions have the property that $|D\om_j|_0\le C/h$, then we can deduce that
\begin{align*}
\sum_{j\not=0}k^{\a, n}_{r,h,j,i}=\sum_{j\not=0}\int_{|e|>r}\om_j(\eta^{\a} (t_n,x_i,e);h)-\om_j(0;h)\,\nu(de)\le \f{C}{h}\int_{|e|>r} |\eta^{\a} (t_n,x_i,e)|\nu(de)\le \f{C}{h}\Gamma_2(r,\kappa).
\end{align*}
However, the relation $r\ge h$ clearly implies that  \eqref{eq:sumkb} always gives a sharper upper bound.

We now proceed to consider  the modified local operator $A^\a_r$, which will be approximated by  a consistent and monotone scheme $A_{r,h}^\a$, such that for any test function $\phi$ we have
\begin{align}
|A^\a_r \phi-A^\a_{r,h}\phi| &\le C|D^2\phi|_0 h\Gamma_2(r,\kappa),\l{eq:A_consistent}\\
A^\a_{r,h}\phi(t_n,x_i)&=\sum_{j\in \Z^d} l^{\a, n}_{r,h,j,i}[\phi(t_n,x_j)-\phi(t_n,x_i)], \l{eq:Ah}
\end{align}
with some constant $C$ independent of $r$ and $h$, $\Gamma_2(r,\kappa)$ defined as in \eqref{eq:gamma2}, and coefficients $l^{\a, n}_{r,h,j,i}\ge 0$  for all $i,j\in \Z^d$ and $n$. 
The construction of numerical approximations with the above properties has been discussed  thoroughly in \cite{biswas2010diff}. In particular, one can adopt the  standard schemes of Kushner in \cite{kushner1992} if the diffusion coefficient is diagonally dominant, and use the semi-Lagrangian scheme in \cite{debrabant2012} if the coefficient $\tilde{\sigma}^a(\tilde{\sigma}^a)^T$ is not diagonally dominant.

Finally, we construct numerical approximations for the Lipschitz nonlinearity of $f$ on $Du$. For simplicity, we shall focus on the Lax-Friedrichs  numerical flux, but it is straightforward to extend our schemes and analysis to other  Lipschitz  numerical fluxes, for instance the Godunov flux, which are monotone and  consistent with $f$ (see \cite{crandall1984,osher1991}). 

Let $U_i^n$ be the discrete approximation of the solution to \eqref{eq:hjb_penal} at the node $(t_n,x_i)$, we  denote by 
 $\Delta^{(l)}_+U^n_i$ (resp.~$\Delta^{(l)}_-U^n_i$) the
 one-step forward (resp.~backward) difference of $U$ along the $l$-th coordinate for each $l = 1,\ldots,d$, and by $\Delta U^n_i =
(\Delta^{(1)}_+U^n_i +\Delta^{(1)}_-U^n_i ,\ldots, \Delta^{(d)}_+U^n_i +\Delta^{(d)}_-U^n_i )^T$ the central difference of $U$  at the node $(t_n,x_i)$.
Then for any given $(y,k)\in \R\t \R$,  the Lax-Friedrichs numerical flux is given by:
\bb\l{eq:lax}
\bar{f}^\a(t_n,x_i,y,\Delta U^n_i,k)\coloneqq f^\a(t_n,x_i,y,\sigma_r^{\a}(t_n,x_i)^T\f{\Delta U^n_i}{2h},k)+\sum_{l=1}^d \f{\theta}{\lambda}\bigg( \f{\Delta^{(l)}_+U^n_i -\Delta^{(l)}_-U^n_i}{h}\bigg),
\ee
where  $\lambda=\Delta t/h$ and $\theta>0$ is a prescribed parameter.

With all these spatial approximations in hand, we are ready to write the fully-discrete scheme for \eqref{eq:hjb_penal}. We shall adopt an implicit timestepping for the local term $A^\a_{r,h}$ and an explicit timestepping for the nonlocal term $K^\a_{r,h}$. This enables us to enjoy a less restrictive stability condition than that for  fully explicit schemes and avoid solving the dense system resulting from the integral operator. For the nonlinear terms,  we shall perform implicit timestepping for the $u$ term and explicit timestepping for $Du$ and $B^au$. As we will see later, by taking advantage of the monotonicity of the driver and the penalty term on $u$, our scheme can ensure stability with a less restrictive time stepsize, especially for a large penalty parameter $\rho$. 
Therefore, our semi-implicit scheme shall read as: $U^0_i=g(x_i)$ for all $i\in \Z^d$ and for any given $n=0,\ldots,N-1$:
\begin{align}\l{eq:hjbi_d}
0&=G_h(t_{n+1},x_i,U_i^{n+1},\{U^{b+1}_a\}_{(a,b)\not=(i,n)})\\
&=
\inf_{\a\in \bA}\bigg(\f{U^{n+1}_i-U^n_i}{\Delta t}-A^{\a}_{r,h} U^{n+1}_i-K^\a_{r,h}U^n_i-\tilde{f}^{\a}(t_n,x_i,U^{n+1}_i,\Delta U_i^n,B^{\a}_{r,h} U^n_i)\bigg), \q i\in\Z^d, \nb
\end{align}
where we wrote $\tilde{f}^\a(t,x,y,z,k)= \bar{f}^\a(t,x,y,z,k)+\rho(\zeta(t,x)-y)^+$ with $\bar{f}$ defined as in \eqref{eq:lax}.

\subsection{Well-posedness and convergence analysis}\l{sec:scheme_analysis}

In this section, we shall establish the well-posedness of  the discrete equation \eqref{eq:hjbi_d} and perform its convergence analysis, 
which subsequently leads us to a constructive proof for the existence of bounded solutions to the penalized equation \eqref{eq:hjb_penal} and the HJBVI \eqref{eq:hjbvi}. 
We emphasize that the non-Lipschitz dependence of $f$ on $y$ requires novel analysis techniques for the well-posedness and stability of schemes, which are essentially different from the fixed-point arguments in most existing works (see e.g. \cite{biswas2010diff,debrabant2012}). We remark that throughout this section we shall assume without loss of generality that $f$ is strictly monotone in $y$ with $\mu<0$ (see Remark \ref{remark:strict_monotone}).

We start by recalling several important properties of the Lax-Friedrichs numerical flux for Lipschitz continuous Hamiltonian, which have been established in \cite{crandall1984} and are essential for the subsequent analysis.  
\begin{Lemma}\l{lemma:lax}
Let $\bar{f}$ as in \eqref{eq:lax} and $(t,x,u,k)\in \bar{\cQ}_T\t \R^2$, and suppose  Assumption \ref{assum:mono} and the condition $\theta>C(\sup_{\a\in \bA}|\sigma^\a|_0)\lambda$ hold, where $C$ is the Lipschitz constant of the driver $f$ in Assumption \ref{assum:mono}.
\begin{enumerate}[(1)]
\item (Consistency.) For any  test function $\phi\in C^{1,2}(\bar{\cQ}_T)$, we have
\bb\l{eq:f_consistent}
|\bar{f}^{\a}(t,x,u,  \Delta \phi^{n}_{j,i},k)-\bar{f}^{\a}(t,x,u, D\phi(\x^{n}_{j,i}),k)|\le O(h^2/\Delta t).
\ee
\item (Monotonicity.) If $U^n_{i}\ge V^n_{i}$ for all $i,n$, then we have
\bb\l{eq:lax_mono}
\Delta t\bar{f}^{\a}(t,x,u,  \Delta U^{n}_{i},k)+2d\theta U_{i}^n\ge \Delta t\bar{f}^{\a}(t,x,u,  \Delta V^{n}_{i},k)+2d\theta V_{i}^n.
\ee
\item (Stability.) For any bounded functions $U$ and $V$, we have
$$|(\Delta t\bar{f}^{\a}(t,x,u,  \Delta V^{n}_{i},k)+2d\theta V_{i}^n)
-( \Delta t\bar{f}^{\a}(t,x,u,  \Delta U^{n}_{i},k)+2d\theta U_{i}^n)|
\le 2d\theta|U-V|_0.$$
\end{enumerate}

\end{Lemma}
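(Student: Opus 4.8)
The plan is to verify the three items separately, following the classical treatment of Lax--Friedrichs fluxes in \cite{crandall1984} and exploiting two elementary identities for the flux \eqref{eq:lax}. First I would record that, after multiplying \eqref{eq:lax} by $\Delta t$ and using $\Delta t\,\tfrac{\theta}{\lambda}\,\tfrac1h=\theta$ (since $\lambda=\Delta t/h$) together with $\Delta^{(l)}_+U^n_i-\Delta^{(l)}_-U^n_i=U^n_{i+e_l}+U^n_{i-e_l}-2U^n_i$, where $U^n_{i\pm e_l}$ denote the $2d$ nearest-neighbour values of $U^n$ at $x_i$, one obtains
\[
\cS^\a[U]\coloneqq\Delta t\,\bar f^{\a}(t,x,u,\Delta U^n_i,k)+2d\theta U^n_i
=\Delta t\, f^{\a}\!\Big(t,x,u,(\sigma_r^{\a})^T\tfrac{\Delta U^n_i}{2h},k\Big)+\theta\sum_{l=1}^d\big(U^n_{i+e_l}+U^n_{i-e_l}\big).
\]
The right-hand side does not involve the self-value $U^n_i$, its $f^\a$-argument $(\sigma_r^\a)^T\tfrac{\Delta U^n_i}{2h}$ is the standard central-difference approximation of $(\sigma_r^\a)^TD\phi$, and the map obeys the shift identity $\cS^\a[U+c\mathbf 1]=\cS^\a[U]+2d\theta c$ for any constant $c$, because constant grid functions have vanishing first differences. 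These two facts will drive the entire argument.

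For consistency (1) I would substitute a smooth test function $\phi$ and Taylor-expand about $\x^n_{j,i}$. The central difference approximates the gradient to second order, $|\tfrac{\Delta\phi^n_{j,i}}{2h}-D\phi(\x^n_{j,i})|\le Ch^2|D^3\phi|_0$, so by the Lipschitz continuity of $f^\a$ in its gradient slot (Assumption \ref{assum:mono}) and the bound $|\sigma^\a_r|_0\le C$ — which holds since $\|\sigma^\a\|_1\le C$ and $\int_{|e|<1}|e|^2\,\nu(de)<\infty$ under \eqref{eq:density} with $\kappa<2$ — the $f^\a$-term of \eqref{eq:lax} differs from $\bar f^\a(t,x,u,D\phi(\x^n_{j,i}),k)$ by $O(h^2)$. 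The artificial-viscosity term $\sum_l\tfrac{\theta}{\lambda}\tfrac{\Delta^{(l)}_+\phi-\Delta^{(l)}_-\phi}{h}$ equals $\tfrac{\theta h^2}{\Delta t}\sum_l\partial_{ll}\phi$ up to higher-order terms (using $\tfrac{\theta}{\lambda h}=\tfrac{\theta}{\Delta t}$ and $\Delta^{(l)}_+\phi-\Delta^{(l)}_-\phi=h^2\partial_{ll}\phi+O(h^4)$), hence is $O(h^2/\Delta t)$; since $\Delta t\le T$ the two bounds combine to \eqref{eq:f_consistent}.

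For monotonicity (2) it suffices, by the identity above, to show that $\cS^\a[U]$ is non-decreasing in each neighbour value $U^n_{i\pm e_l}$: raising one of them by $\delta\ge0$ increases the viscosity part by exactly $\theta\delta$ and perturbs $(\sigma_r^\a)^T\tfrac{\Delta U^n_i}{2h}$ by a vector of norm at most $|\sigma^\a_r|_0\,\delta/(2h)$, hence changes $\Delta t\,f^\a(\cdots)$ by at most $\tfrac12 C|\sigma^\a_r|_0\lambda\,\delta$ by Lipschitz continuity; under the CFL-type condition on $\theta$ (one needs $\theta$ to exceed a fixed multiple of $C|\sigma^\a_r|_0\lambda$, which the stated hypothesis covers since $|\sigma^\a_r|_0$ is controlled by the data) the net change is nonnegative, giving \eqref{eq:lax_mono}. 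Stability (3) then follows formally from (2) and the shift identity: with $M\coloneqq|U-V|_0$ one has $V-M\mathbf 1\le U\le V+M\mathbf 1$ componentwise, so by monotonicity $\cS^\a[V]-2d\theta M=\cS^\a[V-M\mathbf 1]\le\cS^\a[U]\le\cS^\a[V+M\mathbf 1]=\cS^\a[V]+2d\theta M$, i.e.\ $|\cS^\a[U]-\cS^\a[V]|\le 2d\theta|U-V|_0$.

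The only genuine obstacle I anticipate is the coefficient bookkeeping in step (2): one must check that the added numerical viscosity strictly dominates, uniformly in $\a\in\bA$, the first-order contribution of $f^\a$ transmitted through the central difference — which may be large, as $\lambda$ is not assumed small, and which is precisely where the condition on $\theta$ enters and then propagates automatically to (3). Everything else reduces to Taylor expansion and the shift identity. I would also remark that (1)--(3) are recorded for general monotone Lipschitz Hamiltonians in \cite{crandall1984}, so the lemma can alternatively be cited verbatim; I include the short self-contained argument for the reader's convenience.
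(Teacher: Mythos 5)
Your proposal is correct, and it actually supplies more than the paper does: the paper offers no proof of this lemma at all, simply recording that these properties of the Lax--Friedrichs flux ``have been established in \cite{crandall1984}''. Your self-contained argument is the standard one and it is sound. The normalization $\cS^\a[U]\coloneqq\Delta t\,\bar f^{\a}(t,x,u,\Delta U^n_i,k)+2d\theta U^n_i=\Delta t\,f^{\a}(\cdots)+\theta\sum_{l}(U^n_{i+e_l}+U^n_{i-e_l})$ correctly eliminates the self-value $U^n_i$ (both the central difference $\Delta U^n_i$ and the second-difference viscosity term, after the cancellation $\Delta t\cdot\tfrac{\theta}{\lambda h}=\theta$, leave only neighbour values); monotonicity in each neighbour reduces to comparing the exact gain $\theta\delta$ from the viscosity part against the loss of at most $\tfrac12 C|\sigma^\a_r|_0\lambda\,\delta$ transmitted through the Lipschitz dependence of $f^\a$ on its gradient slot; and deriving stability from monotonicity plus the shift identity $\cS^\a[U+c\mathbf 1]=\cS^\a[U]+2d\theta c$ is the right move --- a direct Lipschitz estimate would \emph{not} yield the clean constant $2d\theta$. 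Two minor caveats, both of which are looseness inherited from the paper's statement rather than gaps in your argument: the $O(h^2)$ bound for the central first difference needs $\phi$ three times differentiable in space, but with merely $C^{1,2}$ test functions one still gets $O(h)=O(\lambda\, h^2/\Delta t)\subseteq O(h^2/\Delta t)$ since $\lambda$ is bounded by the CFL condition, so \eqref{eq:f_consistent} holds regardless; and the flux \eqref{eq:lax} is built from $\sigma^\a_r$ while the hypothesis bounds $\theta$ via $\sigma^\a$ --- you rightly flag that one must check $|\sigma^\a_r|_0\le 2|\sigma^\a|_0$ (available from \eqref{eq:small_diff} for small $r$ under Assumption \ref{assum:density}, using the factor-of-two slack between the needed $\tfrac12 C|\sigma^\a_r|_0\lambda$ and the assumed $C|\sigma^\a|_0\lambda$), which is precisely the bookkeeping the paper glosses over.
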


The next proposition presents the monotonicity of the scheme \eqref{eq:hjbi_d}, which plays an important role in the stability and convergence analysis of the discrete equation.  
The proof is an extension of the standard case allowing for a potentially non-Lipschitz, monotone nonlinearity of the driver in $u$ and nonlinearity in the jump term, and we include it for the convenience of the reader.

\begin{Proposition}\l{prop:mono}
Under Assumptions \ref{assum:mono} and \ref{assum:density}, the discrete equation \eqref{eq:hjbi_d} is monotone, i.e., it holds for any functions $U^{n+1}$ and $X_i^n\ge Y_i^n$, $\fa i, n$, that
$$G_h(t_{n+1},x_i,U_i^{n+1},\{X^{b+1}_a\}_{(a,b)\not=(i,n)})\le G_h(t_{n+1},x_i,U_i^{n+1},\{Y^{b+1}_a\}_{(a,b)\not=(i,n)}),$$
provided that 
the following CFL conditions are satisfied:
\bb\l{eq:cfl}
1-\Delta t \Gamma_1(r,\kappa)-2d\theta \ge 0, \q \theta>C(\sup_{\a\in \bA}|\sigma^\a|_0)\lambda,
\ee
where $\Gamma_1$ is defined in \eqref{eq:gamma1},
and  $C$ is the Lipschitz constant of the driver $f$ in Assumption \ref{assum:mono}.
\end{Proposition}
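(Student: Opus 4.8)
The plan is to verify monotonicity of $G_h$ term by term, treating the ``explicit'' arguments $\{X^{b+1}_a\}_{(a,b)\neq(i,n)}$ — which for the scheme \eqref{eq:hjbi_d} means the previous time level $U^n_\cdot$ and the current spatial neighbours other than $x_i$ — and showing that replacing them by smaller values $\{Y^{b+1}_a\}$ cannot decrease $G_h$. Fix $\a\in\bA$ and compare the $\a$-summand of $G_h$ evaluated at $X$ with that evaluated at $Y$; since taking the infimum over $\a$ preserves the inequality, it suffices to work at fixed $\a$. Write the $\a$-summand, after multiplying through by $\Delta t>0$, as
\bbn
U^{n+1}_i - U^n_i - \Delta t\,A^{\a}_{r,h}U^{n+1}_i - \Delta t\,K^\a_{r,h}U^n_i - \Delta t\,\tilde f^\a(t_n,x_i,U^{n+1}_i,\Delta U^n_i,B^\a_{r,h}U^n_i).
\een
Of these, only the terms involving the previous level $U^n$ and the finite differences $\Delta U^n_i$ depend on the explicit arguments (the term $-\Delta t\,A^\a_{r,h}U^{n+1}_i$ involves neighbours at level $n+1$, which among the explicit data are $U^{n+1}_a$ for $a\neq i$, so it must be handled too — the key point is that $l^{\a,n}_{r,h,j,i}\ge0$ makes $-\Delta t\,A^\a_{r,h}$ act as $-\Delta t\sum_j l^{\a,n}_{r,h,j,i}[U^{n+1}_j - U^{n+1}_i]$, which is nonincreasing in each $U^{n+1}_j$, $j\neq i$).

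The core estimate is as follows. Using $X^n_\cdot\ge Y^n_\cdot$ everywhere, I would bound the difference of the two $\a$-summands from below. The term $-\Delta t K^\a_{r,h}U^n_i = -\Delta t\sum_{j\neq0}k^{\a,n}_{r,h,j,i}[U^n_{i+j}-U^n_i]$, together with the ``$+U^n_i$'' from the time-difference and the handling of $\Delta U^n_i$, must be organized so the net coefficient of each $U^n_a$ is nonpositive for $a\neq i$ and the coefficient of $U^n_i$ has the right sign. Here is where Lemma \ref{lemma:lax}(2) enters: the Lax–Friedrichs flux is monotone only after adding the artificial term $2d\theta U^n_i$, so I would add and subtract $2d\theta U^n_i/\Delta t$ and invoke \eqref{eq:lax_mono} to control $\Delta t\bar f^\a(t_n,x_i,\cdot,\Delta U^n_i,\cdot)$. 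For the nonlinear jump dependence, I use that $k\mapsto f^\a(t,x,y,z,k)$ is non-decreasing together with the monotone form of $B^\a_{r,h}$: since $m$ is non-decreasing and $\gamma\ge0$, $B^\a_{r,h}$ is monotone in the explicit arguments, and composing a non-decreasing function with it preserves this; the Remark after Assumption \ref{assum:mono} ($\psi(x)-\psi(y)\le C(x-y)^+$ for Lipschitz non-decreasing $\psi$) gives the quantitative Lipschitz-type bound needed to absorb these contributions. Collecting: the total coefficient multiplying the ``diagonal'' explicit value works out to be $1 - \Delta t\Gamma_1(r,\kappa) - 2d\theta$, and \eqref{eq:sumkb} bounds $\sum_j k^{\a,n}_{r,h,j,i}\le\Gamma_1(r,\kappa)$, so the first CFL condition in \eqref{eq:cfl} is exactly what makes this coefficient nonnegative; the second CFL condition $\theta>C(\sup_\a|\sigma^\a|_0)\lambda$ is precisely the hypothesis of Lemma \ref{lemma:lax}. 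Note the penalty term $\rho(\zeta(t_n,x_i)-U^{n+1}_i)^+$ and the monotone-in-$y$ nonlinearity $f^\a(\cdot,U^{n+1}_i,\cdot)$ involve only the implicit unknown $U^{n+1}_i$, which is held fixed, so they contribute nothing to this comparison — this is the reason, advertised in the text, that the stability restriction does not degrade as $\rho\to\infty$.

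The main obstacle will be the bookkeeping for the Lax–Friedrichs term: $\bar f^\a$ depends on the explicit data both through the central difference $\Delta U^n_i$ inside the argument of $f^\a$ and through the numerical-viscosity term $\sum_l\frac{\theta}{\lambda}\frac{\Delta^{(l)}_+U^n_i-\Delta^{(l)}_-U^n_i}{h}$, and one has to check that after adding $2d\theta U^n_i$ the resulting map is monotone in each neighbour $U^n_{i\pm e_l}$ and in $U^n_i$ — this is exactly Lemma \ref{lemma:lax}(2), so the real work is to apply it cleanly while simultaneously tracking the $K^\a_{r,h}$ contribution and the jump term, ensuring no double-counting of the $2d\theta$ and that all cross terms have the correct sign. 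Once the inequality $\Delta t\,\bar f^\a(t_n,x_i,u,\Delta X^n_i,k_X)+2d\theta X^n_i\ge \Delta t\,\bar f^\a(t_n,x_i,u,\Delta Y^n_i,k_Y)+2d\theta Y^n_i$ (with $k_X\ge k_Y$ handled via monotonicity in $k$) is in place, the remaining assembly — combining it with nonnegativity of $k^{\a,n}_{r,h,j,i}$, $l^{\a,n}_{r,h,j,i}$, $b^{\a,n}_{r,h,j,i}$ and the CFL bound $1-\Delta t\Gamma_1(r,\kappa)-2d\theta\ge0$ — is routine, and taking $\inf_{\a\in\bA}$ of both sides completes the proof.
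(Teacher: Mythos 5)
Your overall architecture matches the paper's proof: fix $\a$, use $\inf_\a S-\inf_\a T\le\sup_\a(S-T)$, exploit $l^{\a,n+1}_{r,h,j,i},k^{\a,n}_{r,h,j,i}\ge0$ for the off-diagonal level-$(n+1)$ and level-$n$ neighbours, invoke Lemma \ref{lemma:lax}(2) after adding $2d\theta U^n_i$, and note that the penalty term and the $y$-dependence of $f$ only see the fixed implicit unknown $U^{n+1}_i$. However, there is one genuine gap at the crux: your claim that ``$B^\a_{r,h}$ is monotone in the explicit arguments'' and the ensuing plan to handle the jump nonlinearity ``with $k_X\ge k_Y$ handled via monotonicity in $k$'' are not correct. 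From \eqref{eq:Bh}, $B^\a_{r,h}\phi(t_n,x_i)$ depends on the diagonal value $\phi(t_n,x_i)$ with a \emph{negative} sign inside the non-decreasing function $m$, so it is anti-monotone in $U^n_i$ (exactly like $K^\a_{r,h}$), and $X\ge Y$ does \emph{not} imply $B^\a_{r,h}X^n_i\ge B^\a_{r,h}Y^n_i$. Consequently the difference $f^\a(\ldots,B^\a_{r,h}X^n_i)-f^\a(\ldots,B^\a_{r,h}Y^n_i)$ can be negative and cannot be discarded by monotonicity of $f$ in $k$ alone.

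The missing step, which is the main content of the paper's proof, is the lower bound for this difference in the unfavourable case $B^\a_{r,h}X^n_i< B^\a_{r,h}Y^n_i$: combining Lipschitz continuity of $f$ in $k$ with $m(x)-m(y)\le C(x-y)^+$ and $\sum_j\om_j=1$, one gets
$$f^\a(\ldots,B^\a_{r,h}Y^n_i)-f^\a(\ldots,B^\a_{r,h}X^n_i)\le C\sum_{j\in\cA_i}b^{\a,n}_{r,h,j,i}\big[(X^n_i-Y^n_i)-(X^n_{i+j}-Y^n_{i+j})\big],$$
with $\cA_i=\{j: Y^n_{i+j}-X^n_{i+j}>Y^n_i-X^n_i\}$. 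This produces an additional negative multiple of $(X^n_i-Y^n_i)$ that must be absorbed into the diagonal budget, so the sufficient condition is really $1-\Delta t(\sum_jk^{\a,n}_{r,h,j,i}+C\sum_jb^{\a,n}_{r,h,j,i})-2d\theta\ge0$, reduced to \eqref{eq:cfl} via \eqref{eq:sumkb} and $\Gamma_2(r,\kappa)\le\Gamma_1(r,\kappa)$ for small $r$. You do name the right tool ($\psi(x)-\psi(y)\le C(x-y)^+$) and list $b^{\a,n}_{r,h,j,i}\ge0$ among your ingredients, so the fix is local, but as written your diagonal coefficient $1-\Delta t\Gamma_1-2d\theta$ omits the $b$-contribution and the argument for the jump term would fail.
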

\begin{proof}
Let $X_i^n\ge Y_i^n$, $i\in \Z^d$, $n=0,\ldots, N-1$, we can deduce from  the inequality  $\inf_{\a}S-\inf_{\a}T\le \sup_\a (S-T)$ and  \eqref{eq:Ah} that it suffices to establish for any given $\x_i^n=(t_n,x_i)$ that
\begin{align*}
X^n_i+&\Delta t \sum_{j\not=i}l^{\a,n+1}_{r,h,j,i}X^{n+1}_j+\Delta t K^\a_{r,h}X^n_i+\Delta t\bar{f}^{\a}(\x^n_i,U^{n+1}_i,\Delta X_i^n,B^{\a}_{r,h} X^n_i)\\
&-
\big[Y^n_i+\Delta t \sum_{j\not=i}l^{\a,n+1}_{r,h,j,i} Y^{n+1}_j+ \Delta t K^\a_{r,h}Y^n_i+\Delta t\bar{f}^{\a}(\x^n_i,U^{n+1}_i,\Delta Y_i^n,B^{\a}_{r,h} Y^n_i)\big]\ge 0,
\end{align*}
which by using \eqref{eq:Kh} and the fact that $l^{\a,n+1}_{r,h,j,i},k^{\a,n+1}_{r,h,j,i}\ge 0$ can be reduced to showing
\begin{align}
(1&-\Delta t \sum_{j\not = 0}k^{\a,n+1}_{r,h,j,i}-2d\theta )(X^n_i-Y^n_i)+\Delta t[\bar{f}^{\a}(\x^n_i,U^{n+1}_i,\Delta X_i^n,B^{\a}_{r,h} X^n_i)-\bar{f}^{\a}(\x^n_i,U^{n+1}_i,\Delta X_i^n,B^{\a}_{r,h} Y^n_i)]\nb\\
+&\Delta t[\bar{f}^{\a}(\x^n_i,U^{n+1}_i,\Delta X_i^n,B^{\a}_{r,h} Y^n_i)+2d\theta X_i^n-\bar{f}^{\a}(\x^n_i,U^{n+1}_i,\Delta Y_i^n,B^{\a}_{r,h} Y^n_i)-2d\theta Y_i^n]\ge 0. \l{eq:scheme_mono_1}
\end{align}
Suppose 
the condition $\theta>C(\sup_{\a\in \bA}|\sigma^\a|_0)\lambda$ is satisfied,
we can then deduce from the monotonicity \eqref{eq:lax_mono} and the definition \eqref{eq:lax} of the numerical flux $\bar{f}$ that it remains to obtain a lower bound of 
$f^{\a}(t,x,u,z,B^{\a}_{r,h} X^n_i)-f^{\a}(t,x,u,z,B^{\a}_{r,h} Y^n_i)$.

Since $f$ is non-decreasing in $k$, we shall assume $B^{\a}_h X^n_i\le B^{\a}_h Y^n_i$, otherwise the lower bound is $0$. 
Using the Lipschitz continuity of $f$ on $k$, we obtain for any $\x_i^n=(t_n,x_i)$  that
\begin{align*}
&f^{\a}(t,x,u,z,B^{\a}_{r,h} Y^n_i)-f^{\a}(t,x,u,z,B^{\a}_{r,h} X^n_i)\le C(B^{\a}_{r,h} Y^n_i-B^{\a}_{r,h} X^n_i)\\
=&\ C\int_{|e|>r}\!\bigg[m\big(\sum_{j\not =0}\om_j(\eta^{\a} (\x^n_i,e);h)[Y^n_{i+j}-Y^n_{i}]\big)\!-\!m\big(\sum_{j\not =0}\om_j(\eta^{\a} (\x^n_i,e);h)[X^n_{i+j}-X^n_{i}]\big)\bigg]\gamma(\x^n_i,e)\,\nu(de)\\
\le &\ C\int_{|e|>r}\!\bigg[\sum_{j\not =0}\om_j(\eta^{\a} (\x^n_i,e);h)[(Y^n_{i+j}-X^n_{i+j})-(Y^n_{i}-X^n_{i})]\bigg]^+\gamma(\x^n_i,e)\,\nu(de)\\
\le &\ C\sum_{j\not =0}\int_{|e|>r}\om_j(\eta^{\a} (\x^n_i,e);h)\gamma(\x^n_i,e)\,\nu(de)\bigg[[(Y^n_{i+j}-X^n_{i+j})-(Y^n_{i}-X^n_{i})]\bigg]^+\\
=&\ C\sum_{j\in \cA_{i}}b_{r,h,j,i}^{\a,n}[(X^n_{i}-Y^n_{i})-(X^n_{i+j}-Y^n_{i+j})],
\end{align*}
with   $b_{r,h,j,i}^{\a,n}$ defined as in \eqref{eq:brh}  and the index set $\cA_i\coloneqq\{j\in \Z^d\mid Y^n_{i+j}-X^n_{i+j}> Y^n_{i}-X^n_{i}\}$.
Therefore, the above estimate together with \eqref{eq:scheme_mono_1} implies that the discrete equation \eqref{eq:hjbi_d} is monotone provided that
\bb
1-\Delta t \bigg(\sum_{j\not = 0}k^{\a,n+1}_{r,h,j,i}+C\sum_{j\not = 0}b^{\a,n}_{r,h,j,i}\bigg)-2d\theta \ge 0,
\ee
which along with the estimate \eqref{eq:sumkb} and the fact $\Gamma_1(r,\kappa)>\Gamma_2(r,\kappa)$ for small enough $r$ lead us to the desired CFL condition \eqref{eq:cfl}.
\end{proof}


The following proposition establishes a discrete comparison principle for \eqref{eq:hjbi_d}, which subsequently implies 
the uniqueness of the solution to the discrete equation \eqref{eq:hjbi_d}. 
\begin{Proposition}\l{prop:unique}
Suppose  Assumption \ref{assum:mono}  holds. 
Let $\{X^{n+1}_i\}_i$ and $\{Y^{n+1}_i\}_i$ be two bounded functions such that
$$G_h(t_{n+1},x_i,X_i^{n+1},\{X_a^{n+1},U^{n}\}_{a\not=i})\le G_h(t_{n+1},x_i,Y_i^{n+1},\{Y_a^{n+1},U^{n}\}_{a\not=i}),\q \fa i\in\Z^d,$$
then we have $X^{n+1}_i\le Y^{n+1}_i$ for all $i\in \Z^d$. Consequently, 
the discrete equation \eqref{eq:hjbi_d} admits at most one bounded solution. \end{Proposition}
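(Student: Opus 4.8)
The plan is to argue by contradiction, adapting the structure of the proof of Proposition~\ref{prop:mono} so as to also track the dependence of $G_h$ on the \emph{diagonal} value $U^{n+1}_i$. Write $W_i\coloneqq X^{n+1}_i-Y^{n+1}_i$ and $M\coloneqq\sup_{i\in\Z^d}W_i$, which is finite because $X^{n+1}$ and $Y^{n+1}$ are bounded, and suppose for contradiction that $M>0$. First I would fix small parameters $\eps,\delta>0$ (to be sent to $0$ at the end) and choose $i_0\in\Z^d$ with $W_{i_0}\ge M-\eps$. Evaluating the hypothesis at $x_{i_0}$ gives $\inf_\a S_\a\le\inf_\a T_\a$, where $S_\a$ (resp.~$T_\a$) denotes the bracketed expression inside the infimum in \eqref{eq:hjbi_d} evaluated with the layer $X^{n+1}$ (resp.~$Y^{n+1}$) and the \emph{common} previous layer $U^n$; hence there is a $\delta$-optimal control $\a^*=\a^*(i_0,\delta)\in\bA$ with $S_{\a^*}-T_{\a^*}\le\delta$.

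The crucial observation is that the semi-implicit splitting makes the layer $U^{n+1}$ enter $G_h$ only through $U^{n+1}_i/\Delta t$, through $A^\a_{r,h}U^{n+1}_i$, and through the $y$-slot of $\tilde f^\a$, so that in the difference $S_{\a^*}-T_{\a^*}$ every quantity involving only the frozen layer $U^n$ cancels exactly: this is the case for $K^{\a^*}_{r,h}U^n_{i_0}$, for the increment $\Delta U^n_{i_0}$ feeding the Lax--Friedrichs flux \eqref{eq:lax}, for the numerical-viscosity sum $\sum_{l}\tfrac{\theta}{\lambda}(\cdots)$ therein, and for $B^{\a^*}_{r,h}U^n_{i_0}$. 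What survives is
\[
S_{\a^*}-T_{\a^*}=\frac{W_{i_0}}{\Delta t}-\big(A^{\a^*}_{r,h}X^{n+1}_{i_0}-A^{\a^*}_{r,h}Y^{n+1}_{i_0}\big)-\big(f^{\a^*}(\cdot,X^{n+1}_{i_0},\cdot)-f^{\a^*}(\cdot,Y^{n+1}_{i_0},\cdot)\big)-\rho\big((\zeta-X^{n+1}_{i_0})^+-(\zeta-Y^{n+1}_{i_0})^+\big),
\]
with the two $f^{\a^*}$ evaluations sharing the same (frozen-layer) second and third arguments. I would then bound each term from below. We have $W_{i_0}/\Delta t\ge(M-\eps)/\Delta t$; by \eqref{eq:Ah}, the nonnegativity of $l^{\a^*,n+1}_{r,h,j,i_0}$, and $W_j-W_{i_0}\le\eps$ for all $j$, the $A$-difference is at most $\eps\,C_h$ with $C_h\coloneqq\sup_{\a,i}\sum_j l^{\a,n+1}_{r,h,j,i}<\infty$ for fixed mesh parameters; since $W_{i_0}>0$ once $\eps<M$, the monotonicity estimate \eqref{eq:y_mono} (with $\mu<0$, as assumed throughout this section) gives $f^{\a^*}(\cdot,X^{n+1}_{i_0},\cdot)-f^{\a^*}(\cdot,Y^{n+1}_{i_0},\cdot)\le\mu W_{i_0}$, hence $-\big(f^{\a^*}(\cdot,X^{n+1}_{i_0},\cdot)-f^{\a^*}(\cdot,Y^{n+1}_{i_0},\cdot)\big)\ge|\mu|(M-\eps)$; and since $y\mapsto-(\zeta-y)^+$ is nondecreasing and $X^{n+1}_{i_0}\ge Y^{n+1}_{i_0}$, the penalty term contributes a nonnegative amount as $\rho\ge0$. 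Adding these, $S_{\a^*}-T_{\a^*}\ge(M-\eps)\big(\tfrac1{\Delta t}+|\mu|\big)-C_h\,\eps$.

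Sending $\eps\downarrow0$ and then $\delta\downarrow0$, the left-hand side is at most $\delta$, which tends to $0$, while the right-hand side tends to $M\big(\tfrac1{\Delta t}+|\mu|\big)>0$, a contradiction; therefore $M\le0$, i.e.~$X^{n+1}_i\le Y^{n+1}_i$ for all $i$. The uniqueness claim then follows by applying this to two bounded solutions of \eqref{eq:hjbi_d} sharing the same previous layer $U^n$, which yields both inequalities and hence equality at layer $n+1$, and by inducting over $n$ starting from the prescribed layer $U^0=g$. I expect the only genuinely delicate point to be the bookkeeping that isolates the diagonal term --- one must carefully check that \emph{every} argument of $G_h$ other than the value at $(i_0,n+1)$ agrees on the two sides, which is exactly where the semi-implicit treatment of $K^\a$, $Du$ and $B^\a u$ in \eqref{eq:hjbi_d} is used, and that the weight sum $C_h$ is finite (true for the standard Kushner or semi-Lagrangian stencils on a fixed mesh). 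The strict monotonicity $\mu<0$ is what makes the coefficient $\tfrac1{\Delta t}+|\mu|$ positive for every $\Delta t>0$, so that, consistently with the statement, no CFL-type restriction on $\Delta t$ is needed for the comparison principle itself.
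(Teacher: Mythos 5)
Your proof is correct and follows essentially the same route as the paper's: argue by contradiction at an $\eps$-maximizer of $X^{n+1}-Y^{n+1}$, exploit that the frozen layer $U^n$ cancels, that the off-diagonal weights $l^{\a,n+1}_{r,h,j,i}$ are nonnegative with finite row sums, and that $f$ and the penalty term are monotone in the diagonal argument. The only cosmetic difference is that you bound the linear difference $S_{\a^*}-T_{\a^*}$ directly via a $\delta$-optimal control, whereas the paper multiplies by $X_i^{n+1}-Y_i^{n+1}$ and derives $(m-\eps)^2\le O(\eps)$; both hinge on the same ingredients and neither needs the CFL condition.
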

\begin{proof}
We shall consider the  quantity $m=\sup_i (X^{n+1}_i- Y^{n+1}_i)$.
Since the desired result holds if $m\le 0$, we shall assume $m>0$, which implies  for each small enough $\eps$, there exists an index $i\in \Z^d$   such that
$$
0<m-\eps<X^{n+1}_i-Y^{n+1}_i\le m,
$$
from which we can deduce that $\{X^{n+1}_i\}_i$ and $\{Y^{n+1}_i\}_i$ satisfy the following inequality:
\begin{align*}
&(X_i^{n+1}-Y_i^{n+1})[(X_i^{n+1}-U_i^{n})+\Delta t \inf_{\a\in \bA}\big(-A^{\a}_{r,h} X^{n+1}_i- K^\a_{r,h}U^n_i-\tilde{f}^{\a}(t_n,x_i,X^{n+1}_i,\Delta U_i^n,B^{\a}_{r,h} U^n_i)\big)\\
&-(Y_i^{n+1}-U_i^{n})-\Delta t \inf_{\a\in \bA}\big(-A^{\a}_{r,h} Y^{n+1}_i-K^\a_{r,h}U^n_i-\tilde{f}^{\a}(t_n,x_i,Y^{n+1}_i,\Delta U_i^n,B^{\a}_{r,h} U^n_i)\big)]\le 0,
\end{align*}
which together with the inequality  $-(\inf_{\a}S-\inf_{\a}T)\le -\inf_\a (S-T)$ implies that
\begin{align*}
|X_i^{n+1}-Y_i^{n+1}|^2&\le -\Delta t \inf_{\a\in \bA}\big\{(X_i^{n+1}-Y_i^{n+1})[-(A^{\a}_{r,h} X^{n+1}_i-A^{\a}_{r,h}Y^{n+1}_i)\nb\\
&\hspace{-0.6 cm} -\big(\tilde{f}^{\a}(t_n,x_i,X^{n+1}_i,\Delta U_i^n,B^{\a}_{r,h} U^n_i)-\tilde{f}^{\a}(t_n,x_i,Y^{n+1}_i,\Delta U_i^n,B^{\a}_{r,h} U^n_i)\big)]\big\}\nb\\
&\hspace{-1 cm} \le \Delta t\sup_{\a\in \bA}\big\{-\sum_{m} l^{\a, n+1}_{r,h,m,i}|X_i^{n+1}-Y_i^{n+1}|^2+\sum_{m} l^{\a, n+1}_{r,h,m,i}(X^{n+1}_m-Y^{n+1}_m)(X^{n+1}_i-Y^{n+1}_i)  \nb\\
& \hspace{-0.6 cm} +(X_i^{n+1}-Y_i^{n+1})\big(\tilde{f}^{\a}(t_n,x_i,X^{n+1}_i,\Delta U_i^n,B^{\a}_{r,h} U^n_i)-\tilde{f}^{\a}(t_n,x_i,Y^{n+1}_i,\Delta U_i^n,B^{\a}_{r,h} U^n_i)\big)]\big\}\nb\\
&\hspace{-1 cm} \le \Delta t\sup_{\a\in \bA}\big\{-\sum_{m} l^{\a, n+1}_{r,h,m,i}(m-\eps)^2+\sum_{m} l^{\a, n+1}_{r,h,m,i}m^2\}\nb\le O(\eps),
\end{align*}
and consequently we have $(m-\eps)^2\le O(\eps)$. Letting $\eps\to 0$ yields  $m=0$, which leads  to a contradiction. This completes the proof of the discrete comparison principle for \eqref{eq:hjbi_d}, from which we can directly infer  the uniqueness of bounded solutions to \eqref{eq:hjbi_d}.
\end{proof}

The next result provides an a priori estimate for the solution to \eqref{eq:hjbi_d}. 

\begin{Proposition}\l{prop:apriori}
Suppose  Assumptions \ref{assum:mono} and \ref{assum:density}, and the CFL condition \eqref{eq:cfl} hold. Let $U^{n+1}$ be a bounded solution  to \eqref{eq:hjbi_d},  then we have the following a priori estimate:
\bb\l{eq:apriori}
|U^{n+1}|_0\le \max\{|\zeta(t_n,\cdot)^+|_0, \; |U^n|_0+\Delta t\sup_{\a\in \bA}|f^{\a}(\cdot,\cdot,0,0,0)|_0\}.
\ee
\end{Proposition}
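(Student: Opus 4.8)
The plan is to establish the two one-sided bounds $\sup_i U^{n+1}_i\le\overline U$ and $\inf_i U^{n+1}_i\ge-\overline U$, where $\overline U\coloneqq\max\{|\zeta(t_n,\cdot)^+|_0,\ |U^n|_0+\Delta t\sup_{\a\in\bA}|f^{\a}(\cdot,\cdot,0,0,0)|_0\}$ is the right-hand side of \eqref{eq:apriori}. The idea is to exhibit the spatial constants $\overline U$ and $-\overline U$ as, respectively, a discrete supersolution and a discrete subsolution of \eqref{eq:hjbi_d}, and then invoke the discrete monotonicity (Proposition \ref{prop:mono}) and the discrete comparison principle (Proposition \ref{prop:unique}) already at our disposal; note that in this argument no attainment of $\sup_i U^{n+1}_i$ is needed, since Proposition \ref{prop:unique} already copes with near-maximizers.

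\textbf{Freezing the previous time level.} The scheme operator $G_h$ depends on the node values at level $n$ only through the difference-type quantities $K^{\a}_{r,h}U^n_i$, $\Delta U^n_i$ and $B^{\a}_{r,h}U^n_i$, and by Proposition \ref{prop:mono} it is non-increasing in each of these arguments. I would therefore replace $U^n$ by the constant functions $W^n\equiv|U^n|_0$ and $\underline W^n\equiv-|U^n|_0$: since $W^n\ge U^n\ge\underline W^n$ pointwise, monotonicity gives, for every $i$,
$$G_h\big(t_{n+1},x_i,U^{n+1}_i,\{U^{n+1}_a\}_{a\ne i},W^n\big)\ \le\ 0\ \le\ G_h\big(t_{n+1},x_i,U^{n+1}_i,\{U^{n+1}_a\}_{a\ne i},\underline W^n\big),$$
the middle equality being the definition of $U^{n+1}$. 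Crucially, $A^{\a}_{r,h}$ and the three operators $K^{\a}_{r,h}$, $\Delta$, $B^{\a}_{r,h}$ all annihilate spatial constants — the last because $m(0)=0$ — so for any constant state $c\in\R$ one has $G_h(t_{n+1},x_i,c,\{c\}_{a\ne i},W^n)=\inf_{\a\in\bA}\big(\tfrac{c-|U^n|_0}{\Delta t}-f^{\a}(t_n,x_i,c,0,0)-\rho(\zeta(t_n,x_i)-c)^+\big)$, and similarly with $|U^n|_0$ replaced by $-|U^n|_0$.

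\textbf{Checking the constants.} For $c=\overline U$: in either case of the defining maximum one has $\overline U\ge|\zeta(t_n,\cdot)^+|_0\ge\zeta(t_n,x_i)$, so the penalty term vanishes; moreover $\overline U\ge0$, so the monotonicity of $f$ in $y$ (recall $\mu<0$ throughout this section) gives $f^{\a}(t_n,x_i,\overline U,0,0)\le f^{\a}(t_n,x_i,0,0,0)+\mu\overline U\le\sup_{\a}|f^{\a}(\cdot,\cdot,0,0,0)|_0$, while $\tfrac{\overline U-|U^n|_0}{\Delta t}\ge\sup_{\a}|f^{\a}(\cdot,\cdot,0,0,0)|_0$ follows directly from the definition of $\overline U$; hence $G_h(t_{n+1},x_i,\overline U,\{\overline U\}_{a\ne i},W^n)\ge0$ for all $i$. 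Combined with the left inequality of the previous display, Proposition \ref{prop:unique} applied with the fixed level-$n$ data $W^n$ yields $U^{n+1}_i\le\overline U$. The lower bound is symmetric: taking $c=-\overline U$ and data $\underline W^n$, the penalty term is nonnegative, $\tfrac{-\overline U+|U^n|_0}{\Delta t}\le-\sup_{\a}|f^{\a}(\cdot,\cdot,0,0,0)|_0\le f^{\a}(t_n,x_i,0,0,0)\le f^{\a}(t_n,x_i,-\overline U,0,0)$, so $G_h(t_{n+1},x_i,-\overline U,\{-\overline U\}_{a\ne i},\underline W^n)\le0$, and Proposition \ref{prop:unique} with data $\underline W^n$ gives $-\overline U\le U^{n+1}_i$, which finishes \eqref{eq:apriori}.

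The only delicate point is the freezing step: one must resist estimating $K^{\a}_{r,h}U^n$, $\Delta U^n$ and $B^{\a}_{r,h}U^n$ term by term, which would introduce mesh-dependent constants (through $\Gamma_1(r,\kappa)$ and $1/h$) and destroy the clean bound \eqref{eq:apriori}. Instead one exploits the monotonicity of the scheme in precisely these arguments to reduce the whole statement to the trivial observation that spatial differences annihilate constants together with $m(0)=0$, after which only the elementary two-case analysis of $\overline U$ above remains.
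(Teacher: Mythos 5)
Your proof is correct and takes a genuinely different route from the paper's. The paper argues directly at a near-maximizer: it picks an index $i$ with $|U^{n+1}_i|\ge|U^{n+1}|_0-\eps$, multiplies the scheme identity at that node by $U^{n+1}_i$, and then bounds each resulting term individually — using the non-negativity of the $l^{\a,n+1}_{r,h,j,i}$, the Lax--Friedrichs stability in Lemma~\ref{lemma:lax}, an explicit one-sided bound on $B^\a_{r,h}U^n_i$, and the CFL condition — to close a quadratic inequality $(|U^{n+1}|_0-\eps)^2\le|U^{n+1}|_0(|U^n|_0+\Delta t\sup_\a|f^\a(\cdot,\cdot,0,0,0)|_0)+O(\eps)$, then lets $\eps\to0$. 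You instead observe that the constants $\pm\overline U$ are discrete super- and subsolutions of \eqref{eq:hjbi_d} once the level-$n$ data is frozen at $\pm|U^n|_0$, and then apply the already-proven scheme monotonicity (Proposition~\ref{prop:mono}) and the discrete comparison principle (Proposition~\ref{prop:unique}). The only places where the structure of the scheme enters are (i) that the spatial operators $A^\a_{r,h}$, $K^\a_{r,h}$, $\Delta$ annihilate constants and $B^\a_{r,h}$ does too because $m(0)=0$, and (ii) the monotonicity of $f$ in $y$ with $\mu\le0$. This is cleaner and shorter than the paper's inline estimate; in particular you never need the explicit bounds on $\sum_j k^{\a,n}_{r,h,j,i}$ or $B^\a_{r,h}U^n_i$, nor the stability statement of Lemma~\ref{lemma:lax}, because all of that is already packaged inside Proposition~\ref{prop:mono}. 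The trade-off is that the paper's proof is self-contained at this point (it essentially re-derives a restricted maximum-principle argument), whereas yours depends on the two preceding propositions; since both of those are stated and proved under exactly the same hypotheses (Assumptions~\ref{assum:mono}, \ref{assum:density}, and the CFL condition), the dependence is benign. One cosmetic remark: your phrase ``in either case of the defining maximum'' is unnecessary, since $\overline U\ge|\zeta(t_n,\cdot)^+|_0$ and $\overline U\ge|U^n|_0+\Delta t\sup_\a|f^\a(\cdot,\cdot,0,0,0)|_0$ hold simultaneously by definition of the maximum, so both the vanishing of the penalty term and the time-difference bound hold without case analysis.
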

\begin{proof}
Without loss of generality, we can assume $|U^{n+1}|_0>|\zeta(t_n,\cdot)^+|_0$. Then for any small enough $\eps>0$, we can choose an index $i$ such that 
$|U^{n+1}_i|\ge |U^{n+1}|_0-\eps>|\zeta(t_n,\cdot)^+|_0$.

Let us first assume $U^{n+1}_i>0$,  which implies $\rho(\zeta(t_n,x_i)-U^{n+1}_i)^+=0$.
We  then deduce from \eqref{eq:hjbi_d} and $\mu\le 0$ that 
\begin{align}
|U^{n+1}_i|^2=&\ U^{n+1}_i[U^n_i+\Delta t \sup_{\a\in \bA}\big(A^{\a}_{r,h} U^{n+1}_i+K^\a_{r,h} U^n_i+\bar{f}^{\a}(t_n,x_i,U^{n+1}_i,\Delta U_i^n,B^{\a}_{r,h} U^n_i)\big)]
\nb\\
&\hspace{-1 cm}\le \Delta t \sup_{\a\in \bA}\big[U^{n+1}_i\sum_{m}l^{\a, n+1}_{r,h,m,i}(U^{n+1}_m-U^{n+1}_i)+U^{n+1}_i\big(\bar{f}^{\a}(t_n,x_i,U^{n+1}_i,\Delta U_i^n,B^{\a}_{r,h} U^n_i)
\nb\\
&\hspace{-0.6 cm}-\bar{f}^{\a}(t_n,x_i,0,\Delta U_i^n,B^{\a}_{r,h} U^n_i)\big)]
+U^{n+1}_i[U^n_i+\Delta t \sup_{\a\in \bA}\big(K^\a_{r,h} U^n_i+\bar{f}^{\a}(t_n,x_i,0,\Delta U_i^n,B^{\a}_{r,h} U^n_i)\big)]
\nb\\
&\hspace{-1 cm}\le \Delta t \sup_{\a\in \bA}[\sum_{m}l^{\a, n+1}_{r,h,m,i}|U^{n+1}|_0^2-\sum_{m}l^{\a, n+1}_{h,m,i}(|U^{n+1}|_0-\eps)^2]+U^{n+1}_i \sup_{\a\in \bA}\big\{\Delta t\sum_{j\not=0}k^{\a, n}_{r,h,j,i}[U^n_{i+j}-U^n_i]
\nb\\
&\hspace{-0.6 cm}+U^n_i+[\Delta t\bar{f}^{\a}(t_n,x_i,0,\Delta U_i^n,B^{\a}_h U^n_i)\big)+2d\theta U_i^n
-\Delta t\bar{f}^{\a}(t_n,x_i,0,0,B^{\a}_h U^n_i)]-2d\theta U_i^n
\nb\\
&\hspace{-0.6 cm} +\Delta t\bar{f}^{\a}(t_n,x_i,0,0,B^{\a}_h U^n_i)-\Delta t\bar{f}^{\a}(t_n,x_i,0,0,0)+\Delta t\bar{f}^{\a}(t_n,x_i,0,0,0)\big\}. \l{eq:Ubdd_1}
\end{align}
Since the CFL condition \eqref{eq:cfl} is satisfied, we know the numerical flux $\bar{f}$ is  stable in sup-norm, hence we can use the monotonicity of $f$ on $k$ and bound the above expression by
\begin{align}
O(\eps)&+U^{n+1}_i \sup_{\a\in \bA}\big\{(1-2d\theta -\Delta t\sum_{j\not=0}k^{\a, n}_{r,h,j,i})U^n_i+(\Delta t\sum_{j\not=0}k^{\a, n}_{r,h,j,i}+2d\theta)|U^n|_0\nb\\
&+\Delta t C(B^{\a}_{r,h} U^n_i)^++\Delta t {f}^{\a}(t_n,x_i,0,0,0)\big\}. \l{eq:Ubdd_2}
\end{align}
Note by using the properties of $m$, we can obtain a positive upper bound for $B^{\a}_h U^n_i$:
\begin{align*}
B^{\a}_h U^n_i\le C\!\int_{|e|>r} \!\! \bigg(\sum_{j\not=0}\om_j(\eta^{\a} (t_n,x_i,e);h)(U^n_{i+j}\!-\!U^n_{i})\bigg)^+\!\!\gamma(t_n,x_i,e)\,\nu(de)\le C\sum_{j\not=0}b^{\a, n}_{r,h,j,i}(U^n_{i+j}\!-\!U^n_{i})^+,
\end{align*}
from which, along with the index set $\cA_i=\{j\in \Z^d\mid U^n_{i+j}>U^n_{i}\}$, we can further bound  \eqref{eq:Ubdd_2} by:
\begin{align*}
U^{n+1}_i \sup_{\a\in \bA}\big\{[1-2d\theta - & \Delta t\big(\sum_{j\not=0}k^{\a, n}_{r,h,j,i}+C\sum_{j\in \cA_i}b^{\a, n}_{r,h,j,i}\big)]U^n_i+ \\
& \big[\big(\Delta t\sum_{j\not=0}k^{\a, n}_{r,h,j,i}+C\sum_{j\in \cA_i}b^{\a, n}_{r,h,j,i}\big)+2d\theta\big]|U^n|_0
+\Delta t {f}^{\a}(t_n,x_i,0,0,0)\big\}+O(\eps),
\end{align*}
then by using the CFL condition \eqref{eq:cfl} and the estimate \eqref{eq:Ubdd_1},  we obtain for small enough $r$ that 
\begin{align}\l{eq:a prioir_eps}
(|U^{n+1}|_0-\eps)^2\le |U^{n+1}_i|^2\le |U^{n+1}|_0(|U^n|_0+\Delta t\sup_{\a\in \bA}|{f}^{\a}(\cdot,\cdot,0,0,0)|_0)+O(\eps) .
\end{align}
For the case with $U^{n+1}_i<0$, we have $\rho(\zeta(t_n,x_i)-U^{n+1}_i)^+U^{n+1}_i\le 0$, and hence one can derive the same estimate \eqref{eq:a prioir_eps} similarly by considering $-U^{n+1}$. Then
letting $\eps\to 0$ and dividing both sides by $|U^{n+1}|_0$ give us the desired result \eqref{eq:apriori}.
\end{proof}

The next lemma shows that the discrete equation \eqref{eq:hjbi_d} admits a unique bounded solution provided that the driver $f$ is  Lipschitz in $y$, which has been established in \cite{dumitrescu2018} for $m(x)=x$ by reformulating the equation into a contraction mapping on the Banach space of bounded functions on $h\Z^d$ endowed with the sup-norm. 
The extension to general Lipschitz continuous $m$ is straightforward and therefore the proof omitted.

\begin{Lemma}\l{lemma:existence_lipschitz}
Suppose  Assumptions \ref{assum:mono} and \ref{assum:density},  and the CFL condition \eqref{eq:cfl} hold. If we further assume $f$ is globally Lipschitz continuous in $y$, i.e., 
$$
|f^\a(t,x,y,z,k)-f^\a(t,x,y',z,k)|\le C|y-y'|,\q \fa y,y'\in \R,\, (t,x,z,k)\in \bar{\cQ}_T\t\R^d\t\R,
$$
then the discrete equation \eqref{eq:hjbi_d} admits a unique bounded solution.
\end{Lemma}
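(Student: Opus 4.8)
The plan is to reduce the statement to a single time step and then solve the resulting nonlinear grid equation by policy (Howard) iteration, invoking the earlier uniqueness and a priori results to assemble the whole sequence. Since \eqref{eq:hjbi_d} is a time-marching recursion with $U^0=g$, it suffices to show that for each $n$ and each bounded $\{U^n_i\}_i$ the level-$(n+1)$ equation has a bounded solution $W=\{W_i\}_i$; uniqueness of $W$, and hence of the whole sequence by induction on $n$, is already furnished by Proposition \ref{prop:unique}, and boundedness of $U^{n+1}$ by Proposition \ref{prop:apriori}. Writing $\cP^\a W_i:=W_i-\Delta t A^\a_{r,h}W_i=(1+\Delta t L^\a_i)W_i-\Delta t\sum_{j\neq i}l^{\a,n+1}_{r,h,j,i}W_j$ with $L^\a_i:=\sum_{j\neq i}l^{\a,n+1}_{r,h,j,i}$, the equation reads $\cG_h(W)_i:=\inf_\a\big[\cP^\a W_i-\Delta t f^\a(t_n,x_i,W_i,\cdot,\cdot)-d^\a_i\big]-\Delta t\rho(\zeta(t_n,x_i)-W_i)^+=0$, where the data $d^\a_i$ and the third and fourth arguments of $f^\a$ depend only on the known vector $U^n$. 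First I would record two structural facts: for a fixed mesh, $\cP^\a$ is an $M$-operator on $\ell^\infty(h\Z^d)$ whose row coefficients sum to $1$, so $\cP^\a\mathbf 1=\mathbf 1$ and $(\cP^\a)^{-1}$ exists, is nonnegative, and has $\ell^\infty$-operator norm $1$, using that $L_h:=\sup_{i,\a}L^\a_i<\infty$ since the coefficients of $A^\a_{r,h}$ are bounded uniformly in $i,\a$ by Assumptions \ref{assum:mono} and \ref{assum:density}; and, because $\mu<0$ by Remark \ref{remark:strict_monotone}, both $y\mapsto f^\a(t,x,y,z,k)$ (Lipschitz, constant $C$) and $y\mapsto\rho(\zeta-y)^+$ (Lipschitz, constant $\rho$) are non-increasing, so $-\Delta t$ times either of them is a non-decreasing Lipschitz perturbation of the diagonal of $\cP^\a$.

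The core step is to solve, for a fixed policy field $\beta:\Z^d\to\bA$, the policy-evaluation equation $\cP^\beta W_i-\Delta t f^{\beta(i)}(t_n,x_i,W_i,\cdot,\cdot)-d^{\beta(i)}_i-\Delta t\rho(\zeta(t_n,x_i)-W_i)^+=0$, which node by node is of the form $(1+\Delta t L^\beta_i)W_i+\Delta t\chi^\beta_i(W_i)=b^\beta_i+\Delta t\sum_{j\neq i}l^{\beta,n+1}_{r,h,j,i}W_j$ with $\chi^\beta_i$ non-decreasing and Lipschitz and $b^\beta_i,\chi^\beta_i(0)$ bounded. Inverting the scalar map $y\mapsto(1+\Delta t L^\beta_i)y+\Delta t\chi^\beta_i(y)$ (strictly increasing, slope $\ge 1+\Delta t L^\beta_i$) recasts this as a fixed point $W=\Phi^\beta(W)$ on $\ell^\infty(h\Z^d)$ whose Lipschitz constant is $\sup_i \Delta t L^\beta_i/(1+\Delta t L^\beta_i)\le \Delta t L_h/(1+\Delta t L_h)<1$, uniformly in $\beta$, in $\rho$ and in $\Delta t$ — the point being that the penalty, kept implicit, only strengthens the diagonal and never enters the contraction constant. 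Banach's theorem then gives a unique bounded $W^\beta$. I would then run Howard's iteration: start from $W^{(0)}=M\mathbf 1$ with $M$ large enough (using \eqref{eq:f_growth}) that $\cG_h(W^{(0)})\ge 0$, set $\beta^{(m)}(i)\in\argmin_\a\big[\cP^\a W^{(m)}_i-\Delta t f^\a(t_n,x_i,W^{(m)}_i,\cdot,\cdot)-d^\a_i\big]$ (attained since $\bA$ is compact and the integrand continuous in $\a$), and put $W^{(m+1)}:=W^{\beta^{(m)}}$. Optimality of $\beta^{(m)}$ at $W^{(m)}$ makes $W^{(m)}$ a supersolution of the $\beta^{(m)}$-policy-evaluation equation, so the comparison principle for that semilinear $M$-equation (immediate from the $M$-structure and monotonicity of $\chi^\beta$) yields $W^{(m+1)}\le W^{(m)}$, each $W^{(m)}$ remains a supersolution of $\cG_h$, and all $W^{(m)}$ are bounded below by any subsolution, e.g.\ $-M\mathbf 1$. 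The monotone bounded sequence converges pointwise to some $W^*\in\ell^\infty(h\Z^d)$; passing to the limit ($\cG_h$ involves only a finite local stencil and a compact $\inf_\a$) shows $W^*$ is a supersolution, and the standard argument with a policy optimal at $W^*$ shows it is also a subsolution, so $\cG_h(W^*)=0$, giving the desired bounded $U^{n+1}$.

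The main obstacle — and the reason a plain Banach fixed point on the whole scheme does not work — is that the local operator is treated implicitly and we want well-posedness under the CFL condition \eqref{eq:cfl} only, i.e.\ with no parabolic restriction $\Delta t=O(h^2)$: freezing the neighbouring values produces a Picard map with contraction constant $\sim\Delta t\sup_{i,\a}L^\a_i$, while the $\inf_\a$ obstructs a single global resolvent inversion. Policy iteration circumvents both by inverting the $M$-operator one policy at a time, where the resolvent is genuinely contractive for any mesh, and the potentially large Lipschitz constant $\rho$ of the penalty is neutralised precisely by keeping that term (together with the monotone $y$-dependence of $f$) implicit. An equivalent non-iterative route would be to define the solution directly as $\inf_\beta W^\beta$ over all policy fields $\beta:\Z^d\to\bA$ and verify $\cG_h(\cdot)=0$ by the same comparison and approximate-optimality arguments; I would leave out the remaining routine verifications, which mirror the localized, finite-dimensional treatment in Section \ref{sec:policy}.
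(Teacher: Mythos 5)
Your approach is genuinely different from the paper's. The paper reformulates the whole discrete equation \eqref{eq:hjbi_d}, $\inf_\a$ included, as a contraction mapping on $\ell^\infty(h\Z^d)$ and invokes Banach's theorem, following \cite{dumitrescu2018}; you instead solve per-policy equations by a resolvent-contraction and assemble the solution by Howard iteration. The core observation --- keeping the penalty and the monotone $y$-dependence implicit in the diagonal gives a contraction constant $\Delta t L_h/(1+\Delta t L_h)<1$ uniformly in $\rho$ and $\Delta t$ --- is the same in both. But your assertion that a ``plain Banach fixed point on the whole scheme does not work'' because the $\inf_\a$ ``obstructs a single global resolvent inversion'' is not correct: freezing only the off-diagonal values $V$ and solving at each node the nonlinear scalar equation $\inf_\a[(1+\Delta t L^\a_i)w-\Delta t\sum_{j\neq i}l^{\a,n}_{r,h,j,i}V_j-\Delta t\tilde f^\a(w)-d^\a_i]=0$ works, since the infimand is continuous and strictly increasing in $w$ with slope at least $1$ for every $\a$, and the induced map $V\mapsto w$ contracts with the same constant $\Delta t L_h/(1+\Delta t L_h)$ once one matches, at each node, the $\a$ in the off-diagonal shift with the $\a$ in the slope. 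That is the paper's argument, and it is shorter.

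Within your policy-iteration route there are two concrete gaps. First, the monotonicity of the Howard iterates is reversed: since $W^{(m)}$, $m\ge 1$, solves the $\beta^{(m-1)}$-policy equation with zero residual and $\beta^{(m)}$ achieves the infimum at $W^{(m)}$, one has $\cG_h(W^{(m)})_i\le 0$; hence $W^{(m)}$ is a \emph{sub}solution of both $\cG_h$ and the $\beta^{(m)}$-policy equation, the iterates satisfy $W^{(1)}\le W^{(2)}\le\cdots\le W^{(0)}$, and the limit $W^*$ first comes out as a subsolution. The claims ``$W^{(m+1)}\le W^{(m)}$'' and ``each $W^{(m)}$ remains a supersolution'' are false. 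This is a consistent sign flip, so the skeleton survives, but several intermediate statements are wrong as written. Second, and more substantively, the limit passage is not as automatic as you suggest. On $\ell^\infty(\Z^d)$ pointwise monotone convergence does not give sup-norm convergence, and passing to the limit in the $\beta^{(m)}$-policy equation only re-derives $\cG_h(W^*)\le 0$. To conclude $\cG_h(W^*)\ge 0$ one must in addition show $\cG_h(W^{(m)})_i\to 0$ at each fixed node $i$: write $\cG_h(W^{(m)})_i=\cP^{\beta^{(m)}(i)}(W^{(m)}-W^{(m+1)})_i-\Delta t[\tilde f^{\beta^{(m)}(i)}(W^{(m)}_i)-\tilde f^{\beta^{(m)}(i)}(W^{(m+1)}_i)]$ and use pointwise convergence, the finite per-node stencil, and the uniform bounds on $L^\a_i$ and on the Lipschitz constant of $\tilde f^\a$; then $\cP^\a W^{(m)}_i-\Delta t\tilde f^\a(W^{(m)}_i)-d^\a_i\ge\cG_h(W^{(m)})_i$ for all $\a$, passed to the limit, gives $\cG_h(W^*)\ge 0$. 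This is precisely the step that makes existence non-trivial in the infinite-dimensional setting, and it is absent from your sketch.
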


Now we are ready to demonstrate the existence of solutions to the discrete equation \eqref{eq:hjbi_d} with a general monotone driver. We shall adapt some arguments for monotone backward stochastic difference equations employed in \cite{pu2017}, by approximating \eqref{eq:hjbi_d} with discrete equations with Lipschitz drivers, whose solutions subsequently enable us to construct the solution of \eqref{eq:hjbi_d}.

\begin{Theorem}\l{thm:existence}
Suppose  Assumptions \ref{assum:mono} and \ref{assum:density},  and the CFL condition \eqref{eq:cfl} hold, then the discrete equation \eqref{eq:hjbi_d} admits a unique bounded solution satisfying  the a priori estimate \eqref{eq:apriori}.
\end{Theorem}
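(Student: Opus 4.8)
The plan is to prove existence of a bounded solution to \eqref{eq:hjbi_d} by a monotone approximation argument, using Lemma \ref{lemma:existence_lipschitz}, Proposition \ref{prop:apriori} and the discrete comparison principle (Proposition \ref{prop:unique}). Uniqueness is already given by Proposition \ref{prop:unique}, and the a priori estimate \eqref{eq:apriori} is already established by Proposition \ref{prop:apriori} for any bounded solution, so the only work is existence. Since the timestepping is recursive (given $U^n$, solve for $U^{n+1}$), it suffices to fix $n$, assume $U^n$ is bounded, and solve the stage equation \eqref{eq:hjbi_d} for $U^{n+1}$; then induct on $n$ with $U^0 = g(\cdot)$ bounded by Assumption \ref{assum:mono}.

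First I would construct, for each integer $m\ge 1$, a Lipschitz-in-$y$ truncation and mollification of the driver $f$: replace $f^\a(t,x,y,z,k)$ by $f^\a_m(t,x,y,z,k)$ obtained by (i) truncating the $y$-variable to $[-m,m]$ composed with the natural projection, or more smoothly by convolving $y\mapsto f^\a(t,x,y,z,k)$ with a mollifier of width $1/m$ and then cutting off the growth so that the result is globally Lipschitz in $y$ with some (possibly $m$-dependent) constant. One must preserve the structural properties: boundedness at the origin (use $|f^\a_m(t,x,0,0,0)|\le C$, possibly $C+1$), the one-sided monotonicity \eqref{eq:y_mono} with the same $\mu<0$ (mollification of a $\mu$-monotone function keeps $\mu$-monotonicity; truncation in $y$ at a level outside $[-M,M]$ for $M$ the a priori bound keeps it on the relevant range), monotonicity in $k$, and Lipschitz continuity in $x,z,k$ uniformly in $\a,t,y$. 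The CFL condition \eqref{eq:cfl} does not involve the Lipschitz constant of $f$ in $y$, so it continues to hold for every $f_m$. By Lemma \ref{lemma:existence_lipschitz}, the corresponding discrete stage equation has a unique bounded solution $U^{n+1,m}$, and by Proposition \ref{prop:apriori} (whose proof only uses monotonicity in $y$ with $\mu\le 0$, monotonicity in $k$, the stability of the Lax–Friedrichs flux, and the CFL condition, all retained by $f_m$) each $U^{n+1,m}$ satisfies the bound \eqref{eq:apriori} with $\sup_\a|f^\a_m(\cdot,\cdot,0,0,0)|_0$ in place of $\sup_\a|f^\a(\cdot,\cdot,0,0,0)|_0$; choosing the mollification so this stays $\le C$ (or $C+o(1)$), we get a uniform bound $|U^{n+1,m}|_0\le M$, independent of $m$, where $M=\max\{|\zeta(t_n,\cdot)^+|_0,\ |U^n|_0+\Delta t\, C\}$ up to an arbitrarily small slack. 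Then I would arrange the truncation so that on the ball $\{|y|\le M\}$ one in fact has $f_m = f * \varrho_{1/m}$ (pure mollification, no truncation), so that $f_m\to f$ locally uniformly as $m\to\infty$ by continuity of $f$.

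Next I would pass to the limit $m\to\infty$. The sequence $\{U^{n+1,m}\}_m$ lives in the ball of radius $M$ in $\ell^\infty(h\Z^d)$, which is not compact, so a diagonal/pointwise-limit extraction is needed: extract, along a subsequence, a pointwise limit $U^{n+1}_i=\lim_m U^{n+1,m}_i$ for every $i\in\Z^d$ (by Tychonoff / a diagonal argument over the countable index set), with $|U^{n+1}|_0\le M$. The obstacle is then to show this pointwise limit actually solves \eqref{eq:hjbi_d}: I would fix $i$, write the stage equation $G_{h,m}(t_{n+1},x_i,U^{n+1,m}_i,\{U^{n+1,m}_a, U^n\}_{a\ne i})=0$, and take $m\to\infty$. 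The finite-sum terms $A^\a_{r,h}$, $K^\a_{r,h}$, $B^\a_{r,h}$ involve only finitely many indices (boundedness of $\eta$), so they converge under pointwise convergence; the nonlinearity $U^{n+1,m}_i - U^n_i - \Delta t\, \tilde f^\a_m(\cdots)$ converges because $f_m\to f$ locally uniformly and all arguments stay in a fixed compact set; and the infimum over the compact set $\bA$ passes to the limit because the family of maps $\a\mapsto(\cdots)$ is equicontinuous in $\a$ (continuity of the coefficients in $\a$, compactness of $\bA$) and converges uniformly in $\a$. Hence $G_h(t_{n+1},x_i,U^{n+1}_i,\{U^{n+1}_a,U^n\}_{a\ne i})=0$ for every $i$, i.e. $U^{n+1}$ is a bounded solution; by Proposition \ref{prop:unique} it is the unique one and the full sequence converges to it, and by Proposition \ref{prop:apriori} it satisfies \eqref{eq:apriori}. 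Completing the induction over $n$ finishes the proof.

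I expect the main obstacle to be the construction of the Lipschitz approximations $f_m$ that simultaneously (a) are globally Lipschitz in $y$ so that Lemma \ref{lemma:existence_lipschitz} applies, (b) retain $\mu$-monotonicity in $y$, monotonicity in $k$, and the uniform Lipschitz bounds in $x,z,k$, (c) keep $|f^\a_m(\cdot,\cdot,0,0,0)|$ controlled so the a priori bound \eqref{eq:apriori} is uniform in $m$, and (d) agree with the genuine mollification $f*\varrho_{1/m}$ on the relevant bounded range of $y$ so that $f_m\to f$ locally uniformly. Getting all four at once — especially reconciling "truncate the growth in $y$" with "do not disturb the one-sided monotonicity" — requires a careful gluing (e.g. mollify first, then interpolate between the mollified $f$ on $|y|\le M+1$ and a linear-in-$y$ extension with slope $\mu$ outside, so that \eqref{eq:y_mono} and \eqref{eq:f_growth}-type control both survive). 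A secondary technical point is justifying the interchange of $\lim_m$ with $\inf_{\a\in\bA}$, which I would handle via the equicontinuity/uniform-convergence argument sketched above rather than any compactness of the solution space.
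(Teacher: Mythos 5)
Your proposal follows essentially the same route as the paper: mollify and truncate the driver in $y$ to obtain globally Lipschitz approximations, invoke Lemma \ref{lemma:existence_lipschitz} for each approximation, use the $m$-uniform a priori bound of Proposition \ref{prop:apriori} to extract a pointwise limit over the countable grid, and pass to the limit in the stage equation using locally uniform convergence of the drivers and the interchange of $\lim$ with $\inf_{\a\in\bA}$. Uniqueness and the estimate \eqref{eq:apriori} are indeed already supplied by Propositions \ref{prop:unique} and \ref{prop:apriori}, and your induction over $n$ matches the paper's reduction to a single stage.

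There is one concrete point your construction leaves open, which the paper treats as a separate second step. Lemma \ref{lemma:existence_lipschitz} requires $|f^\a(t,x,y,z,k)-f^\a(t,x,y',z,k)|\le C|y-y'|$ for \emph{all} $(z,k)\in\R^d\t\R$, but the growth bound \eqref{eq:f_growth} only controls $f$ through $|f^\a(t,x,0,0,0)|+\varphi(|y|)+C(|z|+|k|)$, so the Lipschitz constant in $y$ of your mollified--truncated $f_m$ is in general not uniform in $(z,k)$ when $f^\a(t,x,0,z,k)$ is unbounded over $\R^d\t\R$. Your remark that ``all arguments stay in a fixed compact set'' is true of the discrete equation but does not by itself reconcile with the global hypothesis of the lemma. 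The paper fixes this by first proving existence under the extra assumption that $f^\a(t,x,0,z,k)$ is uniformly bounded, and then reducing the general case via the substitution $f_p^\a(t,x,y,z,k)=f^\a(t,x,y,z,k)-f^\a(t,x,0,z,k)+\Pi_p(f^\a(t,x,0,z,k))$, which preserves all structural assumptions and converges locally uniformly. An equivalent repair in your framework would be to pre-compose $f$ with a projection of $(z,k)$ onto a ball large enough to contain all values $\big(\sigma_r^\a(t_n,x_i)^T\Delta U^n_i/2h,\,B^\a_{r,h}U^n_i\big)$, which leaves the discrete equation unchanged and preserves the Lipschitz and monotonicity properties. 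Apart from this omission, the argument is sound and matches the paper's proof.
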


\begin{proof}
The uniqueness and the a priori bound have been established in Proposition \ref{prop:unique} and \ref{prop:apriori}, respectively. We now prove the existence of solution $U^{n+1}$ to \eqref{eq:hjbi_d} with a given $U^n$ in two steps.  

\textbf{Step 1: $f^\a(t,x,0,z,k)$ is uniformly bounded for all $(\a, t,x,z,k)\in \bA\t \bar{\cQ}_T\t\R^d\t\R$.}

For a family of mollifiers $\varrho_m:\R\to (0,\infty)$, which are smooth functions supported in $(-\f{1}{m},\f{1}{m})$ with mass one, i.e. $\int_\R \varrho_m(s)\,ds=1$ for all $m\in \N$, we define 
the regularized drivers:
\bb
f_m^\a(t,x,y,z,k)\coloneqq (f^\a(t,x,\cdot,z,k)\ast \varrho_m)(y),
\ee
which are clearly  monotone in $y$ with $\mu\le 0$ and globally Lipschitz continuous in $x,z,k$ with the same Lipschitz constant as $f$. One can deduce from the uniform boundedness of $f^\a(t,x,0,z,k)$ and the growth condition \eqref{eq:f_growth} that $f^\a_m$ is locally Lipschitz continuous in $y$, uniformly with respect to $(\a, t,x,z,k)$. Therefore, for each $m,p\in \N$, by considering the  truncated drivers $
f^\a_{m,p}(t,x,y,z,k)\coloneqq f^\a_m(t,x,\Pi_p(y),z,k)$ with
$$
\Pi_p(s)=\f{\inf(p,|s|)}{|s|}s, \q s\in \R,
$$
we can construct a family of globally Lipschitz continuous and bounded drivers satisfying
$$
y(f^\a_{m,p}(t,x,y,z,k)-f^\a_{m,p}(t,x,0,z,k))=\f{|y|}{\inf(p,|y|)}\Pi_p(y)(f^\a_{m,p}(t,x,y,z,k)-f^\a_{m,p}(t,x,0,z,k))\le 0.
$$
Therefore, using Lemma \ref{lemma:existence_lipschitz} and following the proof of the a priori estimate \eqref{eq:apriori}, we know for each $m,p\in \N$, there exists a unique solution $U^{m,p}$ (where we omit the superscript  $n+1$ for simplicity) solving \eqref{eq:hjbi_d}  with the numerical flux associated to $f^\a_{m,p}(t,x,y,z,k)$ and satisfying the following estimate:
\begin{align*}
|U^{m,p}|_0&\le  \max\{|\zeta(t_n,\cdot)^+|_0, \, |U^n|_0+\Delta t\sup_{\a\in \bA}|f^{\a}_{m,p}(\cdot,\cdot,0,0,0)|_0\}\\
&=  \max\{|\zeta(t_n,\cdot)^+|_0, \, |U^n|_0+\Delta t\sup_{\a\in \bA}|f^{\a}_{m}(\cdot,\cdot,0,0,0)|_0\}\\
&=  \max\{|\zeta(t_n,\cdot)^+|_0, \, |U^n|_0+\Delta t\sup_{(\a,t,x)\in \bA\t \bar{\cQ}_T}\bigg |\int_{[-\f{1}{m},\f{1}{m}]} f^{\a}(t,x,s,0,0)\varrho_m(-s)\,ds\bigg |\}\\
&\le \max\{|\zeta(t_n,\cdot)^+|_0, \; |U^n|_0+\Delta t\sup_{(\a,t,x,y)\in \bA\t \bar{\cQ}_T\t [-1,1]}|f^{\a}(t,x,y,0,0)|\},
\end{align*}
which is independent of $p$ and $m$. In other words, for large enough $p$, the truncation of the driver $f^\a_m$ has no influence on the solution.
Consequently, we can obtain a family of uniformly bounded functions $\{U_i^m\}_i$ 
such that for each $m\in \N$, $\{U^m_i\}_i$ solves the following regularized equation: 
\bb\l{eq:hjbi_smooth}
\inf_{\a\in \bA}\bigg(\f{U^{m}_i-U^n_i}{\Delta t}-A^{\a}_{r,h} U^{m}_i-K^\a_{r,h}U^n_i-\tilde{f}^{\a}_m(t_n,x_i,U^{m}_i,\Delta U_i^n,B^{\a}_{r,h} U^n_i)\bigg)=0, \q i\in \Z^d.
\ee

Now the uniform boundedness of the sequence of functions $\{U^m\}$ enables us to extract a subsequence, which by a slight abuse of notation is still denoted as $\{U^m\}$, such that for each index $i$, $U^m_i$ converges to some value $U_i$ as $m$ tends to $\infty$. This defines a function  $\{U_i\}_i$ in  $\ell^\infty(\Z^d)$ satisfying the following estimate:
\bb\l{eq:limit}
|U|_0\le \max\{|\zeta(t_n,\cdot)^+|_0, \; |U^n|_0+\Delta t\sup_{(\a,t,x,y)\in \bA\t \bar{\cQ}_T\t [-1,1]}|f^{\a}(t,x,y,0,0)|\}.
\ee

The properties of the mollifier $\varrho_m$ and the continuity of $f$ imply that $f_m^\a(t,x,y,z,k)$ converges to $f^\a(t,x,y,z,k)$ as $m$ tends to infinity, uniformly on any compact subset of $\bA\t\cQ_T\t \R\t \R^d\t\R$. Also for each given $i\in \Z^d$, 
we know the number of terms summed in $A^{\a}_{r,h} U^{m}_i$ is finite uniformly in $\a$. Therefore, we can conclude by using the inequality $|\inf_\bA f-\inf_\bA g|\le \sup_\bA|f-g|$ and passing  $m\to\infty$ in \eqref{eq:hjbi_smooth} that $\{U_i\}$ satisfies the discrete equation \eqref{eq:hjbi_d}. 

\textbf{Step 2: The general case.}

We shall approximate the driver $f$ by the following sequence (different from $f_m^\a$ earlier):
$$
f_p^\a(t,x,y,z,k)=f^\a(t,x,y,z,k)-f^\a(t,x,0,z,k)+\Pi_p(f^\a(t,x,0,z,k)),\q p\in \N,
$$
which clearly fulfils all the assumptions of Step 1, and converges locally uniformly to ${f}^\a(t,x,y,z,k)$. Thus for each $p\in \N$, there exists a function $\{U^p_i\}_i$ satisfying the following discrete equation:
\bb\l{eq:hjbi_trun}
\inf_{\a\in \bA}\bigg(\f{U^{p}_i-U^n_i}{\Delta t}-A^{\a}_{r,h} U^{p}_i-K^\a_{r,h}U^n_i-\tilde{f}^{\a}_p(t_n,x_i,U^{p}_i,\Delta U_i^n,B^{\a}_{r,h} U^n_i)\bigg)=0, \q i\in \Z^d.
\ee
Moreover, one can deduce from \eqref{eq:limit} and the uniform boundedness of $f^\a(t,x,0,0,0)$ in Assumption \ref{assum:mono} that  for large enough $p$, the sequence of functions $\{U^p\}$ is uniformly bounded:
\begin{align*}
|U^p|_0&\le\max\{|\zeta(t_n,\cdot)^+|_0, \; |U^n|_0+\Delta t\sup_{(\a,t,x,y)\in \bA\t \bar{\cQ}_T\t [-1,1]}|{f}^{\a}_p(t,x,y,0,0)|\}\\
&=\max\{|\zeta(t_n,\cdot)^+|_0, \;|U^n|_0+\Delta t\sup_{(\a,t,x,y)\in \bA\t \bar{\cQ}_T\t [-1,1]}|{f}^{\a}(t,x,y,0,0)|\}.
\end{align*}
Then similar arguments as those in Step 1 enable us to extract a subsequence, which converges pointwise to a bounded function $U$.  Then we can pass $p\to \infty$ in \eqref{eq:hjbi_trun} and establish that $U$ solves \eqref{eq:hjbi_d}  for each $i\in \Z^d$, which consequently completes our proof for the existence of solutions.
\end{proof} 

The next result concludes the convergence of the discrete equation.
\begin{Theorem}\l{thm:discrete_conv}
Suppose  Assumptions \ref{assum:mono} and \ref{assum:density} hold and the CFL condition \eqref{eq:cfl} is satisfied, then for any fixed $\rho\ge 0$, the solution to the discrete equation \eqref{eq:hjbi_d} converges to the  solution of \eqref{eq:hjb_penal} uniformly on compact sets as $h\to 0$.
\end{Theorem}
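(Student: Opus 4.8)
The plan is to invoke the Barles--Souganidis framework for the convergence of monotone, stable, and consistent schemes to the unique viscosity solution of \eqref{eq:hjb_penal}, together with the half-relaxed limits technique. First I would collect the three structural ingredients that have already been prepared in this section. Monotonicity of the scheme is exactly Proposition \ref{prop:mono} under the CFL condition \eqref{eq:cfl}. Stability follows from the uniform a priori bound of Proposition \ref{prop:apriori}: iterating \eqref{eq:apriori} over the $N=T/\Delta t$ time steps and using $\sup_\a|f^\a(\cdot,\cdot,0,0,0)|_0\le C$ from Assumption \ref{assum:mono} together with $|g|_0\le C$ yields $\sup_{n}|U^n|_0\le e^{CT}(|g|_0+TC)+|\zeta^+|_0 =: M$, a bound independent of $h$ and $\Delta t$ (hence, by the relation $r=\max(h^{1/\kappa},h)$, independent of $r$). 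Consistency is assembled from the consistency estimates already recorded: \eqref{eq:r_consistent} for the jump-truncation with rate $r^{2-\kappa}$, \eqref{eq:KB_consistent} together with the choice $r=\max(h^{1/\kappa},h)$ so that $h^2\Gamma_1,h^2\Gamma_2 = O(h)$, \eqref{eq:A_consistent} for the local diffusion with rate $h\Gamma_2$, and the Lax--Friedrichs consistency \eqref{eq:f_consistent} with rate $h^2/\Delta t$; plugging a smooth test function $\phi\in C^{1,2}_b(\bar\cQ_T)$ into $G_h$ and Taylor-expanding the implicit time difference $(\phi(t_{n+1},x_i)-\phi(t_n,x_i))/\Delta t = \phi_t + O(\Delta t)$, all these error terms vanish as $h,\Delta t\to 0$, so $G_h[\phi]\to F^\rho_*[\phi]$ (resp.\ $F^{\rho,*}[\phi]$) pointwise, uniformly on compacts, for any choice of mesh sequence respecting \eqref{eq:cfl}.

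Next I would run the half-relaxed limit argument. Define, for the family of bounded solutions $\{U^{h}\}$ of \eqref{eq:hjbi_d} (extended to $\bar\cQ_T$ in the obvious piecewise fashion or identified with their grid values),
$$
\bar u(\x) = \limsup_{\substack{h\to 0\\ \x'\to\x}} U^{h}(\x'),\qquad
\underline u(\x) = \liminf_{\substack{h\to 0\\ \x'\to\x}} U^{h}(\x').
$$
Both are finite by the uniform bound $M$, with $\underline u\le\bar u$; $\bar u$ is upper semicontinuous and $\underline u$ lower semicontinuous. The standard Barles--Souganidis argument — testing at a strict maximum of $\bar u-\phi$, using monotonicity to control the off-node values $\{U^{b+1}_a\}_{(a,b)\ne(i,n)}$ by $\phi$ up to vanishing errors, and invoking consistency — shows $\bar u$ is a viscosity subsolution of \eqref{eq:hjb_penal} and, symmetrically, $\underline u$ is a supersolution; one also checks that the initial condition $U^0_i=g(x_i)$ passes to the limit so that $\bar u(0,\cdot)\le g\le \underline u(0,\cdot)$. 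By the comparison principle for \eqref{eq:hjb_penal} (the Corollary following Theorem \ref{thm:conts}), $\bar u\le \underline u$ on $\bar\cQ_T$, whence $\bar u=\underline u=:u^\rho$ is the unique bounded continuous viscosity solution of \eqref{eq:hjb_penal}, and the equality $\bar u=\underline u$ together with semicontinuity forces $U^{h}\to u^\rho$ locally uniformly. This simultaneously furnishes the constructive existence proof for \eqref{eq:hjb_penal} promised in Remark \ref{remark:soln_penal}, and the uniform-in-$\rho$ bound $M$ (which depends only on $C,T,|\zeta^+|_0$, not on $\rho$) is precisely the uniform bound needed there.

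The one genuinely delicate point — and the reason this is not a verbatim citation of Barles--Souganidis — is the nonlinear, nonlocal term $B^\a_{r,h}U^n_i$ appearing inside $\tilde f^\a$ at the explicit time level, coupled with the fact that $f$ is merely monotone (not Lipschitz) in $y$. For the subsolution property at a test point $\x_0=(t_0,x_0)$, after localizing so that $U^{h}\le\phi$ near $\x_0$ with equality in the limit, I need $B^\a_{r,h}U^n_i \to B^\a\phi(\x_0)$; this uses $m$ nondecreasing and Lipschitz together with the weight bounds $0\le\om_j\le1$, $\sum_j\om_j=1$, $\mathrm{supp}\,\om_j\subset B(x_j,2h)$ and the density estimate \eqref{eq:density}, exactly as in the consistency estimate \eqref{eq:KB_consistent}, so that the discrete nonlocal operator evaluated on the near-maximizing mesh function is squeezed between $B^\a_{r,h}\phi + o(1)$ and the true value. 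The monotonicity of $f$ in $y$ is handled not through the limit itself but through the \emph{discrete} comparison and a priori estimates already proved (Propositions \ref{prop:unique} and \ref{prop:apriori}), so in the limit argument one only needs continuity of $f$ in all variables, which is assumed; the subtlety is thus confined to verifying that the monotone structure lets the off-node terms be dominated by $\phi$-values whose limit is controlled, and that the Lax--Friedrichs artificial-diffusion term $\sum_l(\theta/\lambda)(\Delta^{(l)}_+U-\Delta^{(l)}_-U)/h$ vanishes on smooth test functions by \eqref{eq:f_consistent}. I expect writing out this nonlocal-term control cleanly, uniformly over $\a\in\bA$ (using the equicontinuity in $\a$ from Assumption \ref{assum:mono}(1) and the compactness of $\bA$), to be the main obstacle; everything else is the routine Barles--Souganidis bookkeeping adapted to the semi-implicit/explicit splitting.
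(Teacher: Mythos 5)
Your proposal is correct and follows exactly the same strategy as the paper: stability from the one-step a priori estimate \eqref{eq:apriori} iterated over time steps (the paper invokes Gronwall's lemma), consistency from the recorded truncation error estimates \eqref{eq:r_consistent}, \eqref{eq:KB_consistent}, \eqref{eq:A_consistent}, \eqref{eq:f_consistent}, monotonicity from Proposition \ref{prop:mono}, and then the Barles--Souganidis half-relaxed-limits argument combined with the comparison principle for \eqref{eq:hjb_penal}. The paper dispatches the final step very tersely as ``adapting the standard arguments in \cite{barles1991} to our current context,'' whereas you have usefully spelled out the genuinely non-routine points of that adaptation (the nonlocal $B^\a_{r,h}$ term inside the merely monotone driver, and the semi-implicit/explicit time splitting), all of which are handled correctly.
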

\begin{proof}
It is straightforward to derive from the one-step estimate \eqref{eq:apriori} and Gronwall's lemma that the numerical solutions are  bounded uniformly in $r$, $\Delta t$ and $h$. 
Also it follows immediately from \eqref{eq:r_consistent}, \eqref{eq:KB_consistent}, \eqref{eq:A_consistent} and \eqref{eq:f_consistent} that the scheme is  consistent  with \eqref{eq:hjb_penal} as $r,\Delta t,h\to 0$.
Given the monotonicity of  the scheme as ascertained by Proposition \ref{prop:mono}, 
we can conclude the convergence of the numerical solution by adapting the standard arguments in \cite{barles1991} to our current context.
\end{proof}

\begin{Remark}\l{remark:soln_penal}
The convergence analysis in \cite{barles1991} does not assume the penalized equation \eqref{eq:hjb_penal} admits a solution. In fact, it considers the semicontinuous envelopes of the discrete solutions and demonstrates they are  viscosity solutions to  \eqref{eq:hjb_penal}. Therefore, the convergence result in Theorem \ref{thm:discrete_conv} and the a-priori estimate \eqref{eq:apriori}  subsequently provide us with a constructive proof for the existence of  solutions of \eqref{eq:hjb_penal}, which are bounded uniformly in $\rho$. Similar arguments can be carried out to demonstrate that the obstacle problem admits a bounded viscosity solution.
\end{Remark}

\section{Policy iteration  for the discrete equation}\l{sec:policy}

In this section, we  propose an efficient method for solving the discrete problem based on policy iteration. We shall further demonstrate  local superlinear convergence  by interpreting the scheme as a nonsmooth Newton method.
Since in practice one usually truncates the discrete equation \eqref{eq:hjbi_d} by localizing it onto a chosen bounded computational domain, and  specifying the behaviour of the solution outside the  domain,
we shall consider the following finite-dimensional problem:  for any given $u^n\in \R^M$, we aim to find $u\in \R^M$ such that for each $i\in \cI\coloneqq \{1,\ldots, M\}$,
\begin{align}\l{eq:hjb_f}
0&=\cG_h^{n+1}[u]_i=
\inf_{\a\in \bA}\bigg(\f{u_i-u^n_i}{\Delta t}-A^{\a}_{r,h} u_i-K^\a_{r,h} u^n_i-\tilde{f}^{\a}(t_n,x_i,u_i,\Delta u_i^n,B^{\a}_{r,h} u^n_i)\bigg), 
\end{align}
where 
 $\tilde{f}^\a(t,x,y,z,k)= \bar{f}^\a(t,x,y,z,k)+\rho(\zeta(t,x)-y)^+$ with the Lax-Friedrichs numerical flux $\bar{f}$.
For simplicity,  we shall denote  \eqref{eq:hjb_f} as $\cG[u]=0$ in the sequel.

We remark that the (finite-dimensional) discrete operators $A^{\a}_{r,h}$, $K^\a_{r,h}$ and $B^\a_{r,h}$ in \eqref{eq:hjb_f} and the numerical flux $\bar{f}$ are in general different from those in \eqref{eq:hjb_penal}, where the first and last rows of these discrete operators will need to be modified to take the boundary conditions into consideration. However,  without loss of generality, we can assume that
\bn
\item the operators $A^{\a}_{r,h}$, $K^\a_{r,h}$,  $B^\a_{r,h}$ and $\bar{f}^\a$ are continuous in $\a$, and
\item for each $\a\in \bA$, the matrix $-A^{\a}_{r,h}$ admits positive diagonals, nonpositive off-diagonals and nonnegative row sums,
\en
where the second property
is motivated by truncating the monotone form \eqref{eq:Ah} of the operator  $A^{\a}_{r,h}$. 
Then the well-posedness of \eqref{eq:hjb_f} follows from similar arguments as those in Section \ref{sec:scheme_analysis}.

In order to design an efficient  iterative scheme for solving \eqref{eq:hjb_f}, we  need to impose the following regularity conditions on the nonlinear function $f$:



\begin{Assumption}\l{assum:f_semi}
There exists a  function $\p^o_y f:\bA\t \bar{\cQ}_T\t\R \t\R^d\t \R\mapsto \R$, which is bounded above by some constant $\mu\in \R$, and satisfies 
 the following properties:

For any $(t,x,y)\in \bar{\cQ}\t \R$ and compact subset $\cK$ of  $\bA\t \R^d\t\R$, there exists a neighbourhood $\cU_y$ of $y$ and a constant $C_y>0$, such that 
\bn
\item  the family of functions $\{\p^o_y  f(\cdot, t,x,u,\cdot,\cdot)\}_{u\in\cU_y}$ is equicontinuous and uniformly bounded by the constant $C_y$ with respect to $(\a,z,k)\in \cK$;
\item the following identity holds uniformly with respect to $(\a,z,k)\in \cK$:
\bb\l{eq:semismooth}
\lim_{h\to 0}\f{ f^{\a}(t,x,y+h,z,k)- f^{\a}(t,x,y,z,k)-\p^o_y  f(\a,t,x,y+h,z,k)h}{h}=0.
\ee
\en
\end{Assumption}
\begin{Remark}
An immediate consequence of Assumption \ref{assum:f_semi}
and the continuity of the operators on $\a$ is that 
for any given $u^n\in \R^M$, the nonlinear function $\cG[u]$ defined in \eqref{eq:hjb_f} is locally Lipschitz continuous on $\R^M$, which is crucial for our subsequent analysis.
\end{Remark}
It is clear that Assumption \ref{assum:f_semi} is satisfied if $f$ is continuously differentiable and monotone  in $y$. However, unlike \cite{xu2017}, we do not require $\p^o_y f$ to be continuous in $y$ and  $f^\a$ to be Fr\'{e}chet-differentiable in order to enjoy the property \eqref{eq:semismooth}. 
In fact, \eqref{eq:semismooth} is closely related to  slantly differentiable functions introduced in \cite{chen2000,hintermuller2002}, which contain piecewise  differentiable functions, convex functions and more generally semismooth functions. Since most problems arising from finance are related to \eqref{eq:hjb_f} with semismooth drivers (see e.g.~\cite{epstein1992,karoui1994} and Section \ref{sec:ambiguity} for details),
Assumption \ref{assum:f_semi} applies to a wide range of optimal control and stopping problems   that are of our interest. 
For notational consistency, we shall denote  by $\p^o_y f^\a$  the dependence of $\p^o_y f$ on the controls.

Let $u^n$ be a given solution at the previous discrete time point, $\a\in \bA^M$ and $u\in \R^M$. We introduce the  diagonal matrix $P^\a[u]$ with diagonal entries $\p^o_y f^{\a_i}(t_n,x_i,u_i,\sigma_r^{\a_i}(t_n,x_i)^T\Delta u_i^n/2h,B^{\a_i}_{r,h} u^n_i)$, $i\in\cI$,
which are bounded above by $\mu$ as stated in Assumption \ref{assum:f_semi}.
Moreover, to handle the penalty term $\rho(\zeta-u)^+$, for any given $u=(u_1,\ldots, u_M)^T\in \R^M$, we shall introduce the  diagonal matrix $V^+[u]=\{v_{ij}[u]\}$ with $v_{ii}[u]=-\rho 1_{ \{ \zeta(t_n,x_i)-u_i>0\}}$ for each $i\in \cI$.

With these matrices in hand,  we shall introduce the following mapping 
$\cL^{\a}:\R^M\to \R^{M\t M}$ for any $\a\in \bA^{M}$, which maps any given $u\in\R^M$ into a matrix $\cL^{\a}[u]$, whose $i$-th row is defined as: 
\bb\l{eq:cL}
\cL^{\a}[u]_i\coloneqq (I-\Delta tA^{\a_i}_{r,h})_i -\Delta t (P^\a[u]_i + V^+[u]_i),\q i=1,\ldots,M.
\ee

Now we are ready to present our policy iteration algorithm, which extends the classical Howard algorithm \cite{bokanowski2009,witte2011} to the current nonlinear context.

\begin{Algorithm}\l{algorithm:semismooth}
Set $u^{(0)}=u^n$. Given $u^{(k)}$, $k\ge 0$, the next iterate $u^{(k+1)}$ is computed as follows:

\begin{description}
\item[Policy improvement step.] Compute $\a^{(k+1)}=\{\a^{(k+1)}_i\}_{i=1}^M$ such that for each $i\in \cI$, 
\bb\l{eq:improve}
\a^{(k+1)}_i\in \argmin_{\a\in \bA}\cG[u^{(k)}]_i.
\ee
\item[Policy evaluation step.]  Compute  $u^{(k+1)}\in \R^M$ by solving
\bb\l{eq:eval_l}
\cG[u^{(k)}]+\cL^{(k+1)}[u^{(k)}](u^{(k+1)}-u^{(k)})=0,
\ee
where $\cL^{(k+1)}[u^{(k)}]$ is the matrix \eqref{eq:cL} evaluated at the control $\a^{(k+1)}$ and the iterate $u^{(k)}$. 
\end{description} 
\end{Algorithm}

We now proceed to investigate the  convergence of Algorithm \ref{algorithm:semismooth} by regarding it as a Newton's method to the nonlinear function $\cG$, where $\cL^{(k+1)}[u^{(k)}]$ plays the essential role of  the derivative of $\cG$ at the point $u^{(k)}$. Since in general $\cG$ is not Fr\'{e}chet-differentiable, we shall interpret the derivative in the sense of slant differentiability \cite{chen2000}. Recall that given  two Banach spaces $\cX$ and $\cY$,  a  function $F:\cX\to \cY$ is said to be slantly differentiable in an open set $\cU\subset \cX$ if there exists a family of bounded linear operators $\{\cL[u]\}_{u\in \cU}$ from $\cX$ into $\cY$, called a slanting function for $F$ in $\cU$, such that for all $u\in \cU$ we have
$$
\lim_{|h|_\cX\to 0}|h|_\cX^{-1}|F(u+h)-F(u)-\cL[u+h](h)|_\cY=0.
$$

We start with the convergence analysis of the optimal controls. For each vector $u\in\R^M$, 
we write $\bA_{u,i}=\argmin_{\a\in \bA}\cG[u]_i$, $i=\cI$,  
and define $\bA_u\coloneqq \prod_{i=1}^M\bA_{u,i}$ to be the set of minimizers of $\cG[u]$.
The following result demonstrates the convergence of sets of optimal controls in terms of  the Hausdorff metric, which is defined as 
$$
d_\cH(A,B)\coloneqq \max\bigg\{\sup_{x\in A}\inf_{y\in B}d_\bA(x,y),\,\sup_{x\in B}\inf_{y\in A}d_\bA(x,y) \bigg\},
$$
for any given non-empty subsets $A$ and $B$ of the control set $\bA$ endowed with the metric $d_\bA$.

\begin{Proposition}\l{thm:control_conv}
Suppose Assumptions \ref{assum:mono} and \ref{assum:f_semi} hold.  For any given $u, u'\in \R^M$, we have that
$\max_{i\in \cI}d_\cH(\bA_{u',i},\bA_{u,i})\to 0$ as $u'\to u$.
\end{Proposition}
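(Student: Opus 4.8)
The statement is exactly Hausdorff-continuity at $u$ of the set-valued map $v\mapsto \bA_{v,i}$, and the plan is to prove it by a compactness/subsequence argument exploiting the joint continuity of the minimised objective. Throughout, write $\phi_i(\a,v)$ for the quantity minimised over $\a\in\bA$ in the definition \eqref{eq:hjb_f} of $\cG[v]_i$, so that $\cG[v]_i=\inf_{\a\in\bA}\phi_i(\a,v)$ and $\bA_{v,i}=\{\a\in\bA:\phi_i(\a,v)=\cG[v]_i\}$. Since $\bA$ is compact (Assumption \ref{assum:mono}) and, for fixed $v$, the map $\a\mapsto\phi_i(\a,v)$ is continuous --- using the standing continuity in $\a$ of the discrete operators $A^\a_{r,h},K^\a_{r,h},B^\a_{r,h}$ imposed just before Assumption \ref{assum:f_semi}, together with the continuity of $f$ --- each $\bA_{v,i}$ is a non-empty compact (in particular closed) subset of $\bA$, so every $d_\cH(\bA_{u',i},\bA_{u,i})$ is well defined. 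As $\cI$ is finite it suffices to fix $i$ and show $d_\cH(\bA_{u',i},\bA_{u,i})\to0$ as $u'\to u$; equivalently, that for every $\eps>0$ there is $\delta>0$ such that $|u'-u|<\delta$ forces both $\bA_{u',i}\subseteq\{\a:d_\bA(\a,\bA_{u,i})<\eps\}$ and $\bA_{u,i}\subseteq\{\a:d_\bA(\a,\bA_{u',i})<\eps\}$.

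The two ingredients I would establish first are: (a) joint continuity of $\phi_i$ on $\bA\times\overline{B}(u,1)$. In $\a$ it is continuous by the above; in $v$ it is locally Lipschitz uniformly in $\a$, since $v$ enters only through the affine operator $A^\a_{r,h}$ (finitely many nodal values with bounded coefficients) and through $v_i$ in the Lax--Friedrichs flux --- with $f$ locally Lipschitz in $y$ by Assumption \ref{assum:f_semi} --- and in the penalty $\rho(\zeta-v_i)^+$; this is the local Lipschitz continuity of $\cG$ recorded in the remark following Assumption \ref{assum:f_semi}. Hence $\phi_i$ is uniformly continuous on the compact set $\bA\times\overline{B}(u,1)$, with a modulus $\omega$. (b) Continuity of the value: $|\cG[v]_i-\cG[v']_i|\le\sup_{\a\in\bA}|\phi_i(\a,v)-\phi_i(\a,v')|\le\omega(|v-v'|)$. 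Granting these, the first (``outer'') inclusion follows by contradiction: if it failed there would exist $\eps_0>0$, $u_n\to u$ and $\a_n\in\bA_{u_n,i}$ with $d_\bA(\a_n,\bA_{u,i})\ge\eps_0$; passing to a subsequence with $\a_n\to\a^*$ and letting $n\to\infty$ in the identity $\phi_i(\a_n,u_n)=\cG[u_n]_i$ gives $\phi_i(\a^*,u)=\cG[u]_i$ by (a) and (b), i.e. $\a^*\in\bA_{u,i}$, contradicting $d_\bA(\a^*,\bA_{u,i})\ge\eps_0$. This already yields $\sup_{\a\in\bA_{u',i}}d_\bA(\a,\bA_{u,i})\to0$, which is the one-sided statement actually needed in the convergence analysis of Algorithm \ref{algorithm:semismooth} (taking $u'=u^{(k)}$ and $u$ the limiting iterate).

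I expect the \textbf{second (``inner'') inclusion} $\bA_{u,i}\subseteq\{\a:d_\bA(\a,\bA_{u',i})<\eps\}$ to be the main obstacle. From (a)--(b), any $\b\in\bA_{u,i}$ satisfies $\phi_i(\b,u')\le\cG[u]_i+\omega(|u'-u|)\le\cG[u']_i+2\omega(|u'-u|)$, so $\b$ is only guaranteed to be a $2\omega(|u'-u|)$-approximate minimiser of $\phi_i(\cdot,u')$. Thus the inclusion reduces to a uniform well-posedness property of the family of problems $\inf_\a\phi_i(\cdot,u')$: that the $\eps$-approximate-minimiser sets $\{\a:\phi_i(\a,u')\le\cG[u']_i+\eps\}$ shrink into a $\psi(\eps)$-neighbourhood of $\bA_{u',i}$ with a modulus $\psi$ independent of $u'$ near $u$. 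This does not follow from continuity and compactness alone --- unlike outer semicontinuity, inner semicontinuity of an $\argmin$ correspondence genuinely requires more --- so one must exploit the structure of $\phi_i$: its affine dependence on the finitely many nodal values entering $A^\a_{r,h},K^\a_{r,h},B^\a_{r,h}$, the monotone forms \eqref{eq:Ah} and \eqref{eq:Kh} with uniformly bounded coefficients depending continuously on $\a$ over the compact set $\bA$, and the local equicontinuity of $\p^o_y f$ in Assumption \ref{assum:f_semi}, to produce such a $\psi$. With a $u'$-uniform modulus $\psi$ in hand, a further subsequence/compactness argument identical in spirit to the one above closes the inner inclusion and hence the proof; extracting this uniform modulus is, to my mind, the crux.
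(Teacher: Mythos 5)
Your write-up correctly proves the outer ("first") inclusion $\sup_{\a\in\bA_{u',i}}d_\bA(\a,\bA_{u,i})\to 0$ by the compactness/subsequence argument, and this matches the paper's treatment of that half. Your instinct about the inner inclusion is also right, but you leave it genuinely unproved, and this is not merely a matter of missing a trick: under Assumptions \ref{assum:mono} and \ref{assum:f_semi} alone the inner inclusion can actually \emph{fail}. Take $\sigma^\a\equiv 0$, $b^\a\equiv 0$, $\eta^\a\equiv 0$ (so $A^{\a}_{r,h}=K^{\a}_{r,h}=B^{\a}_{r,h}=0$) and $f^\a(t,x,y,z,k)=-\a^2 y$ on $\bA=[-1,1]$; this satisfies Assumption \ref{assum:mono} with $\mu=0$ and Assumption \ref{assum:f_semi} with $\p^o_y f=-\a^2$. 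Since nothing else in $\cG[v]_i$ then depends on $\a$, one has $\bA_{v,i}=\argmin_{\a\in[-1,1]}(\a^2 v_i)$, which is all of $[-1,1]$ when $v_i=0$ but is $\{0\}$ when $v_i>0$; thus $\sup_{\a\in\bA_{u,i}}d_\bA(\a,\bA_{u',i})=1$ for every $u'$ near $u$ with $u'_i>0$, and the Hausdorff distance does not vanish. This is exactly the failure of lower semicontinuity of $\argmin$ that you predicted. You should also be aware that the paper's own proof of the inner inclusion rests on the step
\begin{align*}
d_\bA(\a^{u,n},\bA_{u,i})\ \ge\ d_\bA(\a^{u,n},\a^{u_n})-d_\bA(\a^{u_n},\bA_{u,i}),
\end{align*}
which is not a valid metric inequality when $\bA_{u,i}$ contains more than one point (with $S=\{0,10\}$, $x=1$, $y=9$ one gets $d(x,S)=1$ while $d(x,y)-d(y,S)=7$), so the paper does not in fact close the gap you diagnosed either. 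The constructive takeaway is the one you already record: only the outer inclusion is used in the slant-differentiability proof of Lemma \ref{eq:lemma:cG}, where the reference control $\a^u_i\in\bA_{u,i}$ may be chosen depending on $h$ and $\a^{u+h}_i$, and outer semicontinuity supplies such a nearby choice. As stated, the two-sided Hausdorff claim needs an extra hypothesis (e.g.\ uniqueness of the minimiser, or a uniform growth/strong-convexity condition on $\a\mapsto\phi_i(\a,v)$ near the argmin) or should be weakened to the one-sided statement that is actually needed downstream.
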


\begin{proof}
Let  $u\in \R^M$ and 
$i=1,\ldots,M$
  be fixed. We first show that  $\sup_{x\in \bA_{u',i}}\inf_{y\in \bA_{u,i}}d_\bA(x,y)\to0 $ as $u'\to u$.
Suppose it does not hold, 
which means there exists $\eps>0$,  $u_n\to u$, and $\a^{u_n}_{i}\in \bA_{u_n,i}$, such that $d_\bA(\a^{u_n}_{i},\bA_{u,i})\ge\eps$ for all $n$,
 then by adapting the arguments for Lemma 3.2 in \cite{bokanowski2009}, we can deduce a contradiction using the local Lipschitz continuity of $\cG$ and the compactness of $\bA$.

It then remains to prove $\sup_{x\in \bA_{u,i}}\inf_{y\in \bA_{u',i}}d_\bA(x,y)\to0 $ as $u'\to u$. Suppose not, then there exists   $\eps>0$,  $u_n\to u$, $\a^{u,n}\in \bA_{u,i}$ and $\a^{u_n}\in  \bA_{u_n,i}$ such that $d_\bA(\a^{u,n},\a^{u_n})\ge \eps$ for all $n$. Note since $u_n\to u$, we can obtain from the first part of this proof that $d_\bA(\a^{u_n},  \bA_{u,i})\to 0$ as $n\to \infty$. Then the triangle inequality
$$
0=\liminf_n d_\bA(\a^{u,n}, \bA_{u,i})\ge \liminf_n \big(d_\bA(\a^{u,n}, \a^{u_n})-d_\bA(\a^{u_n},  \bA_{u,i})\big)\ge \eps,
$$
leads us to a contraction, which enables us to conclude the desired result.
\end{proof}

\begin{Lemma}\l{eq:lemma:cG}
Let $\cG$ as in  \eqref{eq:hjb_f} and suppose Assumptions \ref{assum:mono} and \ref{assum:f_semi} hold. 
Then  $\cG$ is slantly differentiable in $\R^M$ with  slanting function $\cL^{\a}[u]$ defined in \eqref{eq:cL} and $\a\in\bA_u$.

\end{Lemma}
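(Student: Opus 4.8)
The plan is to verify the defining limit of slant differentiability directly: for each fixed $u\in\R^M$ and $\a\in\bA_u$, we must show that
\[
\lim_{|h|\to 0}\frac{|\cG[u+h]-\cG[u]-\cL^{\a_{u+h}}[u+h](h)|}{|h|}=0,
\]
where $\a_{u+h}$ denotes a selection of minimizers in $\bA_{u+h}$. First I would fix a row index $i\in\cI$ and write $\cG[u+h]_i=\inf_{\a\in\bA}\big(\text{linear-in-}u \text{ terms}+\text{nonlinear-in-}u\text{ terms}\big)$. The linear part $(I-\Delta t A^{\a_i}_{r,h})u - \Delta t(\cdots)$ contributes exactly, since $(I-\Delta t A^{\a_i}_{r,h})_i h$ is reproduced by the first block of $\cL^{\a}[u+h]_i$; so the whole argument reduces to the nonlinear terms $-\Delta t\,\bar f^{\a_i}(t_n,x_i,u_i,\ldots)-\Delta t\,\rho(\zeta(t_n,x_i)-u_i)^+$, i.e.\ to slant differentiability of the scalar maps $y\mapsto \bar f^{\a_i}(t_n,x_i,y,\Delta u^n_i,B^{\a_i}_{r,h}u^n)$ (with slanting function $\p^o_y f^{\a_i}$, provided by Assumption~\ref{assum:f_semi} via \eqref{eq:semismooth}, noting the Lax-Friedrichs correction term is linear in $z$ hence irrelevant for the $y$-derivative) and $y\mapsto \rho(\zeta-y)^+$ (with slanting function $-\rho 1_{\{\zeta-y>0\}}$, the standard slanting function for the positive part).

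The second step is to handle the $\inf$ over $\bA$. I would argue as follows: let $\a'\in\bA_{u+h}$ be a minimizer for $\cG[u+h]_i$ and $\a\in\bA_u$ a minimizer for $\cG[u]_i$. Then
\[
\cG[u+h]_i-\cG[u]_i \le \big(\text{value at }\a' \text{ for }u+h\big)-\big(\text{value at }\a'\text{ for }u\big),
\qquad
\cG[u+h]_i-\cG[u]_i \ge \big(\text{value at }\a\text{ for }u+h\big)-\big(\text{value at }\a\text{ for }u\big),
\]
and on the right-hand side of \eqref{eq:eval_l}'s linearization the slanting matrix is evaluated at $\a_{u+h}=\a'$; so the key is to show that the difference between using $\a'$ and $\a$ in the slanting term $\cL^{\a}[u+h](h)$ is $o(|h|)$. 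This is where Proposition~\ref{thm:control_conv} enters: $d_\cH(\bA_{u+h,i},\bA_{u,i})\to 0$ as $h\to 0$, so $\a'_i$ can be taken close to $\bA_{u,i}$; combined with the equicontinuity of $\{\p^o_y f(\cdot,t_n,x_i,v,\cdot,\cdot)\}$ in $(\a,z,k)$ on compact sets (Assumption~\ref{assum:f_semi}(1)) and continuity of the discrete operators in $\a$, the entries of $\cL^{\a'}[u+h]_i$ and $\cL^{\a}[u+h]_i$ differ by a quantity $\to 0$; multiplying by $h$ gives $o(|h|)$.

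The third step assembles the pieces: writing $\Phi^{\a}(y)$ for the scalar nonlinear term at control $\a$, we have by \eqref{eq:semismooth} and the positive-part slanting identity that $\Phi^{\a}(u_i+h_i)-\Phi^{\a}(u_i)-[\p^o_y f^{\a_i}+v_{ii}^+]\big|_{u+h}\,h_i=o(|h_i|)$ uniformly for $\a$ ranging over a compact neighbourhood of $\bA_{u,i}$ (uniformity is legitimate since the family $\{\p^o_y f(\cdot,t_n,x_i,v,\cdot,\cdot)\}_{v\in\cU_{u_i}}$ is equicontinuous and uniformly bounded); the off-node perturbations $h_j$, $j\ne i$, enter only linearly through $A^{\a_i}_{r,h}$ and are captured exactly; hence both the upper and lower bounds for $\cG[u+h]_i-\cG[u]_i$ agree with $\cL^{\a_{u+h}}[u+h]_i\,h$ up to $o(|h|)$, and taking the maximum over the finitely many $i\in\cI$ yields the claim.

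\textbf{Main obstacle.} The delicate point is the uniformity of the little-$o$ term in \eqref{eq:semismooth} as the control varies: Assumption~\ref{assum:f_semi} gives \eqref{eq:semismooth} "uniformly with respect to $(\a,z,k)\in\cK$" for a \emph{fixed} $y$, and gives equicontinuity of $\p^o_y f$ over a neighbourhood $\cU_y$. I would need to combine these to get that the Newton-residual $f^{\a}(t,x,y+h,z,k)-f^{\a}(t,x,y,z,k)-\p^o_y f(\a,t,x,y+h,z,k)h$ is $o(|h|)$ uniformly in $(\a,z,k)$ on compacts — which follows by a standard argument writing the residual as an integral/telescoping difference of $\p^o_y f$ along the segment $[y,y+h]$ and using the equicontinuity — but this must be done carefully. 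Coupling this with the moving minimizers $\a_{u+h}$ (controlled via Proposition~\ref{thm:control_conv}) is the heart of the proof; the rest is bookkeeping on the exactly-reproduced linear part.
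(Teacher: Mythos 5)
Your proposal is correct and follows essentially the same route as the paper's proof: a two-sided bound on $\cG[u+h]_i-\cG[u]_i$ obtained by testing with minimizers for $u$ and $u+h$, the semismoothness identity \eqref{eq:semismooth} for the driver together with the standard slanting function $-\rho 1_{\{\zeta-y>0\}}$ for the penalty term, and the control mismatch absorbed via Proposition \ref{thm:control_conv}, the equicontinuity of $\p^o_y f$ and the continuity of the discrete operators in $\a$. The uniformity you flag as the main obstacle is supplied directly by Assumption \ref{assum:f_semi}(2), which asserts \eqref{eq:semismooth} at the fixed base point $y=u_i$ uniformly over $(\a,z,k)$ in the compact set containing $(\sigma_r^{\a}(t_n,x_i)^T\Delta u_i^n/2h,\,B^{\a}_{r,h}u^n_i)_{\a\in\bA}$, so no telescoping argument along the segment $[y,y+h]$ is needed.
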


\begin{proof}
Let $u^n$ and $u$ be fixed. Since $(\sigma_r^{\a}(t_n,x_i)^T\Delta u_i^n/2h, B^{\a}_{r,h}u^n)_{\a\in \bA}$ is contained in a compact subset of $\R^d\t \R$, there exists a neighbourhood $\cU$ of $u$ such that \eqref{eq:semismooth} holds for this compact set. For notational brevity, we will denote 
$f^{\a}(\cdot,\cdot,\cdot,\Delta u_i^n,B_{r,h} u^n_i)=f^{\a}(\cdot,\cdot,\cdot,\sigma_r^{\a}(\cdot,\cdot)^T\Delta u_i^n/2h,B^\a_{r,h} u^n_i)$. A similar notation applies to $\p^o_yf^{\a}(\cdot,\cdot,\cdot,\Delta u_i^n,B_{r,h} u^n_i)$.
 
Let $h\in \R^M$ with $u+h\in \cU$. Then for any $\a^u\in \bA_u$, $\a^{u+h}\in \bA_{u+h}$ and $i\in \cI$,
 we have 
\begin{align}
\cG[u]_i=&\ (I-\Delta tA^{\a^{u}_i}_{r,h}) (u)_i-\Delta t\tilde{f}^{\a^{u}_i}(t_n,x_i,u_i,\Delta u_i^n,B^{\a^{u}_i}_{r,h} u^n_i) -u^n_i-\Delta tK^{\a^{u}_i}_{r,h}u^n_i\nb\\
\ge &\ \cG[u+h]_i-\cL^{\a^{u+h}_i}[u+h](h)_i +\Delta t(A^{\a^{u}_i}_{r,h}-A^{\a^{u+h}_i}_{r,h})(h)_i
\nb\\
&+\Delta t\big[f^{\a^{u}_i}(t_n,x_i,u_i+h_i,\Delta u_i^n,B_{r,h} u^n_i)-f^{\a^{u}_i}(t_n,x_i,u_i,\Delta u_i^n,B_{r,h} u^n_i) \nb\\
&-\p^o_y f^{\a^{u}_i}(t_n,x_i,u_i+h_i,\Delta u_i^n,B_{r,h} u^n_i)h_i\big] \nb\\
&+
\Delta t\big[\p^o_y f^{\a^{u}_i}(t_n,x_i,u_i+h_i,\Delta u_i^n,B_{r,h} u^n_i)
-\p^o_y f^{\a^{u+h}_i}(t_n,x_i,u_i+h_i,\Delta u_i^n,B_{r,h} u^n_i)\big] h_i \nb\\
&+\Delta t\rho \big[(\zeta(t_n,x_i)-u_i-h_i)^+-(\zeta(t_n,x_i)-u_i)^++ 1_{\{\zeta(t_n,x_i)-u_i-h_i>0\}} h_i\big]. \l{eq:max2}
\end{align}
Note that \eqref{eq:max2}   vanishes for small enough $h$, so that  we can conclude from the identity \eqref{eq:semismooth}, the equicontinuity of $\{\p_y^o f\}_{u\in \cU}$ in $(\a,z,k)$, uniform continuity of $A_{r,h}^\a$ in $\a$, and Proposition \ref{thm:control_conv} that
\begin{align*}
\cG[u+h]_i-\cG[u]_i-\cL^{\a^{u+h}}[u+h](h)_i\le o(|h|_0).
\end{align*}

On the other hand, we can start with $\cG[u+h]_i$ and deduce  the corresponding lower  bound:
\begin{align}
&\cG[u+h]_i-\cG[u]_i-\cL^{\a^{u+h}_i}[u+h](h)_i\ge o(|h|_0),\q 
\textnormal{as $|h|_0\to 0$,}
\end{align}
which consequently leads to the desired  slant differentiability of $\cG$.
\end{proof}

The following result concludes the local superlinear convergence of Algorithm \ref{algorithm:semismooth}.
\begin{Theorem}
 Suppose Assumptions \ref{assum:mono} and \ref{assum:f_semi} hold.
  Then for all  $\Delta t$ with $1-\mu\Delta t\ge c_0>0$,   $\cL^\a[u]$ is nonsingular and satisfies $|\cL^\a[u]^{-1}|_0\le 1/c_0$. Consequently, the iterates $\{u^{(k)}\}$ generated by Algorithm \ref{algorithm:semismooth} converge superlinearly to 
the solution $u^*$ of \eqref{eq:hjb_f} in a neighbourhood of $u^*$.
\end{Theorem}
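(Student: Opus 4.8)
The plan is to prove the two assertions in turn: the first is purely linear-algebraic, and the second is a nonsmooth Newton convergence argument resting on Lemma \ref{eq:lemma:cG}.

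\textbf{Step 1: invertibility and the inverse bound.} I would fix $\a\in\bA^M$ and $u\in\R^M$, set $B\coloneqq\cL^\a[u]$, and read off from \eqref{eq:cL}, the standing structural assumptions on $-A^{\a_i}_{r,h}$ (positive diagonal, nonpositive off-diagonals, nonnegative row sums), the fact that $P^\a[u]$ is diagonal with entries bounded above by $\mu$, and the fact that $V^+[u]$ is diagonal with nonpositive entries, that $B$ is a $Z$-matrix with $B_{ij}\le 0$ for $j\ne i$ and row sums $\sum_j B_{ij}\ge 1-\mu\Delta t\ge c_0>0$. Then, given $x\in\R^M$, I would pick an index $i$ with $|x_i|=|x|_0$, assume without loss of generality $x_i=|x|_0$ (replacing $x$ by $-x$ otherwise), and use $B_{ij}\le 0$, $x_j\le|x|_0$ to obtain
\[
|Bx|_0 \;\ge\; (Bx)_i \;\ge\; B_{ii}|x|_0+\sum_{j\ne i}B_{ij}|x|_0 \;=\; \Big(\sum_j B_{ij}\Big)|x|_0 \;\ge\; c_0|x|_0 .
\]
This shows $B$ is injective, hence invertible (being square), and that $|B^{-1}|_0\le 1/c_0$, uniformly in $\a$ and $u$. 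In particular the matrix $\cL^{(k+1)}[u^{(k)}]$ in the policy-evaluation step \eqref{eq:eval_l} is invertible, so Algorithm \ref{algorithm:semismooth} is well defined and its update can be written as $u^{(k+1)}=u^{(k)}-\cL^{(k+1)}[u^{(k)}]^{-1}\cG[u^{(k)}]$.

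\textbf{Step 2: local superlinear convergence.} I would then observe that this update is exactly a nonsmooth Newton step: the policy-improvement step \eqref{eq:improve} produces $\a^{(k+1)}\in\bA_{u^{(k)}}$, so by Lemma \ref{eq:lemma:cG} the matrix $\cL^{(k+1)}[u^{(k)}]=\cL^{\a^{(k+1)}}[u^{(k)}]$ is an admissible value of a slanting function of $\cG$ at $u^{(k)}$. Writing $e^{(k)}=u^{(k)}-u^*$ and using $\cG[u^*]=0$, I would rearrange the update into
\[
e^{(k+1)} \;=\; -\,\cL^{(k+1)}[u^{(k)}]^{-1}\Big(\cG[u^*+e^{(k)}]-\cG[u^*]-\cL^{(k+1)}[u^*+e^{(k)}]\big(e^{(k)}\big)\Big),
\]
and combine the uniform bound $|\cL^{(k+1)}[u^{(k)}]^{-1}|_0\le 1/c_0$ with the slant differentiability of $\cG$ at $u^*$ (uniformly over the admissible slanting values, as furnished by the proof of Lemma \ref{eq:lemma:cG}) to get $|e^{(k+1)}|_0\le c_0^{-1}\,\omega(|e^{(k)}|_0)$ for a modulus $\omega$ with $\omega(t)/t\to0$ as $t\to0^+$. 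Choosing a neighbourhood of $u^*$ on which $c_0^{-1}\omega(t)\le\tfrac12 t$, an induction then shows the iterates remain in the neighbourhood, $|e^{(k)}|_0\to0$, and $|e^{(k+1)}|_0/|e^{(k)}|_0\to0$, which is the claimed superlinear convergence.

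The hard part is securing the \emph{uniformity} (in $\a$ and $u$) of the inverse bound in Step 1: it is this uniformity that allows the $o(\cdot)$-estimate coming from slant differentiability at the single limit point $u^*$ to survive the recursion no matter which minimizing control $\a^{(k+1)}$ the policy-improvement step selects at each iteration. Everything else — the $M$-matrix estimate itself, the Newton error identity, and the contraction argument closing the neighbourhood — is routine once Lemma \ref{eq:lemma:cG} is available.
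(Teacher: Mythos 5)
Your proposal is correct and follows essentially the same route as the paper: establish that $\cL^\a[u]$ is strictly diagonally dominant with row sums bounded below by $1-\mu\Delta t\ge c_0$, deduce the uniform bound $|\cL^\a[u]^{-1}|_0\le 1/c_0$, and then run the standard semismooth Newton argument using the slant differentiability from Lemma \ref{eq:lemma:cG}. The only difference is that you prove the two auxiliary facts inline, whereas the paper cites Theorem A of \cite{varga1976} for the inverse bound and \cite[Theorem~3.4]{chen2000} for the local superlinear convergence.
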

\begin{proof}
We can deduce from Assumption \ref{assum:f_semi} and the properties of the matrices $-A^{\a}_{r,h}$ and $-V^+[u]$  that 
for all $\a\in \bA^M$, $u\in \R^M$ and $\Delta t$ with $1-\mu\Delta t>0$,   
$\cL^\a[u]=\{l^{\a,u}_{ij}\}$ defined  in \eqref{eq:cL} is a strictly diagonally dominant matrix whose row sums satisfy: 
$$
\min_{1\le i\le M}\bigg(l^{\a,u}_{ii}-\sum_{j=1,j\not=i}^M|l^{\a,u}_{ij}|\bigg)\ge 1-\Delta t \mu\ge c_0>0, 
$$
from which, along with Theorem A in \cite{varga1976}, we obtain the desired estimate for $|\cL^\a[u]^{-1}|_0$. Then we can directly infer the local superlinear convergence of Algorithm \ref{algorithm:semismooth}   from \cite[Theorem~3.4]{chen2000}.
\end{proof}

We end this section with an important remark about the implementation of the algorithm. Recall that at the policy improvement step \eqref{eq:improve}, one needs to compute the optimal policy at each computational node, which may not  admit any  analytical expression due to complicated nonlinearities of the PDE coefficients or the approximation operators on the control variable.
In these cases, for each $\eps>0$, we can approximate the original coefficients of \eqref{eq:hjb_penal} by suitable functions $\{f^{\a}_\eps,\sigma_\eps^\a, b^\a_\eps,\eta_\eps^\a\}$ which can be easily optimized, such that the following estimate
$$
|f^\a(\cdot,\cdot,\cdot,0,0)-f^{\a}_\eps(\cdot,\cdot,\cdot,0,0)|_{\bar{\cQ}_T\t 
 [-\varphi(|u|)_0,\varphi(|u|)_0]}+|\sigma^\a-\sigma_\eps^\a|_0+|b^\a-b^\a_\eps|_0+|\int_{E} |\eta^\a-\eta_\eps^\a|^2\,\nu (de)\big|^\f{1}{2}_0=O(\eps^2)
$$
holds uniformly in $\a\in \bA$. Then using the continuous dependence result in Theorem \ref{thm:conts}, we can infer that this approximation error is of magnitude $O(\eps)$. 
Commonly used approximating functions can be constructed by discretizing the admissible control set $\bA$, and performing piecewise constant or piecewise linear approximations.

\section{Numerical experiments}\l{sec:numerical}
In this section, we 
demonstrate the effectiveness of the  schemes through numerical experiments. We present two examples, an optimal investment problem with model uncertainty, and a consumption-portfolio allocation problem with non-Lipschitz recursive utilities. Both examples are related to non-standard HJB equations, where the first example contains non-smooth  convex/concave nonlinearities, while the second one involves monotone drivers of polynomial growth.

\subsection{Optimal investment under ambiguity}\l{sec:ambiguity}
We study first an optimal investment problem over a time interval $[0,T]$ in a financial market with a risk-free asset  and a risky asset. 
For our numerical tests, we assume the interest rate is zero, and the price of the risky asset follows the  jump-diffusion process:
\begin{align*}
dS_t=S_{t-}\big(bdt+\sigma dW_t+ (1\wedge |e|) \,\tilde{N}(dt,de)\big),
\end{align*}
where $W$ is a Brownian motion and  $\tilde{N}(dt,de)=N(dt,de)-\nu(de)dt$ is an independent compensated  Poisson process defined on a probability space $(\Om, \{\cF_t\}_{t\in[0,T]}, \bP)$.

An investor with initial wealth $x>0$ at time $t$ can control their wealth process $X^{t,x,\a}$ through a selection of the portion $\a_t$ of  wealth  allocated in the risky asset, and also the duration of the investment via a stopping time $\tau\in [t,T]$, which leads to the following wealth process:
\begin{align*}
dX^{t,x,\a}_s=\a_{s}X^{t,x,\a}_{s-}\big(bds+\sigma dW_s+ (1\wedge |e|) \,\tilde{N}(ds,de)\big), \; s\in[t,\tau];\q X_t=x,
\end{align*}
and  the terminal payoff  $\xi^{t,x,\a}_\tau = g(X_\tau^{t,x,\a})$.

The aim of the agent is to maximize
the expected performance of the investment by taking ambiguity into account in the spirit of \cite{roger2006, karoui2009}. More precisely,  for given parameters $r, R,\kappa_1, \kappa_2>0$, we consider the following value function:
\bb
u_*(t,x)\coloneqq \sup_{\tau\in \cT_t}\sup_{\a\in \cA_t} \cE^t_{\tau,*}[\xi^{t,x,\a}_\tau]=\sup_{\tau\in \cT_t}\sup_{\a\in \cA_t} \inf_{\b\in\cB_t,\mathbb{Q}\in \cM}\ex_\mathbb{Q}\bigg[\exp\big(-\int_{t}^\tau \b_s\,ds\big)\xi^{t,x,\a}_\tau\bigg],
\ee
over all admissible choices of $(\a,\tau)\in \cA_t\t\cT_t $, where 
$\cB_t$ is a class of adapted processes $\b = (\b_s)_{s\in[t,T]}$ valued in $[r, R]$, which represent ambiguous discount rates,
and $\cM$ is a family of absolutely continuous probability measures  with respect to $\bP$ with density
$$ dM^{\pi,\ell}_t=M^{\pi,\ell}_{t-}\bigg(\pi_tdW_t+\int_E \ell_t(e)\,\tilde{N}(de,dt)\bigg); \q M^{\pi,\ell}_0=1,
$$
where $(\pi,\ell)$ are predictable processes satisfying
$|\pi_t|\le \kappa_1 $ and $ 0\le \ell_t(e)\le\kappa_2(1\wedge |e|)$. 
In other words,  the nonlinear expectation  $\cE^{t}_{\tau,*}[\cdot]$  represents the worst-case scenario in a market with uncertainty arising from the discount rate, the Brownian motion, and  the random jump  source (see \cite{roger2006, karoui2009}). Similarly, we  consider the value function associated to the best-case scenario:
\bb
u^*(t,x)\coloneqq \sup_{\tau\in \cT_t}\sup_{\a\in \cA_t} \cE^{t,*}_{\tau}[\xi^{t,x,\a}_\tau]=\sup_{\tau\in \cT_t}\sup_{\a\in \cA_t} \sup_{\b\in\cB_t,\mathbb{Q}\in \cM}\ex_\mathbb{Q}\bigg[\exp\big(-\int_{t}^\tau \b_s\,ds\big)\xi^{t,x,\a}_\tau\bigg].
\ee

Using the dual representation of $\cE^{t,*}_{\tau}[\cdot]$ (resp.~$\cE^{t}_{\tau,*}[\cdot]$),  we can characterize the value function $u^*$ (resp.~$u_*$) as the viscosity solution to the following HJBVI (see   \cite{roger2006, karoui2009, quenez2013,{dumitrescu2016}}): $u(0,x)=g(x)$ for $x\in \R$, and for $(t,x)\in (0,T]\t \R$,
\begin{align}\l{eq:hjbvi_ex1}
\begin{split}
&\min\big\{
u(t,x)-g(x),\inf_{\a\in [0,1]}\big(u_t-L^\a u
-Ru^-+ru^+-\a\kappa_1\sigma |x u_x|-\kappa_2 B^{\a,*} u \big)
\big\}=0,\\
(\textrm{resp.} \, &\min\big\{
u(t,x)-g(x),\inf_{\a\in [0,1]}\big(u_t-L^\a u
-ru^-+Ru^++\a\kappa_1\sigma |x u_x|+\kappa_2 B^\a_* u \big)
\big\}=0,)
\end{split}
\end{align}
where the nonlocal operators $L^\a= A^\a+K^\a$, $B^{\a,*}$ and $B^{\a}_{*}$ satisfy for $\phi\in C^{1,2}([0,T]\t\R)$ that
\begin{align}
\l{eq:k_ex1}
\begin{split}
A^\a\phi(t,x)&= \f{1}{2}\a^2\sigma^2 x^2\phi_{xx}(t,x)+\a b x \phi_x(t,x),
\\
K^\a\phi(t,x)&=\int_{\R\setminus\{0\}}\big(\phi(t,x+\a x\eta(e))-\phi(t,x)-\a x\eta(e) \phi_x(t,x)\big)\,\nu(de),\\
B^{\a,*}\phi(t,x)&=\int_{\R\setminus\{0\}}\big(\phi(t,x+\a x\eta(e))-\phi(t,x)\big)^+ (1\wedge |e|)\,\nu(de),\\
B_*^\a\phi(t,x)&=\int_{\R\setminus\{0\}}\big(\phi(t,x+\a x\eta(e))-\phi(t,x)\big)^- (1\wedge |e|)\,\nu(de).
\end{split}
\end{align}

We now specify the  choice of   parameters for our experiments. For the jump component, we shall consider a symmetric Variance Gamma model (see e.g. \cite{madan1998}) with  a L\'{e}vy measure $\nu(de)={\exp(-\mu|e|)}/{|e|}de$ on $\R$ and intensity $\eta(e)=1\wedge |e|$, while for the initial condition and the obstacle of  the HJBVI we use the exponential
utility function $g(x) =1 -2e^{-2x}$, which implies the solution of \eqref{eq:hjbvi_ex1} changes its sign on the  domain and hence ensures both $u^+$ and $u^-$ in \eqref{eq:hjbvi_ex1} have effects on the solution. 
We consider the value functions at $(T,x_0)$ with the model parameters in Table \ref{table:ex1parameter}.

\pagebreak
\begin{tablehere}
\centering
\begin{tabular}{||c|c|c|c|c|c|c|c|c||}\hline
 $b$ & $\sigma$  & $\mu$ &   $r$ &  $R$ & $\kappa_1$ & $\kappa_2$ &$T$&$x_0$\\ \hline
 0.1 & 0.2&  6&  0.02 & 0.04 &0.2& 0.5& 1 & 1\\\hline
\end{tabular}
\caption{Model parameters for the optimal investment   problem under ambiguity.}
\label{table:ex1parameter}
\end{tablehere}
\bigskip

Now we discuss the implementation details and discretization parameters. The  HJBVIs \eqref{eq:hjbvi_ex1} will be localized to the domain $(0,2)$ with $u(\cdot, x) = g(x)$ for $x\in \R \setminus (0, 2)$. 
Since the singularity of the measure $\nu$ behaves like $\log(r)$, $r>0$, around zero, we can deduce from  the consistency and stability analysis in Section \ref{sec:scheme_analysis} along with the choice of parameters that choosing $r=h$, $\lambda=\Delta t/h=1/5$, and $\theta=1/5$ for the numerical flux will lead us to a consistent and stable scheme. To ensure the monotonicity of the scheme, we discretize the first-order and second-order derivative by  the upwind scheme and the central-difference scheme, respectively, and evaluate the nonlocal operators by the mid-point quadrature formula. We further discretize the control set $\bA=[0,1]$ with a mesh $h_\eps=\f{1}{10}$. and stop the policy iteration, at each timestep if the difference between two consecutive iterates is less than $10^{-10}$.
We remark that on the basis of our experiments, this control discretization mesh  seems to be sufficiently small, since further refinements 
lead to a relative difference less than $10^{-7}$ in the value functions, which is negligible compared to other discretization errors. The effect of the control discretization will be investigated more closely in the next example.

Table \ref{table:ex1_u^*} contains, for different mesh sizes and penalty parameters, the numerical solutions of the value function $u_*$ at the point $(T,x_0)$ and the maximal number of iterations among all time steps. The line $(a)$ clearly indicates the efficiency of our policy iteration scheme, which solves the discrete equation \eqref{eq:hjb_f} at the accuracy $10^{-10}$ with a small number of iterations. Moreover, we can infer from the line $(b)$ that for a fixed penalty parameter $\rho$, the numerical solutions converge monotonically to the exact solution. The  asymptotic magnitude of the approximation error can be deduced from line $(d)$, which is of $O(h)+O(\Delta t)$, and seems to be independent of the size of the penalty parameter $\rho$. We  remark that a similar first-order monotone convergence  can be observed for $u^*$, for which a detailed discussion is omitted.

\bigskip
\begin{tablehere}
\centering
\begin{tabular}{||l|c|c|c|c|c|c||}\hline
 & $h$ & 1/40 & 1/80 & 1/160 & 1/320 &1/640 \\ \hline
$\rho=10^3$  & (a)&    4  &  4  &  4  &  4 & 5    \\
& (b) &  0.7292780  &  0.7292918  &  0.7292987  &  0.7293021 & 0.7293038  \\
& (c) &  & 13.788  & 6.879  & 3.433 & 1.715\\
& (d) &  &   &  2.004  &  2.004  &     2.002\\ \hline
$\rho=16\t 10^3$  & (a)&    4  &  4  &  4  &  5 & 4    \\
& (b) &  0.7293262  &  0.7293271  &  0.7293275  &  0.7293277 & 0.7293278  \\
& (c) &  &  0.8616  & 0.4300 &  0.2146 & 0.1068\\
& (d) &  &   &   2.004 &  2.004  & 2.009    \\ \hline
\end{tabular}
\caption{Numerical solutions of the value function $u_*$ for the optimal investment problem with different mesh sizes and penalty parameters. Shown are: (a) the maximal number of iterations among all time points; (b) the numerical solutions $U_{\rho,h}$ at $(T , x_0 )$; (c) the increments $U_{\rho,h} -U_{\rho,2h}$ (in $10^{-6})$ ; (d) the rate of increments $(U_{\rho,2h} - U_{\rho,4h})/(U_{\rho,h} - U_{\rho,2h})$.}
\label{table:ex1_u^*}
\end{tablehere}
\medskip

We proceed to analyze the impact of computational domains by  performing computations  on the domains $(0,2)$ and $(0,4)$ with $h = 1/640$ and $\rho=64\t 10^3$. It can be observed that this enlarged computational domain has a negligible effect on the numerical solution of value functions (a relative difference of $3.7\cdot 10^{-7}$ for $u^*$ and $2.5\cdot 10^{-12}$ for $u_*$). 
Moreover, the maximal number of iterations remains to be 4 for both $u^*$ and $u_*$, which seems to be independent of the size $M$ of the discrete equation \eqref{eq:hjb_f}.

 Finally  we examine the convergence of value functions  in terms of the penalty parameter $\rho$. Table \ref{table:ex1_penal}
presents the numerical results obtained using the domain $(0,2)$ with a fixed mesh size $h=1/640$ and different penalty parameters.
For both $u^*$ and $u_*$, we can infer from lines (a) and (b)  a monotone convergence of  the numerical solutions, with an approximation error proportional to the reciprocal of the penalty parameter, as asserted in Theorem \ref{thm:mono_rho} and \ref{thm:conv_rho}. Then by performing linear regression of the values in line (a) against the reciprocal of penalty parameters, we can  estimate the constant $C_0$ in \eqref{eq:penalty_conv_smooth} and construct a convergent approximation of the free boundary of \eqref{eq:hjbvi_ex1} as suggested in  \eqref{eq:free_approx}.
Figure \ref{fig:ex1_ctrl} compares
the feedback control strategies for $u^*$ (i.e., the best-case scenario) and $u_*$ (i.e., the worst-case scenario) with $\rho=64\t 10^3$, where the white region represents the sets in which the obstacle is active, and otherwise the colour indicates the value of the optimal control, as presented in the panel on the right. It clearly illustrates that the investor in general behaves more conservatively in the worst-case scenario.

\bigskip
\begin{tablehere}
\centering
\begin{tabular}{||l|c|c|c|c|c||}\hline
  $\rho$& &$10^3$& $4\t10^3$ & $16\t10^3$ & $64\t10^3$ \\ \hline
$u^*$& (a) & 0.75071151 & 0.75071215 & 0.75071231& 0.75071235 \\ 
& (b)  & & 0.639 & 0.159 & 0.040\\
& (c) &  &  & 3.9998& 4.0006 \\ \hline
$u_*$& (a)&   0.72930381 & 0.72932303 & 0.72932783 & 0.72932903      \\ 
& (b) & & 19.215 & 4.802 & 1.201\\
& (c)&  &  & 4.0016& 3.9976 \\ \hline
\end{tabular}
\caption{Numerical results of the value functions $u^*$ and $u_*$ for the optimal investment problem with different penalty parameters. Shown are: (a) the numerical solutions $U_{\rho}$ at $(T , x_0 )$; (b) the increments $U_{\rho} -U_{\rho/4}$ (in $10^{-6}$); (c) the rate of increments $(U_{\rho/4} - U_{\rho/16})/(U_{\rho} - U_{\rho/4})$.}
\label{table:ex1_penal}
\end{tablehere}
\medskip

\medskip
\begin{figurehere}
    \centering
    \includegraphics[width=0.49\columnwidth,height=6cm]{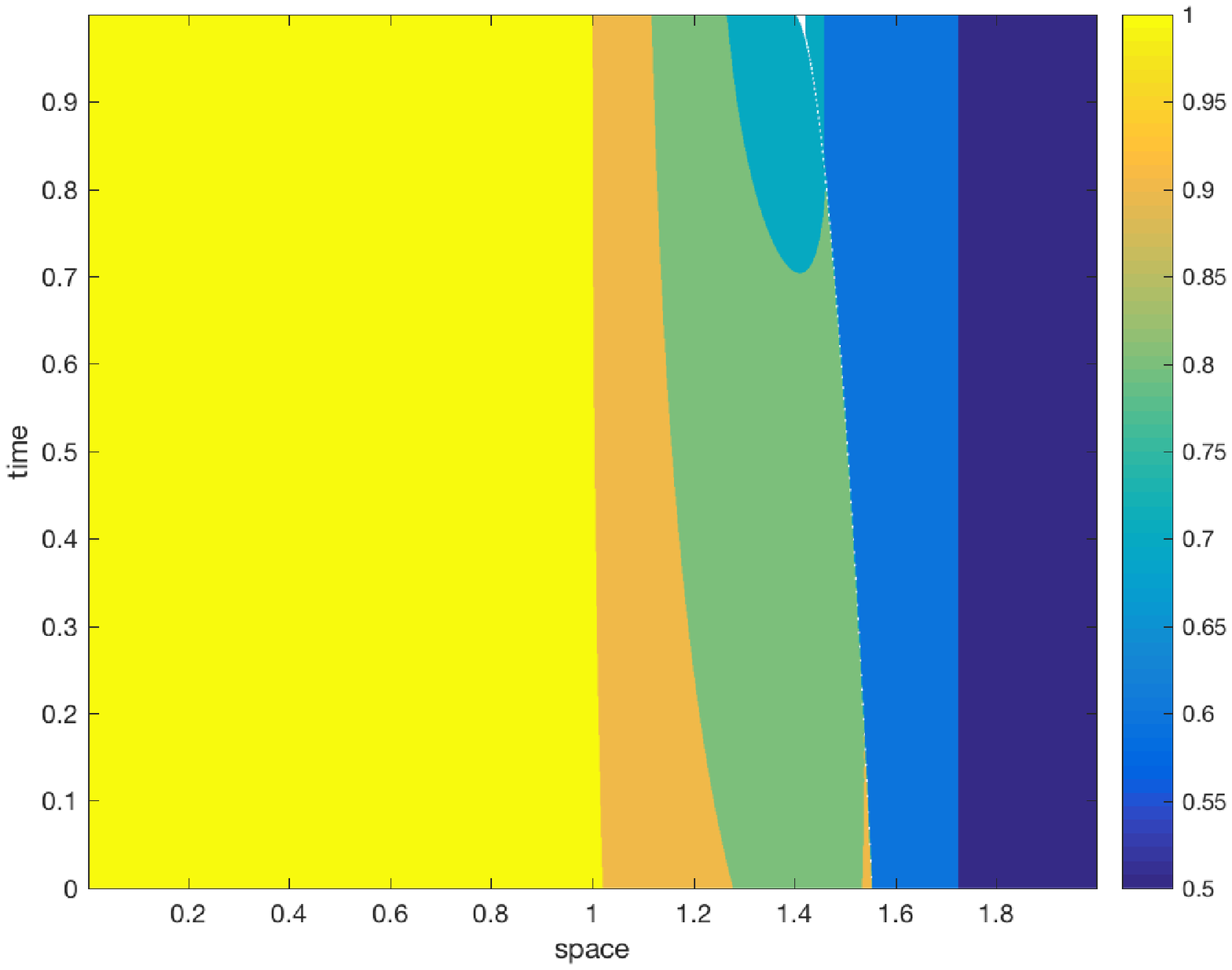}\hfill
    \includegraphics[width=0.49\columnwidth,height=6cm]{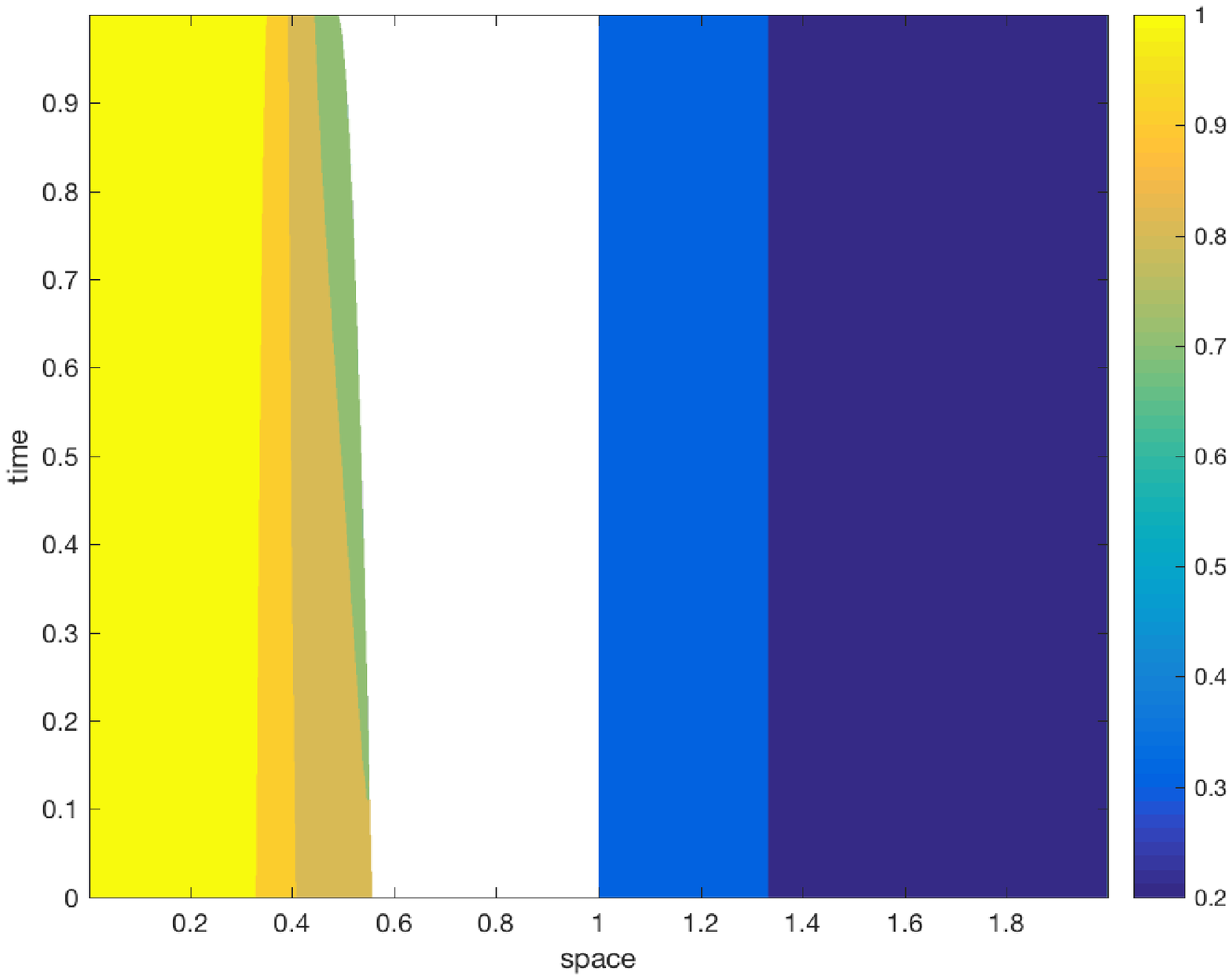}
    \caption{Feedback control strategies with $\rho=16\t 10^3$ for the best-case scenario (left) and the worst-case scenario (right), where the early stopping region is white.}
    \label{fig:ex1_ctrl}
 \end{figurehere}
 \medskip

\subsection{Consumption-portfolio allocation with recursive utility}\l{sec:utility}

As a second example, we shall address a consumption-portfolio maximization problem in terms of recursive utilities, 
which extend the classical additive utilities by allowing one's current well-being to depend on the expected future utilities in a non-risk-neutral way, and play an important role in modern mathematical finance (see e.g.~\cite{kraft2013,pu2017} and references therein). 

For our numerical examples, we shall consider an economy with  a risk-free bond with constant interest rate $r>0$, 
and a risky asset whose dynamics follows a  stochastic volatility model:
\begin{align}
dS_t&=S_t[(r+\lambda v_t)dt+\sqrt{v_t} dW_t],\nb\\
dv_t&=(\vartheta- \kappa v_t)dt+\b\sqrt{v_t}(\rho dW_t+\sqrt{1-\rho^2}d\hat{W}_t),\l{eq:vol}
\end{align}
where $\vartheta, \kappa, \b>0$ are constants, and $W$, $\hat{W}$ are two independent Brownian motions on a filtered probability space $(\Omega,\{\cF_t\}_{t\in[0,T]},\bP)$. 
 
 An agent  controls their wealth by
deciding the portions  invested in stocks and consumed, which implies
the dynamics of the wealth follow the following equation:
$$
dX^{x,\pi,c}_t=X^{x,\pi,c}_t[r+\pi_t\lambda v_t-(1+r)c_t]dt+ \sqrt{v_t} X^{x,\pi,c}_t\pi_t dW_t,\, t\in[0,T]; \q X^{x,\pi,c}_0=x_0,
$$
where $\pi,c:\Omega\t [0,T]\to [0,1]$ are  the  proportion for investment and consumption respectively,  
and $x_0$ is the  initial wealth. 

The preference of the agent between consumption and investment is described by
the well-known (normalized) continuous-time Epstein-Zin utility suggested in \cite{epstein1992}. 
More precisely, suppose the utility from the terminal wealth at the terminal time $T$ is given by $g(X^{x,\pi,c}_T)$,
then the Epstein-Zin recursive utility is defined as
$$
\cE^{c}[g(X^{x,\pi,c}_T)]=Y_t\q \textrm{and} \q Y_t=\ex\bigg[\int_t^Tf(c_sX^{x,\pi,c}_s,Y_s)\,ds+g(X^{x,\pi,c}_T)\mid \cF_t\bigg], \q t\in[0,T],
$$
with the following driver:
\bb\l{eq:epstein}
f(c, y)\coloneqq \f{\delta }{1-\f{1}{\psi}}(1-\gamma)y \bigg[\bigg(\f{c}{((1-\gamma)y)^{\f{1}{1-\gamma}}} \bigg)^{1-\f{1}{\psi}}-1\bigg],
\ee
where we follow the standard parametrization by letting  $\delta>0$ be the rate of time preference and $0< \psi\not= 1$ be the elasticity of intertemporal substitution. 
The objective of the agent is to maximize
the recursive utility over all admissible choices of $\a=(\pi,c)$:
\bb\l{eq:value_ex2}
u(t,x)\coloneqq \sup_{\a\in \cA_t} \cE^{c}[g(X^{x,\pi,c}_T)].
\ee

It has been demonstrated in \cite{kraft2013,pu2017} that for certain empirically important parameters, for instance the coefficients in Table \ref{table:ex2parameter}, which are taken from \cite{kraft2013} and will be used for our numerical test, this driver \eqref{eq:epstein} of the Epstein-Zin utility is non-Lipschitz but monotone in the utility $y$. Moreover, one can identify the value function \eqref{eq:value_ex2} (with a change of time variable) as the  solution to the following HJB equation: $u(0,x,v)=g(x)$ for $(x,v)\in \R^2$, and for $(t,x,v)\in(0,T]\t\R^2$,
\begin{align}\l{eq:hjb_ex2}
\begin{split}
\inf_{(\pi,c)\in \bA}\big(u_t -\f{1}{2}\pi^2x^2vu_{xx}&-\pi \b\rho xvu_{xv}-\f{1}{2}\b^2 vu_{vv}- x[r+\pi\lambda v-(1+r)c]u_x\\
&-(\vartheta-\kappa v)u_v-f(cx,u) \big)=0,
\end{split}
\end{align}
with $\bA=\{(\pi,c)\in [0,1]\t [0,1]\mid \pi+c\le 1\}$.

For the purpose of numerical experiments, we shall take  the negative exponential utility as the initial condition $g(x)=-e^{-x/2}$, and localize the equation on the domain $[0,2]\t[0,0.05]$.
The following homogeneous Neumann  boundary conditions will be imposed as suggested in \cite{ito2009}: 
$$
u_x(t,2,v)=0, \q (t,v)\in [0, T ]\t [0, 0.05];\q u_v(t,x,0.05)=0, \q (t,x)\in [0, T ]\t [0, 2],
$$
while  the equation \eqref{eq:hjb_ex2} itself is set as the boundary condition at $x=0$ and $v=0$.
We remark that based on our experiments with larger computational domains, the error of  the value function caused by this domain truncation appears to be less than $10^{-7}$.

\bigskip
\begin{tablehere}
\centering
\begin{tabular}{||c|c|c|c|c|c|c|c|c|c|c|c||}\hline
 $\gamma$ & $\psi$  & $\delta$ & $r$ &  $\rho$ &  $\lambda$ & $\b$ & $\kappa$ &$\vartheta$&$x_0$&$v_0$& $T$ \\ \hline
 2 & 1.5&  0.08 &  0.05 & -0.5 &0.5 & 0.25 & 5 &0.1125& 1 &0.02 &0.5 \\\hline
\end{tabular}
\caption{Model parameters for the optimal consumption-portfolio allocation problem.} 
\label{table:ex2parameter}
\end{tablehere}
\medskip

The localized HJB equation \eqref{eq:hjb_ex2} is then discretized using the implicit  linear interpolation Semi-Lagrangian scheme (Scheme 2 in \cite{reisinger2017}) with the mesh  size $h_v=h_x=h$  and the time stepsize $\Delta t=4h$, which is monotone and locally first-order accurate. 
We shall further discretize the control set $\bA$ with a mesh $h_\eps$, and for each  time step, terminate policy iteration once the sup-norm of two consecutive iterates is within the threshold  $10^{-6}$.

Table \ref{table:ex2_u} presents the numerical solutions of \eqref{eq:hjb_ex2} at the grid point $(T,x_0,v_0)$ with different spatial mesh size $h$ and a fixed control discretization mesh $h_\eps=1/20$. We can observe from line (a) that our algorithm requires a small number of iterations to obtain an accurate  solution. 
Moreover, lines (b) and (d) indicate the numerical solution converge monotonically with the convergence rate $O(h)+O(\Delta t)$, as the mesh size tends to zero.

\bigskip
\begin{tablehere}
\centering
\begin{tabular}{||c|c|c|c|c|c||}\hline
  $h$  & 1/100 & 1/200 & 1/400 & 1/800 &1/1600 \\ \hline
(a)       &  3  &  4  &  4  &  3 &  3 \\ 
(b)       &  -0.6604205  &  -0.6581355  &  -0.6580512  &  -0.6580101 &    -0.6579897
 \\ 
   (c)  &   & 2.2851 &  0.0843  &  0.0411  & 0.0204 \\

    (d)  &   &  &  27.120  &  2.052  & 2.014
 \\\hline
\end{tabular}
\caption{
Numerical solutions  for the consumption-portfolio allocation problem with different mesh sizes. Shown are: (a) the maximal number of iterations among all time points; (b) the numerical solutions $U_{h}$ at $(T , x_0,v_0)$; (c) the increments $U_{h} -U_{2h}$ (in $10^{-3})$ ; (d) the rate of increments $(U_{2h} - U_{4h})/(U_{h} - U_{2h})$.}
\label{table:ex2_u}
\end{tablehere}
\medskip

We then investigate the  effect of the control discretization by performing computations with a fixed mesh size $h=1/800$ and  different control meshes.
Numerical results are given in Table \ref{table:ex2_ctrl},
from which we can  observe that
the control discretization error decreases rapidly as the meshsize tends to zero,
 and the control mesh $h_\eps=1/20$  already leads to an accurate approximation with a negligible control discretization error.
We further present the optimal investment and consumption allocation corresponding to \eqref{eq:value_ex2} at $t=0$ in Figure  \ref{fig:ex2_ctrl},  where the colour indicates the value of the optimal  feedback control, as shown in the panel on the right. It depicts that the optimal stock allocation in general decreases with respect to the initial wealth, but less sensitive than consumption as observed in \cite{kraft2013}. Moreover, the consumption is insensitive to the volatility, while the investment allocation depends explicitly on the initial states of volatility and wealth.

\bigskip
\begin{tablehere}
\centering
\begin{tabular}{||c|c|c|c|c||}\hline
$h_\eps$ & 1/5 & 1/10 & 1/20 & 1/40 \\ \hline

 (a) &  -0.668535135  &  -0.660102239  &  -0.658010097  &  -0.658005963
 \\ 
(b) &   & 8.4329 &  2.0921  &    0.0041
 \\\hline
\end{tabular}
\caption{Numerical solutions  for the consumption-portfolio allocation problem with different control refinements. Shown are: (a) the numerical solutions $U_{h_\eps}$ at $(T , x_0,v_0)$; (b) the increments $U_{h_\eps} -U_{2h_\eps}$ (in $10^{-3})$.} 
\label{table:ex2_ctrl}
\end{tablehere}
\medskip

\medskip
\begin{figurehere}
    \centering
    \includegraphics[width=0.49\columnwidth,height=6cm]{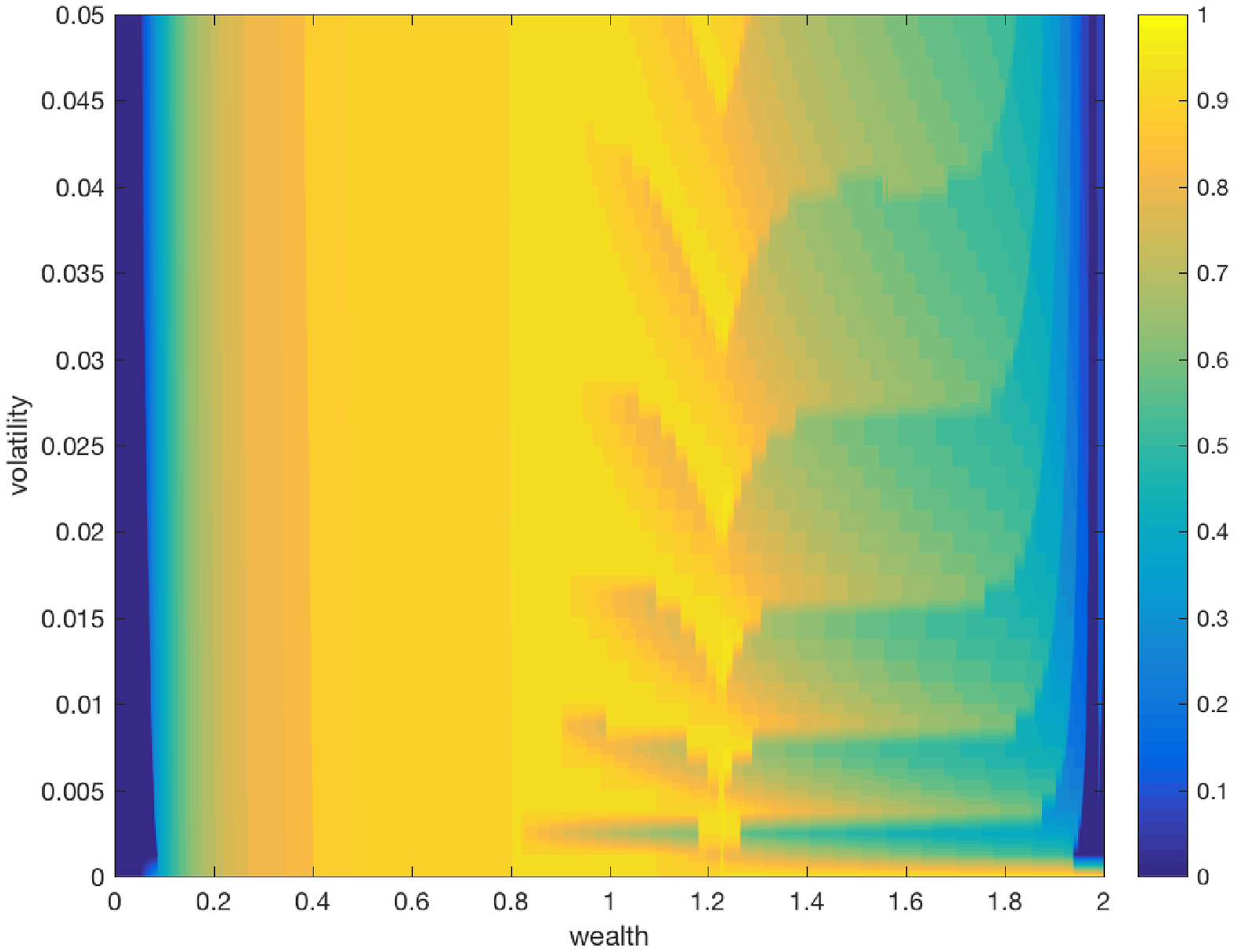}\hfill
    \includegraphics[width=0.49\columnwidth,height=6cm]{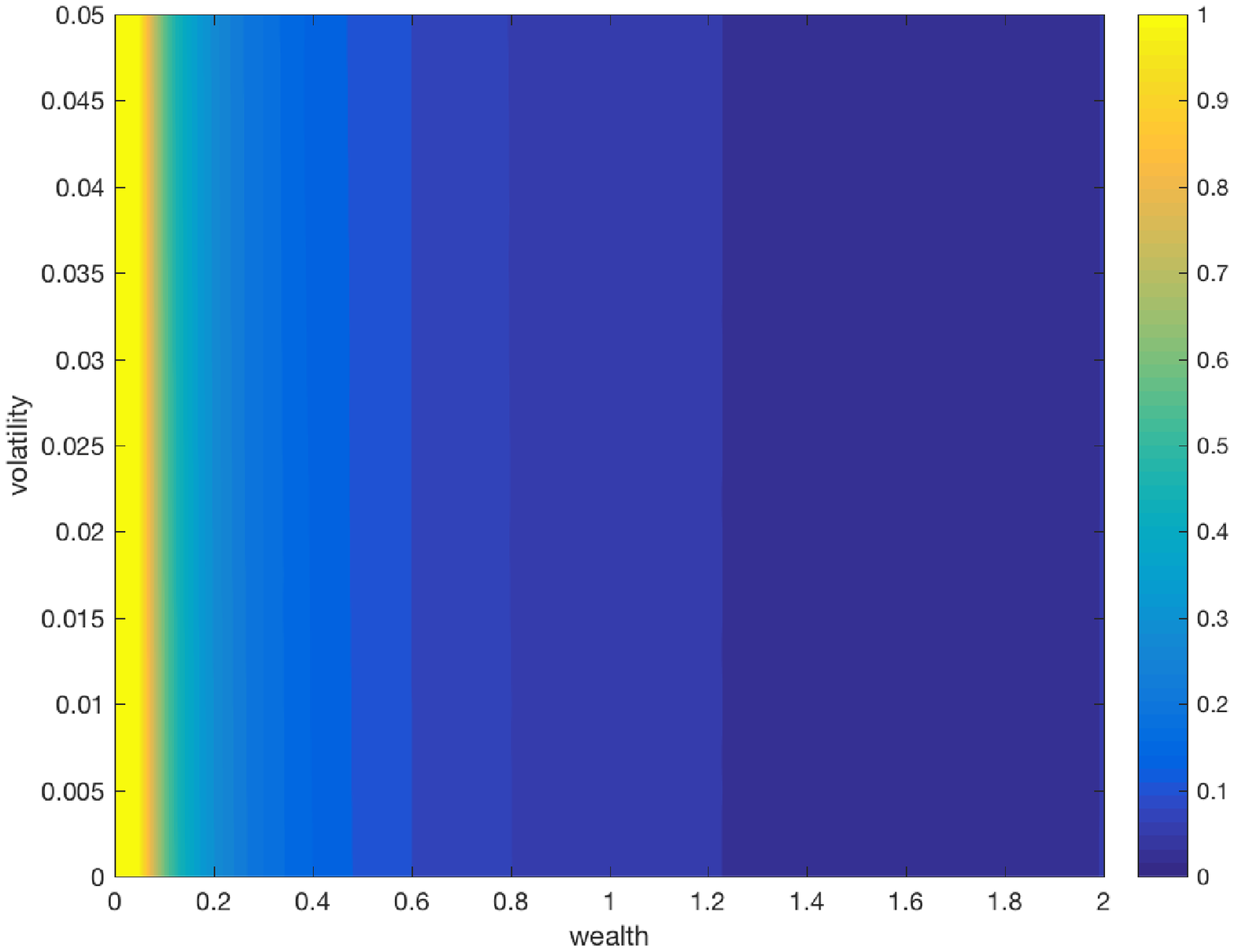}
    \caption{Optimal proportions of wealth for investment  (left) and consumption (right).}
    \label{fig:ex2_ctrl}
 \end{figurehere}
 \medskip

\section{Conclusions}
This paper constructs numerical approximations to the solution and free boundary of HJB variational inequalities with monotone drivers based on the penalty method, monotone schemes, and policy iteration. We prove the convergence of the numerical scheme and illustrate the theoretical results with some numerical examples including an optimal investment under ambiguity  problem and a recursive consumption-portfolio allocation problem.

To the best of our knowledge, this is the first paper which proposes numerical approximations for a HJBVI with a general monotone driver. Natural next steps would be to establish the theoretical convergence rate of  the discretization schemes and to extend this approach to ``double-obstacle" HJBVIs   obtained in \cite{dumitrescu2016mixed} and Hamilton-Jacobi-Bellman-Isaacs equations in \cite{biswas2017}.

\newpage
\appendix
\section{Continuous dependence estimate for penalized equations}\l{sec:appendix}
In this section, we establish continuous dependence estimate for the solutions of the penalized equation \eqref{eq:hjb_penal}, cf.~Theorem \ref{thm:conts}.
\begin{proof}[Proof of Theorem \ref{thm:conts}]
For any given  $\lambda$, $\theta$ and $\eps>0$, we define the following functions:
\begin{align*}
\phi(t,x,y)=\theta e^{\lambda t} |x-y|^2+\eps e^{\lambda t} (|x|^2+|y|^2),\q
\psi(t,x,y)=u_1(t,x)-u_2(t,y)-\phi(t,x,y),
\end{align*}
for all $(t,x,y)\in [0,T]\t \R^d\t \R^d$ and introduce the following quantities:
$$
m^0_{\theta,\eps}=\sup_{\R^d\t \R^d}\psi(0,x,y)^+, \q m_{\theta,\eps}=\sup_{[0,T]\t \R^d\t \R^d}\psi(t,x,y)-m^0_{\theta,\eps}.
$$
The boundedness and semicontinuity  of $u_1$, $u_2$, along with the penalization terms, imply that there exists $(t_0,x_0,y_0)\in [0,T]\t \R^d\t \R^d$, depending on $\theta$ and $\eps$, such that 
$$\psi(t_0,x_0,y_0)=\sup_{[0,T]\t\R^d\t \R^d}\psi(t,x,y)=m_{\theta,\eps}+m^0_{\theta,\eps}.$$
We further introduce the several nonlinear operators, which are essential for the subsequent analysis.
For any given $\a\in \bA$, $\kappa\in (0,1)$, $\phi\in C^{1,2}(\bar{\cQ}_T)$ and bounded semicontinuous function $u$, we define for $i=1,2$ and $\x=(t,x)\in \cQ_T$ that
\begin{align}
K^\a_{\kappa,i}[\phi](\x)&=\int_{|e|\le \kappa}\big(\phi(t,x+\eta^{\a}_i (\x,e))-\phi(\x)-\eta^{\a}_i (\x,e)\cdot \nabla_x\phi(\x)\big)\,\nu_i(de),\\
\tilde{K}^\a_{\kappa,i}[u,p](\x)&=\int_{|e|> \kappa}\big(u(t,x+\eta^{\a}_i (\x,e))-u(\x)-\eta^{\a}_i (\x,e)\cdot p\big)\,\nu_i(de),\\
B^\a_{\kappa,i}[\phi](\x)&=\int_{|e|\le \kappa}m\big(\phi(t,x+\eta^{\a}_i (\x,e))-\phi(\x)\big)\gamma(\x,e)\,\nu_i(de),\\
\tilde{B}^\a_{\kappa,i}[u](\x)&=\int_{|e|> \kappa}m\big(u(t,x+\eta^{\a}_i (\x,e))-u(\x)\big)\gamma(\x,e)\,\nu_i(de).
\end{align}

We  then focus on deriving an  upper bound of $m_{\theta,\eps}$ by first assuming  $m_{\theta,\eps}>0$. This further implies that $t_0>0$ since otherwise we have $m_{\theta,\eps}=\sup_{\R^d\t \R^d}\psi(0,x,y)-m^0_{\theta,\eps}\le 0$. We shall also assume $\mu<0$ in Assumption \ref{assum:mono} (see Remark \ref{remark:strict_monotone}).

Now applying the nonlocal version of Jensen Ishii's lemma \cite[Theorem ~2.2]{jakobsen2005} and using the fact $\inf_\a(a)-\inf_\a(b)\ge \inf_\a(a-b)$, we obtain that for each $\kappa\in (0,1)$, there exist two symmetric matrices $X ,Y\in \R^{d\t d}$ satisfying
\begin{align}
\begin{pmatrix} X & 0\\ 0 &-Y \end{pmatrix} \le 
2\theta e^{\lambda t_0} \begin{pmatrix} I & -I\\ -I &I \end{pmatrix} 
+2\eps e^{\lambda t_0}\begin{pmatrix} I & 0\\ 0 &I \end{pmatrix},
\end{align}
such that the following inequality holds:
\begin{align}
&\lambda \theta e^{\lambda t_0} |x_0-y_0|^2+\lambda \eps e^{\lambda t_0} (|x_0|^2+|y_0|^2)+\inf_{\a\in \bA}\big[\tr(-\sigma^\a_1(t_0,x_0)(\sigma_1^\a (t_0,x_0))^TX 
+\sigma^\a_2(t_0,y_0)(\sigma_2^\a (t_0,y_0))^TY)
\nb \\
&-b^\a_1(t_0,x_0)\nabla_x\phi(t_0,x_0,y_0)-b^\a_2(t_0,y_0)\nabla_y\phi(t_0,x_0,y_0)-l^\a_{K,1}(t_0,x_0)+l^\a_{K,2}(t_0,y_0)\nb\\
&-f_1^\a(t_0,x_0,u_1(t_0,x_0),\sigma^\a_1(t_0,x_0)^T \nabla_x\phi(t_0,x_0,y_0),l^\a_{B,1}(t_0,x_0))\nb\\
&+f_2^\a(t_0,y_0,u_2(t_0,x_0),\sigma^\a_2(t_0,y_0)^T (-\nabla_y\phi(t_0,x_0,y_0)),l^\a_{B,2}(t_0,y_0))\big] \nb\\
&-\rho((\zeta_1(t_0,x_0)-\zeta_2(t_0,y_0))-(u_1(t_0,x_0)-u_2(t_0,y_0)))^+\le 0, \l{eq:ishii}
\end{align}
where the nonlocal terms are defined as:
\begin{align*}
l^\a_{K,1}(t_0,x_0)&\coloneqq K^\a_{\kappa,1}[\phi(\cdot,\cdot,y_0)](t_0,x_0)+\tilde{K}^\a_{\kappa,1}[u_1,\nabla_x \phi(\cdot,\cdot,y_0)](t_0,x_0),\\
l^\a_{K,2}(t_0,y_0)&\coloneqq K^\a_{\kappa,2}[-\phi(\cdot,x_0,\cdot)](t_0,y_0)+\tilde{K}^\a_{\kappa,2}[u_2,-\nabla_y \phi(\cdot,x_0,\cdot)](t_0,y_0),\\
l^\a_{B,1}(t_0,x_0)&\coloneqq B^\a_{\kappa,1}[\phi(\cdot,\cdot,y_0)](t_0,x_0)+\tilde{B}^\a_{\kappa,1}[u_1](t_0,x_0),\\
l^\a_{B,2}(t_0,y_0)&\coloneqq B^\a_{\kappa,2}[-\phi(\cdot,x_0,\cdot)](t_0,y_0)+\tilde{B}^\a_{\kappa,2}[u_2](t_0,y_0).
\end{align*}
In the case that $(\zeta_1(t_0,x_0)-\zeta_2(t_0,y_0))-(u_1(t_0,x_0)-u_2(t_0,y_0))\ge 0$, we can deduce from Lipschitz continuity of $\zeta_1$ and $\zeta_2$ that
\bb\l{eq:obstacle_conts}
m_{\theta,\eps}+m_{\theta,\eps}^0\le u_1(t_0,x_0)-u_2(t_0,y_0)\le |\zeta_1-\zeta_2|_0+C|x_0-y_0|.
\ee
Therefore, in the sequel, we shall  assume  the last term in \eqref{eq:ishii} is equal to $0$.

A straightforward computation gives us that
$$
\nabla_x\phi(t_0,x_0,y_0)=2\theta e^{\lambda t_0} (x_0-y_0)+2\eps e^{\lambda t_0} x_0,\q -\nabla_y\phi(t_0,x_0,y_0)=2\theta e^{\lambda t_0} (x_0-y_0)-2\eps e^{\lambda t_0} y_0,
$$
from which,  together with the Lipschitz continuity of the coefficients, we can deduce the following estimates for the local terms:
\begin{align}\l{eq:local_conts}
\begin{split}
|\tr(-\sigma^\a_1&(t_0,x_0)(\sigma_1^\a (t_0,x_0))^TX 
+\sigma^\a_2(t_0,y_0)(\sigma_2^\a (t_0,y_0))^TY)|\\
\le& 4\theta e^{\lambda t_0}(|\sigma_1^\a-\sigma_2^\a|_0^2+C|x_0-y_0|^2)+2\eps e^{\lambda t_0} C(1+|x_0|^2+|y_0|^2),\\
|b^\a_1(t_0,x_0)&\nabla_x\phi(t_0,x_0,y_0)+b^\a_2(t_0,y_0)\nabla_y\phi(t_0,x_0,y_0)|\\
+&|\sigma^\a_1(t_0,x_0)^T \nabla_x\phi(t_0,x_0,y_0)-\sigma^\a_2(t_0,y_0)^T (-\nabla_y\phi(t_0,x_0,y_0))|\\
\le& 4\theta e^{\lambda t_0}|x_0-y_0|^2+\theta e^{\lambda t_0}(|b^\a_1-b^\a_2|_0^2+|\sigma_1^\a-\sigma_2^\a|_0^2)+\eps e^{\lambda t_0}C(1+|x_0|^2+|y_0|^2).
\end{split}
\end{align}
Moreover, following the same arguments as those for Theorem 4.1 in \cite{jakobsen2005}, we  derive that   
\begin{align}
l^\a_{K,1}&(t_0,x_0)-l^\a_{K,2}(t_0,y_0)\le O(\kappa)+
2\theta e^{\lambda t_0}\bigg[\big|\int_{E} |\eta^\a_1-\eta^\a_2|^2\,(\nu_1\vee\nu_2) (de)\big|_0\nb\\ 
&+\big|\int_{E}(|\eta^\a_1|^2\vee |\eta^\a_2|^2) \,|\nu_1-\nu_2|(de)\big|_0\bigg]
+C\theta e^{\lambda t_0}|x_0-y_0|^2+\eps e^{\lambda t_0}C(1+|x_0|^2+|y_0|^2).\l{eq:K_conts}
\end{align}

We then proceed to estimate the nonlinear terms $f^\a_1$ and $f^\a_2$. For  notational convenience, we denote  $\sigma^\a_1(t_0,x_0)^T \nabla_x\phi(t_0,x_0,y_0)$ and $\sigma^\a_2(t_0,y_0)^T (-\nabla_y\phi(t_0,x_0,y_0))$ by $p_1$ and $p_2$, respectively, and deduce that
\begin{align}
&f_1^\a(t_0,x_0,u_1(t_0,x_0),p_1,l^\a_{B,1}(t_0,x_0))
-f_2^\a(t_0,y_0,u_2(t_0,y_0),p_2,l^\a_{B,2}(t_0,y_0)) \nb\\
\le &f_1^\a(t_0,x_0,u_1(t_0,x_0),p_1,l^\a_{B,1}(t_0,x_0))
-f_1^\a(t_0,x_0,u_1(t_0,x_0),p_1,0) \nb\\
&+f_1^\a(t_0,x_0,u_1(t_0,x_0),p_1,0)
-f_1^\a(t_0,x_0,u_2(t_0,y_0),p_1,0)\nb\\
&+ f_1^\a(t_0,x_0,u_2(t_0,y_0),p_1,0)
-f_1^\a(t_0,y_0,u_2(t_0,y_0),p_2,0)\nb\\
&+ f_1^\a(t_0,y_0,u_2(t_0,y_0),p_2,0)
-f_2^\a(t_0,y_0,u_2(t_0,y_0),p_2,0)\nb\\
&+f_2^\a(t_0,y_0,u_2(t_0,y_0),p_2,0)
-f_2^\a(t_0,y_0,u_2(t_0,y_0),p_2,l^\a_{B,2}(t_0,y_0))\nb
\\
\le& C(l^\a_{B_1})^+ +\mu (m_{\theta,\eps}+m_{\theta,\eps}^0)+C\big[|x_0-y_0|+|\sigma^\a_1(t_0,x_0)^T \nabla_x\phi(t_0,x_0,y_0)-\sigma^\a_2(t_0,y_0)^T (-\nabla_y\phi(t_0,x_0,y_0))|\big] \nb\\
&+ C|\sigma^\a_2(t_0,y_0)^T (-\nabla_y\phi(t_0,x_0,y_0))|+|f_1^\a(\cdot,\cdot,\cdot,0,0)-f_2^\a(\cdot,\cdot,\cdot,0,0)|_{\bar{\cQ}_T\t [-\varphi(|u_2|_0),\varphi(|u_2|_0)]} +C(l^\a_{B_2})^-, \nb
\end{align}
where we have used the fact 
$$
u_1(t_0,x_0)-u_2(t_0,y_0)= \phi(t_0,x_0,y_0)+m_{\theta,\eps}+m_{\theta,\eps}^0\ge m_{\theta,\eps}+m_{\theta,\eps}^0\ge 0,
$$
and the monotonicity of $f$ in $u$. It follows directly from the boundedness of $\sigma^\a$ that 
\bb\l{eq:p2bdd}
|\sigma^\a_2(t_0,y_0)^T (-\nabla_y\phi(t_0,x_0,y_0))|\le 2\theta e^{\lambda t_0}C(|x_0-y_0|^2+1)
+\eps e^{\lambda t_0}C(1+|x_0|^2+|y_0|^2).
\ee

It now remains to bound $(l^\a_{B_1})^+$ and $(l^\a_{B_2})^-$. 
One can obtain from the integrability of the singular measures that  
\begin{align}
|B^\a_{\kappa,1}[\phi(\cdot,\cdot,y_0)](t_0,x_0)|+|B^\a_{\kappa,2}[-\phi(\cdot,x_0,\cdot)](t_0,y_0)|\le O(\kappa).\nb
\end{align}
Moreover, since $\psi$ attains its maximum at $(t_0,x_0,y_0)$, we can deduce 
by using 
$$\psi(t_0,x_0,y_0)\ge \psi(t_0,x_0+\eta^{\a}_1 (t_0,x_0,e),y_0),\q  \psi(t_0,x_0,y_0)\ge \psi(t_0,x_0,y_0+\eta^{\a}_2 (t_0,y_0,e)),
$$
and the property  $-C(-x)^+\le m(x)\le Cx^+$ of the function $m$ that
\begin{align*}
m\big(u_1(t_0,x_0+\eta^{\a}_1 (t_0,x_0,e))-u_1(t_0,x_0)\big)&\le 
\theta  C e^{\lambda t_0}(|x_0-y_0||\eta_1^\a|+|\eta_1^\a|^2)+\eps  C e^{\lambda t_0}(|x_0||\eta_1^\a|+|\eta_1^\a|^2),\\
m\big(u_2(t_0,y_0+\eta^{\a}_2 (t_0,y_0,e))-u_2(t_0,y_0)\big)&\ge 
  -\theta C e^{\lambda t_0}(|x_0-y_0||\eta_2^\a|+|\eta_2^\a|^2)-\eps  C e^{\lambda t_0}(|y_0||\eta_2^\a|+|\eta_2^\a|^2),
\end{align*}
which implies that 
\begin{align*}
\tilde{B}^\a_{\kappa,1}[u_1](t_0,x_0)&=\int_{|e|> \kappa}m\big(u_1(t_0,x_0+\eta^{\a}_1 (t_0,x_0,e))-u_1(t_0,x_0)\big)\gamma(t_0,x_0,e)\,\nu_1(de)\\
&\le  \theta e^{\lambda t_0}C(|x_0-y_0|^2+1)
+\eps e^{\lambda t_0}C(1+|x_0|^2),\\
\tilde{B}^\a_{\kappa,2}[u_1](t_0,y_0)&=\int_{|e|> \kappa}m\big(u_2(t_0,y_0+\eta^{\a}_2 (t_0,y_0,e))-u_2(t_0,y_0)\big)\gamma(t_0,y_0,e)\,\nu_2(de)\\
&\ge - \theta e^{\lambda t_0}C(|x_0-y_0|^2+1)
-\eps e^{\lambda t_0}C(1+|y_0|^2).
\end{align*} 
Consequently, we can bound $C(l^\a_{B_1})^++C(l^\a_{B_2})^-$ by \eqref{eq:p2bdd} with an extra term $O(\kappa)$.

Now we are ready to derive the  upper bound of $m_{\theta,\eps}$. Substituting  
 \eqref{eq:obstacle_conts}, \eqref{eq:local_conts}, \eqref{eq:K_conts}
 and the above estimate of $f^\a_1-f^\a_2$  into \eqref{eq:ishii}, we can obtain that
\begin{align*}
&\lambda \theta e^{\lambda t_0} |x_0-y_0|^2+\lambda \eps e^{\lambda t_0} (|x_0|^2+|y_0|^2)\\
\le & C\theta e^{\lambda t_0}\sup_{\a\in\bA}\big[|\sigma_1^\a-\sigma_2^\a|_0^2+|b^\a_1-b^\a_2|_0^2+\big|\int_{E} |\eta^\a_1-\eta^\a_2|^2\,(\nu_1\vee\nu_2) (de)\big|_0+\big|\int_{E}(|\eta^\a_1|^2\vee |\eta^\a_2|^2) \,|\nu_1-\nu_2|(de)\big|_0\big]\\
&+\sup_{\a\in\bA}|f_1^\a(\cdot,\cdot,\cdot,0,0)-f_2^\a(\cdot,\cdot,\cdot,0,0)|_{\bar{\cQ}_T\t [-\varphi(|u_2|_0),\varphi(|u_2|_0)]}+\mu(  m_{\theta,\eps}+m_{\theta,\eps}^0)+ |\zeta_1-\zeta_2|_0+C|x_0-y_0|\\
&+C\theta e^{\lambda t_0}(|x_0-y_0|^2+1)+\eps e^{\lambda t_0}C(1+|x_0|^2+|y_0|^2)+O(\kappa),
\end{align*}
from some constant $C$ depends only on the coefficients. Then letting $\kappa\to 0$, taking $\lambda=C+1$ and maximizing over $|x_0-y_0|$, we have
\begin{align*}
&-\mu(  m_{\theta,\eps}+m_{\theta,\eps}^0)\le \f{C}{\theta}+C\theta +\sup_{\a\in\bA}|f_1^\a(\cdot,\cdot,\cdot,0,0)-f_2^\a(\cdot,\cdot,\cdot,0,0)|_{\bar{\cQ}_T\t [-\varphi(|u_2|_0),\varphi(|u_2|_0)]}+ |\zeta_1-\zeta_2|_0\\
+ & C\theta \sup_{\a\in\bA}\big[|\sigma_1^\a-\sigma_2^\a|_0^2+|b^\a_1-b^\a_2|_0^2+\big|\int_{E} |\eta^\a_1-\eta^\a_2|^2\,(\nu_1\vee\nu_2) (de)\big|_0+\big|\int_{E}(|\eta^\a_1|^2\vee |\eta^\a_2|^2) \,|\nu_1-\nu_2|(de)\big|_0\big].
\end{align*}

We remark that the above estimate is based on the assumption that $m_{\theta,\eps}>0$. In case that $m_{\theta,\eps}\le 0$, we  obtain from the definition of $m_{\theta,\eps}^0$, the inequality $(x+y)^+\le x^++y^+$ and the Lipchitz continuity of initial conditions that
$$
m_{\theta,\eps}+m_{\theta,\eps}^0\le \sup_{x,y}[|(u_1-u_2)^+|_0+(C|x-y|-\theta |x-y|^2)^+]\le |(u_1-u_2)^+|_0+C/\theta.
$$
Therefore for any $(t,x)\in \bar{\cQ}_T$ and $\eps,\theta>0$, we can deduce from the definition of $m_{\theta,\eps}$ that
\begin{align*}
 &u_1(t,x)-u_2(t,x)\le m_{\theta,\eps}+m_{\theta,\eps}^0+2\eps e^{\lambda t}|x|^2\\
 \le& |(u_1-u_2)^+|_0+|\zeta_1-\zeta_2|_0+\sup_{\a\in\bA}|f_1^\a(\cdot,\cdot,\cdot,0,0)-f_2^\a(\cdot,\cdot,\cdot,0,0)|_{\bar{\cQ}_T\t [-\varphi(|u_2|_0),\varphi(|u_2|_0)]}\\
+& C\theta \sup_{\a\in\bA}\big[|\sigma_1^\a-\sigma_2^\a|_0^2+|b^\a_1-b^\a_2|_0^2+\big|\int_{E} |\eta^\a_1-\eta^\a_2|^2\,(\nu_1\vee\nu_2) (de)\big|_0+\big|\int_{E}(|\eta^\a_1|^2\vee |\eta^\a_2|^2) \,|\nu_1-\nu_2|(de)\big|_0\big]\\
+&\f{C}{\theta}+C\theta +2\eps e^{\lambda t}|x|^2,
\end{align*}
then minimizing the above expression over $\theta$ and passing $\eps\to 0$ lead us to
\begin{align*}
 &u_1(t,x)-u_2(t,x)\le 
 |(u_1-u_2)^+|_0+|\zeta_1-\zeta_2|_0+\sup_{\a\in\bA}|f_1^\a(\cdot,\cdot,\cdot,0,0)-f_2^\a(\cdot,\cdot,\cdot,0,0)|_{\bar{\cQ}_T\t [-\varphi(|u_2|_0),\varphi(|u_2|_0)]}\\
+& C\bigg(\sup_{\a\in\bA}\big[|\sigma_1^\a-\sigma_2^\a|_0^2+|b^\a_1-b^\a_2|_0^2+\big|\int_{E} |\eta^\a_1-\eta^\a_2|^2\,(\nu_1\vee\nu_2) (de)\big|_0+\big|\int_{E}(|\eta^\a_1|^2\vee |\eta^\a_2|^2) \,|\nu_1-\nu_2|(de)\big|_0\big]\bigg)^{\f{1}{4}},
\end{align*}
which enables us to conclude the desired result by using the fact  $\psi(x)=x^{1/4}$ is subadditive.
\end{proof}

\newpage

\end{document}